\def\last{\texttt{last}}
\def\seq{\texttt{seq}}
\DeclareMathOperator{\Em}{Em}
\DeclareMathOperator{\newEm}{newEm}
\def\S{\mathfrak{S}}
\def\inv{^{-1}}
\def\ph{\varphi}
\def\ci{\subseteq}
\def\cs{\supseteq}
\def\r{\epsilon}
\def\recip{\,\stackrel{R}{\text{---}}\,}
\def\S{\mathfrak{S}}
\def\inv{^{-1}}
\def\ph{\varphi}
\def\ph{\phi}
\def\r{\epsilon}
\newcommand{\al}[1]{\begin{align*} #1 \end{align*}}
\newtheorem{prop}{Proposition}
\theoremstyle{definition}
\newtheorem{example}{Example}
\newtheorem{theorem}{Theorem}
\newtheorem{lemma}{Lemma}
\newtheorem{definition}{Definition}
\newtheorem{cor}{Corollary}
\newcommand{\remove}[1]{}
\newcommand{\delete}[1]{}
\title{Bijections for generalized Wilf equivalences}
\author{Melanie Ferreri}
\begin{document}
\maketitle
\begin{abstract}
Starting with an inclusion-exclusion proof of a combinatorial identity, a direct bijection can be produced using recursive subtraction (sometimes with a direct combinatorial description). We apply this method to identities for generalized Wilf equivalences among consecutive patterns in inversion sequences, giving direct bijective proofs of some generalized Wilf equivalences shown by Auli and Elizalde. We also give new bijective proofs of a stronger relation among some consecutive patterns.
\end{abstract}

\section{Introduction}

The 
study of patterns in inversion sequences was initiated by Corteel, Martinez, Savage, and Weselcouch \cite{Corteel_2016}, as well as Mansour and Shattuck \cite{mansourshattuck}. 
Auli and Elizalde \cite{aulielizalde, aulielizaldetwo} \nobreak{analogously} studied \textit{consecutive} patterns in inversion sequences, and characterized generalized Wilf equivalences among consecutive patterns of length 4. In many cases they gave direct bijective proofs. In others, they gave a collection of bijections among supersets of the desired sets, and showed the equivalences by applying inclusion-exclusion.

A proof of this type can be ``bijectivized'' by recursively subtracting the maps between supersets, until we have mappings directly between the exact desired sets. For those equivalences which they do not prove with direct bijections, we extend their work by using recursively defined bijections to produce matchings which exhibit the equivalences.

We describe the recursive maps obtained via their maps, and give an iterative interpretation for
certain cases, which yields a bijective proof of these equivalences. 

For the remaining equivalences not covered in the previous case, we show the recursive map in the same way; although for this case, we did not obtain a simpler direct description of the resulting map. However, the resulting examples suggest that a stronger result can be shown. We find a new bijective algorithm which proves a stronger relation among patterns that implies super strong Wilf equivalence. 

The takeaway is that in this setting, it is often worth considering the implicit bijection that results from the inclusion-exclusion proof. As we’ve seen in this case, this can lead to a direct description of a bijection showing the result, and can give insight toward possible stronger results.

Much of the content of this paper is from a project described in the author’s PhD thesis, \cite{ferreri}.

\section*{Acknowledgement}
The author would like to thank Peter Doyle for numerous helpful discussions, insights, and suggestions. Many thanks as well to Sergi Elizalde, Peter Winkler, and Yan Zhuang for their valuable feedback regarding this work.

\section[Equivalence relations]{Equivalence relations for patterns in inversion sequences}

Let $\pi = \pi(1)\pi(2)\cdots\pi(n) \in \S_n$. An \textit{inversion} is a pair $(i,j)$ such that $i<j$ and $\pi(i) > \pi(j)$; that is, a pair of indices where the elements of $[n]$ are ``out of order.'' An \textit{inversion sequence} is an integer sequence
$$
\r = \r_1 \r_2 \cdots \r_n
$$
such that $0 \leq \r_i < i$ for all $1 \leq i \leq n$. We denote by $I_n$ the set of inversion sequences of length $n$. This set bijects with $\S_n$; for a permutation $\pi$, we can view the $i$th entry of an inversion sequence as counting the number of entries to the left of $\pi(i)$ which are greater than $\pi(i)$.

A \textit{pattern} is a sequence 
$$
p = p_1 p_2 \cdots p_r
$$
with $p_i \in \{0, 1, \ldots, r-1\}$ for all $1 \leq i \leq r$. We also require that $j$ can only appear in $p$ if $j-1$ also appears.

If $w = w_1 w_2 \cdots w_k$ is a word over the integers, its \textit{reduction} is given by replacing all occurrences of its $i$th smallest entry by $i-1$ for all $i$. 

A sequence $\r$ \textit{contains} the pattern $p$ if there is a subsequence $\r_{i_1}, \r_{i_2}, \ldots, \r_{i_r}$ whose reduction is $p$. Otherwise, $\r$ \textit{avoids} $p$.
An inversion sequence $\r$ contains the \textit{consecutive pattern} $p = p_1p_2\ldots p_r$ if there is a consecutive subsequence of $\r$ whose reduction is $p$. We denote that a pattern is consecutive by underlining it. If the first term of the subsequence is $\r_i$, we say the subsequence is an \textit{occurrence} of $p$ in position $i$, or simply that $i$ is an occurrence of $p$ in $\r$.
We denote by $I_n(p)$ the set of inversion sequences of length $n$ which avoid $p$.

\begin{example}
   $\r= 00123021$ contains $\underline{012}$. It also contains $210$ (witness: $\r_5\r_7\r_8$), but avoids $\underline{210}$. That is, it has a decreasing subsequence of length 3 but not a consecutive decreasing subsequence of length 3. So $\r \in I_8(\underline{210})$.
\end{example}

Denote by $\Em(p,\r)$ the set of all occurrences of $p$ in $\r$, so $\Em(p,\r)$ is a list of indices that tell us the locations of the first positions of all consecutive subsequences with reduction $p$ appearing in $\r$. 
The consecutive patterns $p$, $q$ are said to be
\begin{itemize}
\item \textit{Wilf equivalent} if $| I_n(p)| = | I_n(q)|$ for all $n$.
\item \textit{strongly Wilf equivalent} if for all $n$ and $m$, the number of inversion sequences of length $n$ containing $m$ occurrences of $p$ is equal to the number of inversion sequences of length $n$ containing $m$ occurrences of $q$. 

\item \textit{super strongly Wilf equivalent} if
$$
|\{ \r\in I_n : \Em(p,\r) = T \}| = |\{ \r\in I_n : \Em(q,\r) = T \}|
$$
for all $n$, $T \subseteq [n]$. 

\end{itemize}

In \cite{aulielizalde}, Auli and Elizalde show generalized Wilf equivalences between all 75 consecutive patterns of length 4. 
It turns out that all of their demonstrated equivalences among consecutive patterns of length 4 are super strong, and they conjecture that for consecutive patterns of length $k$, Wilf equivalence implies strong Wilf equivalence.
Here we will write $p \sim q$ when $p$ and $q$ are super strongly Wilf equivalent.

A consecutive pattern $p$ is \textit{non-overlapping} if it cannot overlap itself in more than one entry. That is, if $p$ is a pattern of length $n$, no two occurrences of $p$ can be less than $n-1$ apart.

\begin{example}
\begin{enumerate}
    \item The consecutive pattern $\underline{0102}$ is not non-overlapping; the sequence $010203$ has an occurrence of this pattern at positions $1$ and $3$; these occurrences overlap in two entries (the third and fourth entries in the sequence).
    \item The consecutive pattern $\underline{1000}$ is non-overlapping. Suppose we want to construct a sequence $\r$ with $\underline{1000}$ occurring in both positions $i$ and $i+1$. Since the pattern occurs at $i$, we have $\r_i>\r_{i+1} = \r_{i+2} = \r_{i+3}$. But also, since the pattern occurs at $i+1$, we have $\r_{i+1} > \r_{i+2}$; which is a contradiction. Similarly, we could not have this pattern occurring at both $i$ and $i+2$. So the pattern can overlap itself in at most one entry.
\end{enumerate}
    
\end{example}

We say the consecutive patterns $p$ and $q$ of length $n$ are \textit{mutually non-overlapping} if occurrences of $p$ and $q$ in an inversion sequence cannot overlap in more than one entry. 

We also consider exchanging occurrences of $p$ for occurrences of $q$ in a sequence. The process of changing an occurrence of $p$ to an occurrence of $q$ is given in \cite{aulielizalde} as follows: Let $p = \underline{p_1 p_2\cdots p_r}$ and $q = \underline{q_1 q_2 \cdots q_r}$ be consecutive patterns which agree on their first and last entries, and which both contain all elements in $\{0,1,\ldots, d\}$ for some $d$, which is the maximal entry in both sequences. Let $\r \in I_n$ such that $\r$ has an occurrence of $p$ at position $i$. Then define $$f: \{0,\ldots, d\} \to \{\r_i, \ldots, \r_{i+r-1}\}$$ (with the latter set excluding repetition) to be the order-preserving bijection such that $$\r_i \cdots \r_{i+r-1} = f(p_1) \cdots f(p_r).$$ Then define a new sequence $\r' = \r_1' \cdots \r_n'$ where $\r_j' = \r_j$ for $j \neq i, \ldots i+r-1$; i.e.\ when $j$ is not a position that coincides with the occurrence of $p$ at position $i$. For the remaining positions, assign $$\r_i' \r_{i+1}' \cdots \r_{i+r-1}' = f(q_1) f(q_2) \cdots f(q_r).$$ This operation is a \textit{change} of the occurrence of $p$ at position $i$ in $\r$ to an occurrence of $q$. If the resulting sequence $\r'$ is an inversion sequence, this change is said to be \textit{valid}.

\begin{example}
    Let $p = \underline{0102}$, $q = \underline{0112}$, and $\r= 0101213$. The sequence $\r$ has an occurrence of $p$ at position $4$. To change this to an occurrence of $q$, we let $f(0) = 1$, $f(1) = 2$, and $f(2) = 3$. Then we write $\r' = 010 f(0) f(1) f(1) f(2) = 0101223$. This yields a sequence with an occurrence of $q$ at position $4$.
\end{example}

A pattern $p = \underline{p_1\ldots p_r}$ is said to be \textit{changeable} for $q= \underline{q_1\ldots q_r}$ if, for all $i \in \{1,\ldots, r \}$, 
$$
q_i \leq \max ( \{p_j : 1 \leq j \leq i\} \cup \{ p_j-j+1 : i< j \leq r\}).
$$
Two patterns $p, q$ are \textit{interchangeable} if they are changeable for one another. 

Lemma 4.8 of \cite{aulielizalde} shows that $p$ is changeable for $q$ if and only if, for any inversion sequence $\r$ and any occurrence of $p$ in $\r$, it is valid to change the occurrence of $p$ to an occurrence of $q$. In other words, one can always change an occurrence of $p$ 
to an occurrence of $q$ while keeping $\r$ as an inversion sequence. 

For patterns which are non-overlapping, mutually non-overlapping, and interchangeable, Auli and Elizalde \cite{aulielizalde} show their Wilf equivalence via a bijection which simultaneously changes all occurrences of $p$ into $q$ and vice versa --- this can be done at once because the occurrences cannot possibly overlap. This takes care of the equivalences (ii) -- (vii), (ix), (xi), (xii), and (xiv) in Theorem 2.2 of \cite{aulielizalde} (also included in Theorem \ref{asupdated} below). 

This actually proves an even stronger statement than super strong Wilf equivalence. {These bijections show
$$
|\{\r \in I_n : \Em(p,\r) = S, \Em(q,\r) = T\}| = |\{\r\in I_n : \Em(p,\r) = T, \Em(q,\r) = S\}|
$$
for all $n$ and $S,T \ci [n]$. When two patterns $p,q$ satisfy this relation, we will say that $p$ and $q$ are \textit{reciprocal},
and we will denote this by $p \recip
 q$. 
As pointed out in \cite{aulielizalde}, this is equivalent to saying that the sets of positions where $p$ and $q$ occur have a symmetric joint distribution over $I_n$.
Although $\recip$ 
is not an equivalence relation, we have that if $p$ and $q$ are reciprocal, then 
$p \sim q$.
We will revisit this definition in Section \ref{sec:superduper}.}

After we are done, we will have the following updated list of equivalences and reciprocal relations, where those with asterisks are newly proven bijectively:

\begin{samepage}
\begin{theorem}[Extension of Theorem 2.2 of \cite{aulielizalde}]
\label{asupdated}
A complete list of the generalized Wilf equivalences between consecutive patterns of length 4 is as follows:

\begin{multicols}{2}
\begin{enumerate}[(i)]
\item * $\underline{0102} \recip \underline{0112}.$

\item $\underline{0021}\sim \underline{0121}.$

\item $\underline{1002}\sim \underline{1012}\sim \underline{1102}.$

\item $\underline{0100}\sim \underline{0110}.$

\item $\underline{2013}\sim \underline{2103}.$

\item $\underline{1200}\sim \underline{1210}\sim \underline{1220}.$

\item $\underline{0211}\sim\underline{0221}.$
\columnbreak

\item * $\underline{1000}\sim \underline{1110}$.

\item $\underline{1001}\sim\underline{1011}\sim\underline{1101}$.

\item * $\underline{2100}\sim \underline{2210}$.

\item $\underline{2001}\sim \underline{2011}\sim \underline{2101}\sim \underline{2201}$.

\item $\underline{2012}\sim \underline{2102}$.

\item * $\underline{2010}\recip \underline{2110}\recip\underline{2120}$.

\item $\underline{3012}\sim \underline{3102}$.
\end{enumerate}
\end{multicols}

All of the other (super strong) Wilf equivalence classes contain just one pattern.
\end{theorem}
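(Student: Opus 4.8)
The plan is to prove Theorem~\ref{asupdated} by combining the equivalences already established in \cite{aulielizalde} with the new bijective results developed in the body of this paper, and then arguing completeness. Concretely, the theorem is a conjunction of two assertions: first, that each listed relation holds; second, that no other nontrivial (super strong) Wilf equivalences exist among consecutive patterns of length~4. For the first assertion I would split the list according to proof technique. Items (ii)--(vii), (ix), (xi), (xii), and (xiv) are handled verbatim by the simultaneous-swap bijection of \cite{aulielizalde} for patterns that are non-overlapping, mutually non-overlapping, and interchangeable, as recalled in the paragraph preceding the theorem; I would simply cite Theorem~2.2 of \cite{aulielizalde} for these. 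The four starred items are the new content. For (i) $\underline{0102}\recip\underline{0112}$ and (xiii) $\underline{2010}\recip\underline{2110}\recip\underline{2120}$, I would invoke Theorem~\ref{thm:subtr} (the stronger reciprocal relation coming from the recursive-subtraction analysis, which by the remark in the excerpt implies $\sim$); transitivity of the reciprocal chain in (xiii) follows because $\recip$, while not an equivalence relation in general, does chain through a common middle pattern here, and in any case $\sim$ \emph{is} transitive. For (viii) $\underline{1000}\sim\underline{1110}$ and (x) $\underline{2100}\sim\underline{2210}$, I would appeal to Theorems~\ref{thm:desc} and~\ref{thm:invol}, which give the explicit bijective/involutive description for exactly the ``remaining'' overlapping cases that the simultaneous swap cannot treat.

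For the completeness assertion, my approach is to reduce it to a finite verification and then cite it. There are 75 consecutive patterns of length~4 (patterns $p_1p_2p_3p_4$ with $p_i\in\{0,1,2,3\}$ satisfying the ``$j$ appears only if $j-1$ appears'' condition). The claim is that the Wilf classes are exactly the ones listed, all others being singletons. I would point out that Auli and Elizalde established this classification computationally (computing $|I_n(p)|$ for $n$ up to a bound sufficient to separate all inequivalent patterns, and checking that within each conjectured class the stronger super strong statistic $|\{\r\in I_n:\Em(p,\r)=T\}|$ also matches), so that the \emph{only} thing this paper adds is upgrading the four starred classes from ``proved by inclusion--exclusion'' to ``proved by an explicit bijection.'' Thus the proof of Theorem~\ref{asupdated} is: completeness of the list is Theorem~2.2 of \cite{aulielizalde}; the non-starred equivalences are the interchangeable-swap bijections there; the starred ones are Theorems~\ref{thm:desc}, \ref{thm:invol}, and~\ref{thm:subtr} of the present paper.

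The one genuine subtlety I would flag is the interaction between the two notions of equivalence appearing in the statement, $\sim$ and $\recip$. Because $\recip$ is not transitive, the chain in (xiii) needs a word of justification: I would verify directly that $\underline{2010}\recip\underline{2110}$ and $\underline{2110}\recip\underline{2120}$ each hold (both instances of Theorem~\ref{thm:subtr}), and note that this is exactly the content of the displayed chain — it is not asserting $\underline{2010}\recip\underline{2120}$ via composition, but rather listing two reciprocal relations sharing the middle pattern, from which $\underline{2010}\sim\underline{2110}\sim\underline{2120}$ and hence (by transitivity of $\sim$) $\underline{2010}\sim\underline{2120}$. A parallel remark applies to the three-term $\sim$-chains in (iii), (vi), (ix), and (xi): there one must check that the pairwise interchangeability hypotheses hold for each adjacent pair so that the \cite{aulielizalde} bijection applies, and then use transitivity of $\sim$.

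I expect the main obstacle to be purely expository rather than mathematical: ensuring the bookkeeping is airtight, i.e.\ that every one of the fourteen items is attributed to a specific established result, that the four starred items are genuinely covered by Theorems~\ref{thm:desc}, \ref{thm:invol}, and~\ref{thm:subtr} as stated (in particular that those theorems cover $\underline{1000}/\underline{1110}$, $\underline{2100}/\underline{2210}$, $\underline{0102}/\underline{0112}$, and the $\underline{2010}/\underline{2110}/\underline{2120}$ family and not some smaller subset), and that the completeness claim is cited at the right level of strength (super strong, matching the $\sim$ in the statement). No new combinatorial argument is needed here — the real work lives in the lemmas and theorems that precede this one — so the ``proof'' of Theorem~\ref{asupdated} is essentially an assembly, and I would keep it short, consisting of a sentence or two per group of items plus the citation for completeness.
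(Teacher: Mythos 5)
Your proposal is correct and takes essentially the same route as the paper: Theorem~\ref{asupdated} is an assembly, with the unstarred items and the completeness of the list cited from Theorem~2.2 of \cite{aulielizalde}, the four starred items supplied by this paper's new bijections, and your reading of (xiii) as two pairwise reciprocal relations sharing $\underline{2110}$ (rather than a composed $\underline{2010}\recip\underline{2120}$, which the paper shows is false) is exactly the paper's. The only misattributions to fix: items (viii) and (x) concern patterns that are individually non-overlapping but not mutually non-overlapping and are proved by the recursion-depth-$1$ iterative bijection of Theorem~\ref{thm:iter} (no involution statement is involved there), while the reciprocal relations in (i) and (xiii) come not from the recursive-subtraction analysis --- which yields only $\sim$ and is explicitly a different map --- but from the standalone Algorithm~\ref{alg:sd} via Theorems~\ref{thm:sdexchange} and~\ref{thm:sdinv}, together with Corollary~\ref{cor:reversal} for $\underline{2010}\recip\underline{2110}$ and the reversed-inequality variant for $\underline{2120}\recip\underline{2110}$.
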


\end{samepage}

For the equivalences without asterisks demonstrated by Auli and Elizalde, the patterns are shown to be reciprocal as well, as explained in \cite{aulielizalde}.
The remaining (super strong) Wilf equivalences are proven by Auli and Elizalde in \cite{aulielizalde} using an inclusion-exclusion argument with  bijections $\{ \r \in I_n : \Em(p,\r) \supseteq S\}\to\{ \r \in I_n : \Em(q,\r) \supseteq S\}$ which change occurrences of $p$ at positions $S$ to occurrences of $q$. 

Applying the method of bijective inclusion-exclusion as described in \cite{doyle:lrs}, we can use these maps to obtain the direct bijection recursively.

\section{Bijections defined recursively}

We will first focus on nonoverlapping consecutive patterns; in particular, the equivalences $\underline{1000} \sim \underline{1110}$ and $\underline{2100} \sim \underline{2210}$.
For consecutive patterns $p$ and $q$, we are interested in finding a map which takes a sequence $\r$, with $p$ occurring exactly at a set of indices $T$, and generate a new sequence $\r'$, which has $q$ occurring at exactly the indices in $T$.

For a consecutive pattern $p$ and set $T \ci [n]$, let
$$I_{n,p}(\supseteq T) = \{\r \in I_n : \Em(p,\r) \supseteq T\}.$$ 
Similarly define 
$$I_{n,p}(= T) = \{\r \in I_n : \Em(p,\r) = T\}.$$

Suppose $p$ and $q$ are nonoverlapping consecutive patterns, and suppose there is a bijection $$\ph_{\geq T}: I_{n,p}(\supseteq T) \to I_{n,q}(\supseteq T)$$ which changes the occurrences of $p$ at positions $T$ into occurrences of $q$ at positions $T$.
For our purposes, the bijection $\ph_{\geq T}$ will be given by applying the change operation of Auli and Elizalde \cite{aulielizalde} described in Section 1 to each occurrence of $p$ indexed by $T$.

For each $\r \in I_{n,p}(= T)$, we can define $\ph_{= T}(\r)$ recursively as follows:

The base cases occur when $T$ is a maximal set of occurrences for $p$ or $q$; that is, when there is no inversion sequence whose set of occurrences of $p$ or $q$ properly contains $T$.
{In this case, the map $\ph_{\geq T}$ will actually be a matching between $I_{n,p}(= T)$ and $I_{n,q}(= T)$, since $T$ is maximal so $I_{n,p}(\supseteq T) = I_{n,p}(= T)$.}

For a non-maximal set $T$, suppose we have bijections $\ph_{=S}$ for all $S$ properly containing $T$. Then define 
\begin{align*}
\ph_{= T}: (\r \in I_{n,p}(=T)) \xmapsto{\ph_{\geq T}
} 
(\r' \in I_{n,q}(=S) \ci I_{n,q}(\supseteq T) ) \xmapsto{\ph^{-1}_{= S}} (\r'' \in I_{n,p}(= S) ) &\\
\xmapsto{\ph_{\geq T}} (\r''' \in I_{n,q}(= R) )
\xmapsto{\,\ph^{-1}_{= R}, \ph_{\geq T}}_{\text{(repeat until)}} ( \r {''} {}^{\cdots} {'} 
\in I_{n,q}(= T)) &
\end{align*}
where $S = \Em(q, \ph_{\geq T}(\r))$; 
i.e. the set of occurrences of $q$ in the image of $\r$ after applying $\ph_{\geq T}$, and $R$ is defined similarly as the set of occurrences of $q$ in the sequence after applying the most recent arrow. By the hypothesis, each map applied is a bijection, so this gives a matching between $I_{n,p}(=T) $ and $I_{n,q}(=T)$.

Note: This process computes the subtraction as in \cite{doyle:lrs} of the maps $\phi_{= S}$ for all $S \supsetneq T$ from $\phi_{\geq T}$. A more in-depth explanation of this, along with some examples, can be found in Section 1.1 of \cite{ferreri}.

\begin{example} Suppose $p = \underline{1000}$ and $q = \underline{1110}$. The following examples illustrate how the recursive mapping works. The vertical lines indicate recursion depth, with the leftmost line corresponding to a depth of 0.
    \begin{enumerate}[(a)]
    \item $\r = 001000$ \\ 
We currently have $p$ at $\{ 3\}$; we want $q$ at $\{ 3\}$. \\ 
$\ph_{ = \{ 3\} } (001000)$: \\ 
\hphantom{...}$\mid$\,\,$\ph_{ \geq \{ 3\} } (001000) = 001110$ \\ 
\hphantom{...}$\mid$\,\,$q$ occurs at $\{ 3\}$, so we are done. \\ 
$001110$ \\ 
We finally obtain $001000 \mapsto 001110$.  \\

    Since there was only one occurrence and it worked to just swap it, this works the same way on this sequence as the other bijections would.

    \item 
    $\r = 0021110$ \\ 
We currently have $p$ at $\{ 3\}$; we want $q$ at $\{ 3\}$. \\ 
$\ph_{ = \{ 3\} } (0021110)$: \\ 
\hphantom{...}$\mid$\,\,$\ph_{ \geq \{ 3\} } (0021110) = 0022210$ \\ 
\hphantom{...}$\mid$\,\,$q$ occurs at $\{ 3\}$, so we are done. \\ 
$0022210$ \\ 
We finally obtain $0021110 \mapsto 0022210$.  \\

    The initial sequence had another copy of $q$ at position 4, but this got overwritten. Going the opposite direction, $0022210$ maps to $0021110$ (we just change the $q$ to a $p$).

    \item $\r = 002111110$ \\ 
We currently have $p$ at $\{ 3\}$; we want $q$ at $\{ 3\}$. \\ 
$\ph_{ = \{ 3\} } (002111110)$: \\ 
\hphantom{...}$\mid$\,\,$\ph_{ \geq \{ 3\} } (002111110) = 002221110$ \\ 
\hphantom{...}$\mid$\,\,$q$ occurs at $\{ 3, 6\}$, so apply $\ph\inv_{ = \{ 3, 6\} }$ \\ 
\hphantom{...}$\mid$\,\,$\ph\inv_{ = \{ 3, 6\} } (002221110)$: \\ 
\hphantom{...}$\mid$\,\,\hphantom{...}$\mid$\,\,$\ph\inv_{ \geq \{ 3, 6\} } (002221110) = 002111000$ \\ 
\hphantom{...}$\mid$\,\,\hphantom{...}$\mid$\,\,$p$ occurs at $\{ 3, 6\}$, so we are done. \\ 
\hphantom{...}$\mid$\,\,$002111000$ \\ 
\hphantom{...}$\mid$\,\,$\ph_{ \geq \{ 3\} } (002111000) = 002221000$ \\ 
\hphantom{...}$\mid$\,\,$q$ occurs at $\{ 3\}$, so we are done. \\ 
$002221000$ \\ 
We finally obtain $002111110 \mapsto 002221000$.  \\ 

The following pictures show the underdiagonal lattice paths of the original sequence, and the sequences after each application of $\ph_{\geq \{3\}}$.

\begin{figure}[H]
    \centering
 \begin{tikzpicture}[scale = .5] 
\draw[thick, green] (5.1, -.3 ) to[out=-30,in= 210] (8.9, -.3); 
\draw[green] (7,-1.3) node{$q$}; 
\filldraw[green, opacity=.3] (5, 0) -- (5, 1) -- (6, 1) -- (6,0); 
\filldraw[green, opacity=.3] (6, 0) -- (6, 1) -- (7, 1) -- (7,0); 
\filldraw[green, opacity=.3] (7, 0) -- (7, 1) -- (8, 1) -- (8,0); 
\filldraw[green, opacity=.3] (8, 0) -- (8, 0) -- (9, 0) -- (9,0); 
\filldraw[blue, opacity=.3] (2, 0) -- (2, 2) -- (3, 2) -- (3,0); 
\filldraw[blue, opacity=.3] (3, 0) -- (3, 1) -- (4, 1) -- (4,0); 
\filldraw[blue, opacity=.3] (4, 0) -- (4, 1) -- (5, 1) -- (5,0); 
\filldraw[blue, opacity=.3] (5, 0) -- (5, 1) -- (6, 1) -- (6,0); 
\draw[thick, blue] (2.1, -.3 ) to[out=-30,in= 210] (5.9, -.3); 
\draw[blue] (4,-1.3) node{$p$}; 
\draw (0,0) -- (1, 0); 
\draw (1,0) -- (1, 0) -- (2, 0); 
\draw (2,0) -- (2, 2) -- (3, 2); 
\draw (3,2) -- (3, 1) -- (4, 1); 
\draw (4,1) -- (4, 1) -- (5, 1); 
\draw (5,1) -- (5, 1) -- (6, 1); 
\draw (6,1) -- (6, 1) -- (7, 1); 
\draw (7,1) -- (7, 1) -- (8, 1); 
\draw (8,1) -- (8, 0) -- (9, 0); 
\end{tikzpicture}, \hfill
\begin{tikzpicture}[scale = .5] 
\filldraw[blue, opacity=.3] (4, 0) -- (4, 2) -- (5, 2) -- (5,0); 
\filldraw[blue, opacity=.3] (5, 0) -- (5, 1) -- (6, 1) -- (6,0); 
\filldraw[blue, opacity=.3] (6, 0) -- (6, 1) -- (7, 1) -- (7,0); 
\filldraw[blue, opacity=.3] (7, 0) -- (7, 1) -- (8, 1) -- (8,0); 
\draw[thick, blue] (4.1, -.3 ) to[out=-30,in= 210] (7.9, -.3); 
\draw[blue] (6,-1.3) node{$p$}; 
\draw[thick, green] (2.1, -.3 ) to[out=-30,in= 210] (5.9, -.3); 
\draw[green] (4,-1.3) node{$q$}; 
\filldraw[green, opacity=.3] (2, 0) -- (2, 2) -- (3, 2) -- (3,0); 
\filldraw[green, opacity=.3] (3, 0) -- (3, 2) -- (4, 2) -- (4,0); 
\filldraw[green, opacity=.3] (4, 0) -- (4, 2) -- (5, 2) -- (5,0); 
\filldraw[green, opacity=.3] (5, 0) -- (5, 1) -- (6, 1) -- (6,0); 
\draw[thick, green] (5.1, -.3 ) to[out=-30,in= 210] (8.9, -.3); 
\draw[green] (7,-1.3) node{$q$}; 
\filldraw[green, opacity=.3] (5, 0) -- (5, 1) -- (6, 1) -- (6,0); 
\filldraw[green, opacity=.3] (6, 0) -- (6, 1) -- (7, 1) -- (7,0); 
\filldraw[green, opacity=.3] (7, 0) -- (7, 1) -- (8, 1) -- (8,0); 
\filldraw[green, opacity=.3] (8, 0) -- (8, 0) -- (9, 0) -- (9,0); 
\draw[thick, dashed, blue] (0,0) -- (1, 0); 
\draw[thick, dashed, blue] (1,0) -- (1, 0) -- (2, 0); 
\draw[thick, dashed, blue] (2,0) -- (2, 2) -- (3, 2); 
\draw[thick, dashed, blue] (3,2) -- (3, 2) -- (4, 2); 
\draw[thick, dashed, blue] (4,2) -- (4, 2) -- (5, 2); 
\draw[thick, dashed, blue] (5,2) -- (5, 1) -- (6, 1); 
\draw[thick, dashed, blue] (6,1) -- (6, 1) -- (7, 1); 
\draw[thick, dashed, blue] (7,1) -- (7, 1) -- (8, 1); 
\draw[thick, dashed, blue] (8,1) -- (8, 0) -- (9, 0); 
\end{tikzpicture}, \hfill
\begin{tikzpicture}[scale = .5] 
\filldraw[blue, opacity=.3] (5, 0) -- (5, 1) -- (6, 1) -- (6,0); 
\filldraw[blue, opacity=.3] (6, 0) -- (6, 0) -- (7, 0) -- (7,0); 
\filldraw[blue, opacity=.3] (7, 0) -- (7, 0) -- (8, 0) -- (8,0); 
\filldraw[blue, opacity=.3] (8, 0) -- (8, 0) -- (9, 0) -- (9,0); 
\draw[thick, blue] (5.1, -.3 ) to[out=-30,in= 210] (8.9, -.3); 
\draw[blue] (7,-1.3) node{$p$}; 
\draw[thick, green] (2.1, -.3 ) to[out=-30,in= 210] (5.9, -.3); 
\draw[green] (4,-1.3) node{$q$}; 
\filldraw[green, opacity=.3] (2, 0) -- (2, 2) -- (3, 2) -- (3,0); 
\filldraw[green, opacity=.3] (3, 0) -- (3, 2) -- (4, 2) -- (4,0); 
\filldraw[green, opacity=.3] (4, 0) -- (4, 2) -- (5, 2) -- (5,0); 
\filldraw[green, opacity=.3] (5, 0) -- (5, 1) -- (6, 1) -- (6,0); 
\draw[thick, blue] (0,0) -- (1, 0); 
\draw[thick,  blue] (1,0) -- (1, 0) -- (2, 0); 
\draw[thick, blue] (2,0) -- (2, 2) -- (3, 2); 
\draw[thick,  blue] (3,2) -- (3, 2) -- (4, 2); 
\draw[thick, blue] (4,2) -- (4, 2) -- (5, 2); 
\draw[thick,  blue] (5,2) -- (5, 1) -- (6, 1); 
\draw[thick, blue] (6,1) -- (6, 0) -- (7, 0); 
\draw[thick, blue] (7,0) -- (7, 0) -- (8, 0); 
\draw[thick, blue] (8,0) -- (8, 0) -- (9, 0); 
\end{tikzpicture} 

    \caption{Underdiagonal lattice path diagrams of sequences $002111110$, $002221110$, $002221000$.}
\end{figure}
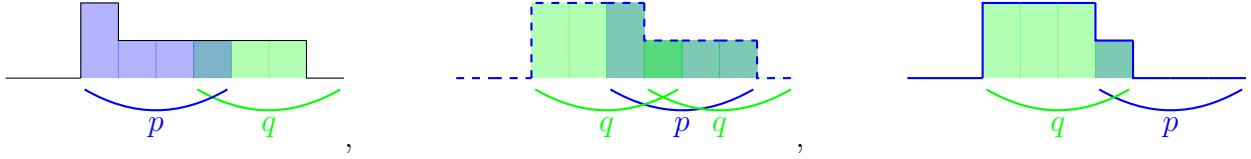

    \item \begin{samepage}

    $\r = 0100011111110$ \\ 
We currently have $p$ at $\{ 2\}$; we want $q$ at $\{ 2\}$. \\ 
$\ph_{ = \{ 2\} } (0100011111110)$: \\ 
\hphantom{...}$\mid$\,\,$\ph_{ \geq \{ 2\} } (0100011111110) = 0111011111110$ \\ 
\hphantom{...}$\mid$\,\,$q$ occurs at $\{ 2, 10\}$, so apply $\ph\inv_{ = \{ 2, 10\} }$ \\ 
\hphantom{...}$\mid$\,\,$\ph\inv_{ = \{ 2, 10\} } (0111011111110)$: \\ 
\hphantom{...}$\mid$\,\,\hphantom{...}$\mid$\,\,$\ph\inv_{ \geq \{ 2, 10\} } (0111011111110) = 0100011111000$ \\ 
\hphantom{...}$\mid$\,\,\hphantom{...}$\mid$\,\,$p$ occurs at $\{ 2, 10\}$, so we are done. \\ 
\hphantom{...}$\mid$\,\,$0100011111000$ \\ 
\hphantom{...}$\mid$\,\,$\ph_{ \geq \{ 2\} } (0100011111000) = 0111011111000$ \\ 
\hphantom{...}$\mid$\,\,$q$ occurs at $\{ 2, 8\}$, so apply $\ph\inv_{ = \{ 2, 8\} }$ \\ 
\hphantom{...}$\mid$\,\,$\ph\inv_{ = \{ 2, 8\} } (0111011111000)$: \\ 
\hphantom{...}$\mid$\,\,\hphantom{...}$\mid$\,\,$\ph\inv_{ \geq \{ 2, 8\} } (0111011111000) = 0100011100000$ \\ 
\hphantom{...}$\mid$\,\,\hphantom{...}$\mid$\,\,$p$ occurs at $\{ 2, 8\}$, so we are done. \\ 
\hphantom{...}$\mid$\,\,$0100011100000$ \\ 
\hphantom{...}$\mid$\,\,$\ph_{ \geq \{ 2\} } (0100011100000) = 0111011100000$ \\ 
\hphantom{...}$\mid$\,\,$q$ occurs at $\{ 2, 6\}$, so apply $\ph\inv_{ = \{ 2, 6\} }$ \\ 
\hphantom{...}$\mid$\,\,$\ph\inv_{ = \{ 2, 6\} } (0111011100000)$: \\ 
\hphantom{...}$\mid$\,\,\hphantom{...}$\mid$\,\,$\ph\inv_{ \geq \{ 2, 6\} } (0111011100000) = 0100010000000$ \\ 
\hphantom{...}$\mid$\,\,\hphantom{...}$\mid$\,\,$p$ occurs at $\{ 2, 6\}$, so we are done. \\ 
\hphantom{...}$\mid$\,\,$0100010000000$ \\ 
\hphantom{...}$\mid$\,\,$\ph_{ \geq \{ 2\} } (0100010000000) = 0111010000000$ \\ 
\hphantom{...}$\mid$\,\,$q$ occurs at $\{ 2\}$, so we are done. \\ 
$0111010000000$ \\ 
We finally obtain $0100011111110 \mapsto 0111010000000$.  \\ 
    \end{samepage}
    
The following pictures show the underdiagonal lattice paths of the original sequence, and the sequences after each application of $\ph_{\geq \{2\}}$.

    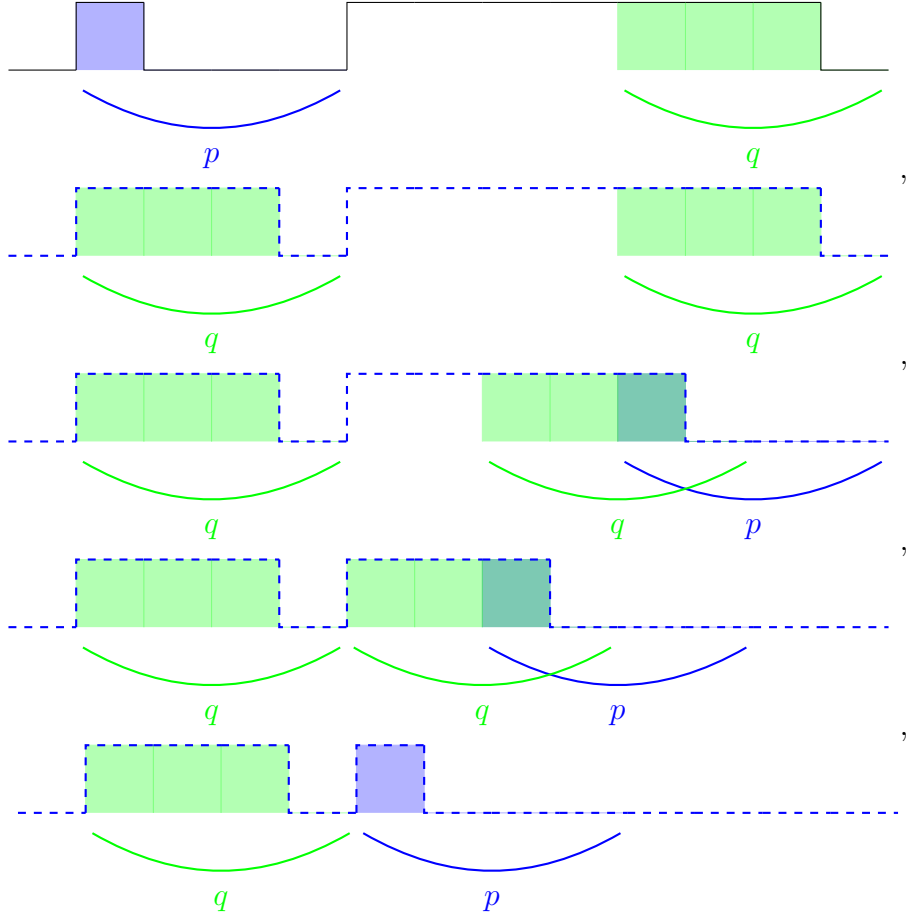
\begin{figure}[H]
        \centering
        \begin{tikzpicture}[scale = 0.9] 
\draw[thick, green] (9.1, -.3 ) to[out=-30,in= 210] (12.9, -.3); 
\draw[green] (11,-1.3) node{$q$}; 
\filldraw[green, opacity=.3] (9, 0) -- (9, 1) -- (10, 1) -- (10,0); 
\filldraw[green, opacity=.3] (10, 0) -- (10, 1) -- (11, 1) -- (11,0); 
\filldraw[green, opacity=.3] (11, 0) -- (11, 1) -- (12, 1) -- (12,0); 
\filldraw[green, opacity=.3] (12, 0) -- (12, 0) -- (13, 0) -- (13,0); 
\filldraw[blue, opacity=.3] (1, 0) -- (1, 1) -- (2, 1) -- (2,0); 
\filldraw[blue, opacity=.3] (2, 0) -- (2, 0) -- (3, 0) -- (3,0); 
\filldraw[blue, opacity=.3] (3, 0) -- (3, 0) -- (4, 0) -- (4,0); 
\filldraw[blue, opacity=.3] (4, 0) -- (4, 0) -- (5, 0) -- (5,0); 
\draw[thick, blue] (1.1, -.3 ) to[out=-30,in= 210] (4.9, -.3); 
\draw[blue] (3,-1.3) node{$p$}; 
\draw (0,0) -- (1, 0); 
\draw (1,0) -- (1, 1) -- (2, 1); 
\draw (2,1) -- (2, 0) -- (3, 0); 
\draw (3,0) -- (3, 0) -- (4, 0); 
\draw (4,0) -- (4, 0) -- (5, 0); 
\draw (5,0) -- (5, 1) -- (6, 1); 
\draw (6,1) -- (6, 1) -- (7, 1); 
\draw (7,1) -- (7, 1) -- (8, 1); 
\draw (8,1) -- (8, 1) -- (9, 1); 
\draw (9,1) -- (9, 1) -- (10, 1); 
\draw (10,1) -- (10, 1) -- (11, 1); 
\draw (11,1) -- (11, 1) -- (12, 1); 
\draw (12,1) -- (12, 0) -- (13, 0); 
\end{tikzpicture} ,
 
\begin{tikzpicture}[scale = .9] 
\draw[thick, green] (1.1, -.3 ) to[out=-30,in= 210] (4.9, -.3); 
\draw[green] (3,-1.3) node{$q$}; 
\filldraw[green, opacity=.3] (1, 0) -- (1, 1) -- (2, 1) -- (2,0); 
\filldraw[green, opacity=.3] (2, 0) -- (2, 1) -- (3, 1) -- (3,0); 
\filldraw[green, opacity=.3] (3, 0) -- (3, 1) -- (4, 1) -- (4,0); 
\filldraw[green, opacity=.3] (4, 0) -- (4, 0) -- (5, 0) -- (5,0); 
\draw[thick, green] (9.1, -.3 ) to[out=-30,in= 210] (12.9, -.3); 
\draw[green] (11,-1.3) node{$q$}; 
\filldraw[green, opacity=.3] (9, 0) -- (9, 1) -- (10, 1) -- (10,0); 
\filldraw[green, opacity=.3] (10, 0) -- (10, 1) -- (11, 1) -- (11,0); 
\filldraw[green, opacity=.3] (11, 0) -- (11, 1) -- (12, 1) -- (12,0); 
\filldraw[green, opacity=.3] (12, 0) -- (12, 0) -- (13, 0) -- (13,0); 
\draw[thick, dashed, blue] (0,0) -- (1, 0); 
\draw[thick, dashed, blue] (1,0) -- (1, 1) -- (2, 1); 
\draw[thick, dashed, blue] (2,1) -- (2, 1) -- (3, 1); 
\draw[thick, dashed, blue] (3,1) -- (3, 1) -- (4, 1); 
\draw[thick, dashed, blue] (4,1) -- (4, 0) -- (5, 0); 
\draw[thick, dashed, blue] (5,0) -- (5, 1) -- (6, 1); 
\draw[thick, dashed, blue] (6,1) -- (6, 1) -- (7, 1); 
\draw[thick, dashed, blue] (7,1) -- (7, 1) -- (8, 1); 
\draw[thick, dashed, blue] (8,1) -- (8, 1) -- (9, 1); 
\draw[thick, dashed, blue] (9,1) -- (9, 1) -- (10, 1); 
\draw[thick, dashed, blue] (10,1) -- (10, 1) -- (11, 1); 
\draw[thick, dashed, blue] (11,1) -- (11, 1) -- (12, 1); 
\draw[thick, dashed, blue] (12,1) -- (12, 0) -- (13, 0); 
\end{tikzpicture} ,
 
\begin{tikzpicture}[scale = .9] 
\filldraw[blue, opacity=.3] (9, 0) -- (9, 1) -- (10, 1) -- (10,0); 
\filldraw[blue, opacity=.3] (10, 0) -- (10, 0) -- (11, 0) -- (11,0); 
\filldraw[blue, opacity=.3] (11, 0) -- (11, 0) -- (12, 0) -- (12,0); 
\filldraw[blue, opacity=.3] (12, 0) -- (12, 0) -- (13, 0) -- (13,0); 
\draw[thick, blue] (9.1, -.3 ) to[out=-30,in= 210] (12.9, -.3); 
\draw[blue] (11,-1.3) node{$p$}; 
\draw[thick, green] (1.1, -.3 ) to[out=-30,in= 210] (4.9, -.3); 
\draw[green] (3,-1.3) node{$q$}; 
\filldraw[green, opacity=.3] (1, 0) -- (1, 1) -- (2, 1) -- (2,0); 
\filldraw[green, opacity=.3] (2, 0) -- (2, 1) -- (3, 1) -- (3,0); 
\filldraw[green, opacity=.3] (3, 0) -- (3, 1) -- (4, 1) -- (4,0); 
\filldraw[green, opacity=.3] (4, 0) -- (4, 0) -- (5, 0) -- (5,0); 
\draw[thick, green] (7.1, -.3 ) to[out=-30,in= 210] (10.9, -.3); 
\draw[green] (9,-1.3) node{$q$}; 
\filldraw[green, opacity=.3] (7, 0) -- (7, 1) -- (8, 1) -- (8,0); 
\filldraw[green, opacity=.3] (8, 0) -- (8, 1) -- (9, 1) -- (9,0); 
\filldraw[green, opacity=.3] (9, 0) -- (9, 1) -- (10, 1) -- (10,0); 
\filldraw[green, opacity=.3] (10, 0) -- (10, 0) -- (11, 0) -- (11,0); 
\draw[thick, dashed, blue] (0,0) -- (1, 0); 
\draw[thick, dashed, blue] (1,0) -- (1, 1) -- (2, 1); 
\draw[thick, dashed, blue] (2,1) -- (2, 1) -- (3, 1); 
\draw[thick, dashed, blue] (3,1) -- (3, 1) -- (4, 1); 
\draw[thick, dashed, blue] (4,1) -- (4, 0) -- (5, 0); 
\draw[thick, dashed, blue] (5,0) -- (5, 1) -- (6, 1); 
\draw[thick, dashed, blue] (6,1) -- (6, 1) -- (7, 1); 
\draw[thick, dashed, blue] (7,1) -- (7, 1) -- (8, 1); 
\draw[thick, dashed, blue] (8,1) -- (8, 1) -- (9, 1); 
\draw[thick, dashed, blue] (9,1) -- (9, 1) -- (10, 1); 
\draw[thick, dashed, blue] (10,1) -- (10, 0) -- (11, 0); 
\draw[thick, dashed, blue] (11,0) -- (11, 0) -- (12, 0); 
\draw[thick, dashed, blue] (12,0) -- (12, 0) -- (13, 0); 
\end{tikzpicture} ,
 
\begin{tikzpicture}[scale = .9] 
\filldraw[blue, opacity=.3] (7, 0) -- (7, 1) -- (8, 1) -- (8,0); 
\filldraw[blue, opacity=.3] (8, 0) -- (8, 0) -- (9, 0) -- (9,0); 
\filldraw[blue, opacity=.3] (9, 0) -- (9, 0) -- (10, 0) -- (10,0); 
\filldraw[blue, opacity=.3] (10, 0) -- (10, 0) -- (11, 0) -- (11,0); 
\draw[thick, blue] (7.1, -.3 ) to[out=-30,in= 210] (10.9, -.3); 
\draw[blue] (9,-1.3) node{$p$}; 
\draw[thick, green] (1.1, -.3 ) to[out=-30,in= 210] (4.9, -.3); 
\draw[green] (3,-1.3) node{$q$}; 
\filldraw[green, opacity=.3] (1, 0) -- (1, 1) -- (2, 1) -- (2,0); 
\filldraw[green, opacity=.3] (2, 0) -- (2, 1) -- (3, 1) -- (3,0); 
\filldraw[green, opacity=.3] (3, 0) -- (3, 1) -- (4, 1) -- (4,0); 
\filldraw[green, opacity=.3] (4, 0) -- (4, 0) -- (5, 0) -- (5,0); 
\draw[thick, green] (5.1, -.3 ) to[out=-30,in= 210] (8.9, -.3); 
\draw[green] (7,-1.3) node{$q$}; 
\filldraw[green, opacity=.3] (5, 0) -- (5, 1) -- (6, 1) -- (6,0); 
\filldraw[green, opacity=.3] (6, 0) -- (6, 1) -- (7, 1) -- (7,0); 
\filldraw[green, opacity=.3] (7, 0) -- (7, 1) -- (8, 1) -- (8,0); 
\filldraw[green, opacity=.3] (8, 0) -- (8, 0) -- (9, 0) -- (9,0); 
\draw[thick, dashed, blue] (0,0) -- (1, 0); 
\draw[thick, dashed, blue] (1,0) -- (1, 1) -- (2, 1); 
\draw[thick, dashed, blue] (2,1) -- (2, 1) -- (3, 1); 
\draw[thick, dashed, blue] (3,1) -- (3, 1) -- (4, 1); 
\draw[thick, dashed, blue] (4,1) -- (4, 0) -- (5, 0); 
\draw[thick, dashed, blue] (5,0) -- (5, 1) -- (6, 1); 
\draw[thick, dashed, blue] (6,1) -- (6, 1) -- (7, 1); 
\draw[thick, dashed, blue] (7,1) -- (7, 1) -- (8, 1); 
\draw[thick, dashed, blue] (8,1) -- (8, 0) -- (9, 0); 
\draw[thick, dashed, blue] (9,0) -- (9, 0) -- (10, 0); 
\draw[thick, dashed, blue] (10,0) -- (10, 0) -- (11, 0); 
\draw[thick, dashed, blue] (11,0) -- (11, 0) -- (12, 0); 
\draw[thick, dashed, blue] (12,0) -- (12, 0) -- (13, 0); 
\end{tikzpicture} ,
 
\begin{tikzpicture}[scale = .9] 
\filldraw[blue, opacity=.3] (5, 0) -- (5, 1) -- (6, 1) -- (6,0); 
\filldraw[blue, opacity=.3] (6, 0) -- (6, 0) -- (7, 0) -- (7,0); 
\filldraw[blue, opacity=.3] (7, 0) -- (7, 0) -- (8, 0) -- (8,0); 
\filldraw[blue, opacity=.3] (8, 0) -- (8, 0) -- (9, 0) -- (9,0); 
\draw[thick, blue] (5.1, -.3 ) to[out=-30,in= 210] (8.9, -.3); 
\draw[blue] (7,-1.3) node{$p$}; 
\draw[thick, green] (1.1, -.3 ) to[out=-30,in= 210] (4.9, -.3); 
\draw[green] (3,-1.3) node{$q$}; 
\filldraw[green, opacity=.3] (1, 0) -- (1, 1) -- (2, 1) -- (2,0); 
\filldraw[green, opacity=.3] (2, 0) -- (2, 1) -- (3, 1) -- (3,0); 
\filldraw[green, opacity=.3] (3, 0) -- (3, 1) -- (4, 1) -- (4,0); 
\filldraw[green, opacity=.3] (4, 0) -- (4, 0) -- (5, 0) -- (5,0); 
\draw[thick, dashed, blue] (0,0) -- (1, 0); 
\draw[thick, dashed, blue] (1,0) -- (1, 1) -- (2, 1); 
\draw[thick, dashed, blue] (2,1) -- (2, 1) -- (3, 1); 
\draw[thick, dashed, blue] (3,1) -- (3, 1) -- (4, 1); 
\draw[thick, dashed, blue] (4,1) -- (4, 0) -- (5, 0); 
\draw[thick, dashed, blue] (5,0) -- (5, 1) -- (6, 1); 
\draw[thick, dashed, blue] (6,1) -- (6, 0) -- (7, 0); 
\draw[thick, dashed, blue] (7,0) -- (7, 0) -- (8, 0); 
\draw[thick, dashed, blue] (8,0) -- (8, 0) -- (9, 0); 
\draw[thick, dashed, blue] (9,0) -- (9, 0) -- (10, 0); 
\draw[thick, dashed, blue] (10,0) -- (10, 0) -- (11, 0); 
\draw[thick, dashed, blue] (11,0) -- (11, 0) -- (12, 0); 
\draw[thick, dashed, blue] (12,0) -- (12, 0) -- (13, 0); 
\end{tikzpicture} 
 
        \caption[Underdiagonal lattice path diagrams.]{Underdiagonal lattice path diagrams of sequences $0100011111110$, $0111011111110$, $0111011111000$, $0111011100000$, $0111010000000$.}
    \end{figure}
\end{enumerate}
\end{example}

\subsection{Iterative interpretation}

For nonoverlapping consecutive patterns, we can translate this into an iterative process. 

\begin{lemma}\label{lem:qtop}
    Let $\ph\inv_{\geq S}$ be as previously defined, and let $\r \in I_n$ such that $\Em(q,\r) = S$ and $\Em(p,\r) = U$. If $p$ and $q$ are non-overlapping consecutive patterns of length 4, then $\Em(p,\ph\inv_{\geq S}(\r)) \ci S \cup U$.
\end{lemma}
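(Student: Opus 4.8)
The plan is to pin down exactly which coordinates of $\r$ are rewritten by $\ph\inv_{\geq S}$, and then invoke the non-overlapping hypothesis on $p$ to rule out any ``accidental'' occurrence of $p$ created near the rewritten region. The only occurrences of $p$ in the image that survive are therefore the ones we deliberately planted (at $S$) and the ones that were already there (at $U$).

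\textbf{Step 1 (what changes).} I would first record that $\ph\inv_{\geq S}$ is obtained by performing, at each $s\in S$, the change operation that turns the occurrence of $q$ at position $s$ into an occurrence of $p$. Since $p$ and $q$ agree on their first and last entries (a standing requirement for the change operation, hence for $\ph_{\geq S}$, to be defined), the order-preserving bijection $f$ used there satisfies $f(q_1)=f(p_1)$ and $f(q_4)=f(p_4)$, so only the two middle coordinates $\r_{s+1},\r_{s+2}$ of the window $[s,s+3]$ are altered. Because $S=\Em(q,\r)$ and $q$ is non-overlapping, the elements of $S$ are pairwise at distance $\ge 3$, so these single-window changes do not interfere, and all are carried out at once. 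Hence, writing $\r'=\ph\inv_{\geq S}(\r)$, the set $M=\{\,j:\r'_j\neq\r_j\,\}$ of altered coordinates satisfies $M\ci\bigcup_{s\in S}\{s+1,s+2\}$. Also, by the defining property of $\ph\inv_{\geq S}$ we have $\r'\in I_{n,p}(\supseteq S)$, i.e.\ $S\ci\Em(p,\r')$.

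\textbf{Step 2 (locating occurrences of $p$ in $\r'$).} The core of the argument is to take an arbitrary $i\in\Em(p,\r')$ with $i\notin S$ and show $i\in U=\Em(p,\r)$. Let $W_i=\{i,i+1,i+2,i+3\}$. If $W_i\cap M=\emptyset$, then $\r'$ and $\r$ agree throughout $W_i$, so the reduction of $\r_i\r_{i+1}\r_{i+2}\r_{i+3}$ is $p$ and $i\in U$. Otherwise pick $j\in W_i\cap M$; by Step 1, $j\in\{s+1,s+2\}$ for some $s\in S$, and combining $i\le j\le i+3$ with $j\in\{s+1,s+2\}$ forces $|i-s|\le 2$. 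But $i$ and $s$ are both occurrences of the length-$4$ pattern $p$ in $\r'$ (the latter since $S\ci\Em(p,\r')$), and $i\neq s$ because $i\notin S\ni s$; thus $\r'$ would contain two distinct occurrences of $p$ whose start positions differ by at most $2$, i.e.\ are ``less than $3$ apart'', contradicting that $p$ is non-overlapping. So this case cannot occur, and in all cases $i\in U$, giving $\Em(p,\r')\ci S\cup U$.

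\textbf{Main obstacle.} The one genuinely load-bearing point is the exclusion in Step 2: \emph{a priori}, overwriting two entries inside each window $[s,s+3]$ could spawn a brand-new occurrence of $p$ straddling the boundary of that window, at a position outside both $S$ and $U$. The crux is making the coordinate bookkeeping tight enough to see that any such position must lie within distance $2$ of some $s\in S$; once that is established, non-overlapping of $p$ closes the argument immediately. I would also check the boundary cases where $W_i$ or $[s,s+3]$ would run off the ends of $[n]$, but no occurrence of a length-$4$ pattern can start there, so these contribute nothing. (Note only non-overlapping of $p$ is used, not of $q$ and not any mutual condition.)
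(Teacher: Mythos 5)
Your proof is correct and follows essentially the same route as the paper: since $p$ and $q$ agree on their first and last entries, only the two middle entries of each window indexed by $S$ are rewritten, so any occurrence of $p$ in $\ph\inv_{\geq S}(\r)$ outside $U$ that meets a rewritten entry would start within distance $2$ of some $s\in S\ci\Em(p,\ph\inv_{\geq S}(\r))$, giving two occurrences of $p$ overlapping in more than one entry and contradicting that $p$ is non-overlapping. One small quibble: your closing parenthetical that non-overlapping of $q$ is never used contradicts your own Step 1, where you invoke it (via $S=\Em(q,\r)$) to space the windows so the simultaneous changes do not interfere --- harmless here, since the hypothesis supplies it.
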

\begin{proof}
    We will show that applying $\ph\inv_{\geq S}$ (i.e., changing $q$ at positions $S$ to $p$) to $\r$ cannot create new occurrences of $p$ outside of $S$ or $U$. 
    
    Applying $\ph\inv_{\geq S}$ only affects the sequence entries coinciding with the pattern occurrences indexed by $S$, by changing those occurrences of $q$ to occurrences of $p$. Suppose for contradiction that there was an occurrence $p$ not indexed by $U$ that was created in a position also not indexed by $S$. Then it would necessarily overlap the same entries as the $q$ indexed by $S$ that were previous there and were changed to $p$. (In the case of the consecutive patterns of length 4, each pair of non-overlapping Wilf equivalent patterns agree on the first and fourth entries, so the only entries changed would be the second or third entries of the previous $q$. Thus the overlap would have occurred on the second or third entries). This implies that we would have two copies of $p$ overlapping in more than one entry, which contradicts the assumption that $p$ is non-overlapping. 
    So there cannot have been any ``extra'' occurrences of $p$ created outside of $S$ (or $U$, where there may have been preexisting occurrences of $p$); thus $\Em(p,\ph\inv_{\geq S}(\r)) \ci S \cup U$.
\end{proof}

\begin{lemma}\label{lem:recdepth}
    Let $\ph_{=T}$ be the previously defined recursive bijection, and let $\r \in I_n$ with $\Em(p,\r) = T$. If $p$ and $q$ are non-overlapping consecutive patterns of length 4, then the recursion depth will not exceed 1.
\end{lemma}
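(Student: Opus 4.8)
The plan is to trace one step of the recursion and show it terminates immediately, using Lemma~\ref{lem:qtop} as the key input. Suppose $\r \in I_{n,p}(=T)$, so $\Em(p,\r) = T$. The first arrow applies $\ph_{\geq T}$, changing the occurrences of $p$ at positions $T$ into occurrences of $q$ at positions $T$; call the result $\r'$ and let $S = \Em(q,\r')$. By Lemma~\ref{lem:qtop} applied in the $p \leftrightarrow q$ direction (the roles of $p$ and $q$ are symmetric in the non-overlapping, interchangeable setting), changing $p$ at positions $T$ to $q$ cannot create occurrences of $q$ outside of $T$ together with the positions where $q$ already occurred in $\r$. But $\r$ has no occurrences of $p$ outside $T$, and — this is the point I would want to nail down first — since $p$ and $q$ are mutually non-overlapping and $\r'$ agrees with $\r$ outside the entries indexed by $T$, any occurrence of $q$ in $\r'$ that is not in $T$ must lie entirely outside those entries, hence is already an occurrence of $q$ in $\r$. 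So $S = T \cup (\text{occurrences of } q \text{ in } \r \text{ disjoint from the } T\text{-entries})$.

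Now if $S = T$, the recursion halts at depth $0$: $\ph_{\geq T}(\r) \in I_{n,q}(=T)$ and we are done. Otherwise $S \supsetneq T$, and the recursion calls $\ph^{-1}_{=S}$ on $\r'$. The inductive hypothesis (recursion depth at most $1$ for all sets properly containing... — actually we should phrase this as: $\ph^{-1}_{=S}$ is a genuine bijection $I_{n,q}(=S) \to I_{n,p}(=S)$, which we already know) produces $\r'' \in I_{n,p}(=S)$. Then we apply $\ph_{\geq T}$ again, changing $p$ at $T$ to $q$, obtaining $\r'''$ with $R = \Em(q,\r''')$. The crucial claim is that $R = T$, so that the recursion terminates at depth $1$. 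To see this, I would argue: $\r''$ has $\Em(p,\r'') = S$, and $S \setminus T$ consists of occurrences of $p$ in $\r''$ that are "far" from $T$ (disjoint from the $T$-entries, by the mutual non-overlapping property and the structure of $S$); applying $\ph_{\geq T}$ only touches the $T$-entries, so it leaves the occurrences of $p$ at $S \setminus T$ intact while removing those at $T$, yielding occurrences of $p$ at exactly $S \setminus T$ in $\r'''$. Simultaneously it creates occurrences of $q$ at $T$ in $\r'''$; by the same argument as before, no new $q$-occurrence appears outside $T$. Hence $\Em(q,\r''') = T$ — provided $\r'''$ has no occurrences of $q$ at $S \setminus T$.

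The main obstacle — and the step I would spend the most care on — is exactly that last parenthetical: showing that the occurrences of $q$ sitting at $S \setminus T$ in $\r'$ have been "converted back" and do not reappear in $\r'''$. This is where the specific length-$4$ structure enters: when we pass $\r' \mapsto \r''$ via $\ph^{-1}_{=S}$, the occurrences of $q$ at $S \setminus T$ become occurrences of $p$ at $S \setminus T$ (changing the second/third entry), and these entries are untouched by the subsequent $\ph_{\geq T}$. The content to verify is that the local word at a position in $S\setminus T$, after being a $p$-pattern, is not \emph{also} a $q$-pattern — i.e. that the change operation genuinely destroyed the $q$-occurrence. For non-overlapping interchangeable patterns of length $4$ this should follow from a short case check (or from the fact that $p$ and $q$ differ, combined with mutual non-overlapping preventing any residual entanglement with neighbors), and I would present it as a lemma-level observation rather than grind through all pattern pairs. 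Once that is in hand, $R = T$, the recursion stops, and the depth is at most $1$.
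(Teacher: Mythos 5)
Your proposal has two genuine problems: it aims at the wrong statement, and the statement it tries to prove is false. ``Recursion depth at most $1$'' is a claim about the nested call $\ph^{-1}_{=S}$: what must be shown is that when this call is made on $\r'=\ph_{\geq T}(\r)$, its first arrow $\ph^{-1}_{\geq S}$ already produces a sequence with $p$ occurring at exactly $S$, so the inner call returns without invoking any $\ph_{=S'}$ for $S'\supsetneq S$. You explicitly bypass this by treating $\ph^{-1}_{=S}$ as a black-box bijection ``which we already know''; but bijectivity carries no depth information --- in the $\underline{0102}/\underline{0112}$ case of Section \ref{sec:01020112case} the same maps are all bijections and the depth does exceed $1$. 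The omitted step is precisely the content of the paper's proof, which shows (via Lemma \ref{lem:qtop} and the non-overlapping property of $p$) that after changing $p$ at $T$ to $q$ and then $q$ at $S$ to $p$, no occurrence of $p$ remains outside $S$.

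Moreover, your ``crucial claim'' $R=T$ is simply not true, and the lemma does not need it. Take $p=\underline{1000}$, $q=\underline{1110}$, $\r=0100011111110$, $T=\{2\}$ (the paper's own example): $\ph_{\geq T}$ gives $0111011111110$ with $S=\{2,10\}$, the inner call returns $\r''=0100011111000$, and the next application of $\ph_{\geq T}$ gives $\r'''=0111011111000$ with $\Em(q,\r''')=\{2,8\}\neq T$; the outer loop runs several more rounds (which is exactly why Theorem \ref{thm:iter} has a ``repeat until $S\setminus T=\emptyset$'' step), while the depth stays $1$. Your argument misses that $\r''$ may contain occurrences of $q$ outside $T$ (here at position $8$) that are disjoint from the entries touched by $\ph_{\geq T}$ and therefore survive into $\r'''$; you only rule out newly created $q$'s and $q$'s at $S\setminus T$. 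Finally, both of your structural claims lean on $p$ and $q$ being mutually non-overlapping, which is neither a hypothesis of the lemma nor true for the pairs it is applied to: $21110$ has $\underline{1000}$ at position $1$ and $\underline{1110}$ at position $2$, overlapping in three entries. The failure of mutual non-overlap is exactly why these pairs require the recursive construction rather than the direct simultaneous swap.
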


\begin{proof}
    We will show that 
    after (i) turning all $p$ at positions $T$ to $q$, and then (ii) turning all $q$ at positions $S \cs T$ to $p$, there will be no $p$ at a position outside of $S$ remaining in the sequence.

Suppose to the contrary that after step (ii), there is still some $p$ at position $r \notin S$. Since $T \ci S$, this means $r \notin T$, so this occurrence of $p$ at $r$ was not in the original sequence. Then it was either created on step (i) when $p$ were changed to $q$ or on step (ii) when $q$ were changed to $p$.

In step (i), all $p$ were changed to $q$. If this had created an extra occurrence of $p$ in the sequence, then the new occurrences of $p$ would necessarily overlap one of the newly created occurrences of $q$ at a position indexed by $T$. Then since the $q$ overlapping $p$ at $r$ is at a position in $T \ci S$, in step (ii) this $q$ would then be changed to a $p$, erasing the $p$ at position $r$ (e.g. $\underline{1000}00 \rightarrow 11\underline{1000}0 \rightarrow \underline{1000}00$). We know that this $p$ will be erased because if it was preserved, then we would have two overlapping instances of $p$, but $p$ is non-overlapping.

Otherwise it must be the case that the new occurrence of $p$ at $r$ was created on step (ii). By Lemma \ref{lem:qtop}, it is not possible for a new $p$ to appear after step (ii) that was not either already in the sequence after step (i) or in a position where a $q$ was after step (i). We have just checked that any newly created $p$ from step (i) will be erased after step (ii),  and the 
positions where $q$ appeared after step (i) are all indexed by $S$, so step (ii) cannot have created a new $p$ at a position outside of $S$.

Thus, if a sequence $\r$ had $q$ at $S \cs T$ after the first arrow, then $\r$ has $p$ at exactly $S$ after the second arrow.
So $\ph_{=S}\inv(\r)$ will have been computed without increasing the recursion depth. 
\end{proof}

\begin{theorem} \label{thm:iter}
    Let $p$ and $q$ be non-overlapping consecutive patterns of length 4. The previously defined recursive bijection $\ph$ matches the following iterative description.
    Let $\r \in I_n$.
    \begin{enumerate}
    \item Find $T:= \Em(p,\r)$.
    \item Change all occurrences of $p$ in $T$ to $q$; let $\r'$ be the resulting sequence.
    \item Now let $S = \Em(q,\r')$.
    Change all of the occurrences of $q$ in $S \setminus T$ to $p$.
    \item Repeat step (c) until $S \setminus T = \emptyset$, so we now have a sequence $\r'$ with $q$ occurring exactly at $T$.
\end{enumerate}
\end{theorem}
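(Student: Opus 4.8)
The plan is to flatten the recursive definition of $\ph_{=T}$ into the stated loop, with Lemma~\ref{lem:recdepth} as the main lever. Because the recursion depth in computing $\ph_{=T}(\r)$ never exceeds $1$, every ``bounce'' $\ph\inv_{=S}$ that is invoked is carried out without any further recursion; unwinding the definition, this forces $\ph\inv_{=S}$ to agree with $\ph\inv_{\geq S}$ on whatever sequence $\tau$ it is applied to, since if computing $\ph_{=S}(\sigma)=\tau$ needs no recursion then $\ph_{=S}(\sigma)=\ph_{\geq S}(\sigma)$, whence $\sigma=\ph\inv_{\geq S}(\tau)$. So the first step is simply to record that $\ph_{=T}(\r)$ is computed by: apply $\ph_{\geq T}$; then, while the current set $S$ of positions of occurrences of $q$ properly contains $T$, apply $\ph\inv_{\geq S}$ followed by $\ph_{\geq T}$; and stop once the set of occurrences of $q$ equals $T$ (equivalently, once $S\setminus T=\emptyset$, since $T$ always remains among the occurrences of $q$).

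The second step is to identify these maps with steps (a)--(c). The initial $\ph_{\geq T}$ applied to $\r\in I_{n,p}(=T)$ changes every occurrence of $p$ at a position of $T=\Em(p,\r)$ into an occurrence of $q$, which is exactly step (b) (with $T$ as computed in step (a)). For the loop body, I would show that applying $\ph\inv_{\geq S}$ and then $\ph_{\geq T}$ to a sequence $\r'$ with $\Em(q,\r')=S\supseteq T$ has the single net effect of changing each occurrence of $q$ at a position of $S\setminus T$ into an occurrence of $p$ --- i.e.\ it is step (c). In this composite, $\ph\inv_{\geq S}$ changes every $q$ at $S$ into a $p$, altering only the second and third entries of each length-$4$ window (the patterns agree on their first and last entries), and then $\ph_{\geq T}$ changes the $p$'s just created at the positions of $T$ back into $q$'s. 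Two observations make this collapse cleanly. First, $p$ and $q$ both realize all of $\{0,\dots,d\}$, so the order-preserving relabelling used to change a $q$ at a fixed position into a $p$ is the inverse of the one used to change it back, and a $q\to p\to q$ round trip there restores the original entries. Second, $q$ is non-overlapping, so distinct positions of $S$ lie at least $3$ apart, and since a change touches only the two middle entries of its window, the entries modified at positions of $T$ are disjoint from those modified at positions of $S\setminus T$ (and from all entries outside the $S$-windows). Hence $\ph_{\geq T}\circ\ph\inv_{\geq S}$ leaves the occurrences of $q$ at $T$, and every entry outside the $S$-windows, exactly as in $\r'$, and turns precisely the $q$'s at $S\setminus T$ into $p$'s.

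Combining the two steps, the flattened computation of $\ph_{=T}(\r)$ performs step (b) once and then iterates step (c) --- each time recomputing $S$ as the current occurrence set of $q$ --- until $S\setminus T=\emptyset$, at which moment the sequence lies in $I_{n,q}(=T)$, which is simultaneously the stopping condition of the iterative description and the output of the recursion; termination is inherited for free, as the recursion is well-founded (it is defined by downward induction over the finitely many subsets of $[n]$). I expect the main obstacle to be the bookkeeping in the second step: checking carefully that the two successive rounds of changes never interfere and that the round trip at the positions of $T$ is literally the identity on those entries, which is precisely where the length-$4$ and (mutually) non-overlapping hypotheses are used, in the same spirit as the proofs of Lemmas~\ref{lem:qtop} and~\ref{lem:recdepth}. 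One should also keep in mind, though it is routine, that every change invoked is valid (produces an inversion sequence), since $p$ and $q$ are interchangeable and $\ph_{\geq S},\ph_{\geq T}$ are bijections onto the stated sets.
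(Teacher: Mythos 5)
Your proposal is correct and follows essentially the same route as the paper: Lemma~\ref{lem:recdepth} is used to flatten the recursion into alternating applications of $\ph\inv_{\geq S}$ and $\ph_{\geq T}$, and then one checks that each such round leaves the windows at $T$ (and everything outside the $S$-windows) untouched, so its net effect is exactly step (c). Your disjointness-of-changed-entries and round-trip-identity bookkeeping is just a slightly more explicit rendering of the paper's observation that the occurrences at $T$ swap back and forth without ever being erased.
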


\begin{proof}
By Lemma \ref{lem:recdepth}, the recursion depth will never exceed 1 when computing $\ph(\r)$. So the mapping will continue to apply 
$\ph_{=S}\inv$ and $\ph_{\geq T}$,
for each new set 
$S$ containing $T$,
until $T = S$.

Moreover, the initial occurrences of $p$ at $T$ will swap back and forth between $p$ and $q$ at each step. We know that the original occurrences of $p$ will not be erased because this could only happen if some $q$ appeared at position $i$ overlapping the initial $p$ and then was changed to a $p$, erasing some original occurrence. However, if this $q$ appeared in the original sequence, then the first arrow would change the $p$ to $q$, erasing the $q$ at position $i$. 

{The $q$ could not have appeared on the second arrow by the same reasoning as in the proof of Lemma \ref{lem:recdepth} (changing $p$ to $q$ cannot create new occurrences of $q$ outside of where the $p$ were).}

Therefore, the original occurrences of $p$ at $T$ will switch back and forth, and at the end will turn into occurrences of $q$ 
at $T$, so we can view the bijection as changing the initial occurrences of $p$ to occurrences of $q$ and then changing only entries off of $T$. 
This matches exactly the iterative description.
\end{proof}

The previous theorem gives a direct bijective proof of the super-strong Wilf equivalences $\underline{1000} \sim \underline{1110}$ and $\underline{2100} \sim \underline{2210}$.

\begin{example} Again let $p = \underline{1000}$, $q = \underline{1110}$.
The sequence $\r = 002111110$ has $p$ at $T = \{3\}$. We change this to $002221110$ and have $q$ at $S = \{3,6\}$. Then $S \setminus T = \{ 6\}$, so we change the $q$ at $6$ back to $p$, and obtain $002221000$. This now has $S = \{3\} = T$, so we are done.\vspace{3mm} 

    Going the other direction, $002221000$ has $q$ at $\{3\}$, so we change it to $p$ and obtain $002111000$. Now we have $p$ at $\{3,6\}$ so we change the $p$ at $6$ back to $q$ and obtain $002111110$.

\end{example}

\section[Not mutually non-overlapping]{Patterns that are not mutually non-overlapping} \label{sec:01020112case}

    The remaining equivalences are
    (i) $\underline{0102}\sim\underline{0112}$ and 
    (xiii) $ \underline{2010}\sim\underline{2110}\sim\underline{2120}.$
       Auli and Elizalde prove these equivalences similarly as before, by constructing maps which exchange the patterns, and then arguing by inclusion-exclusion. The maps which exchange the patterns are slightly more complicated since these patterns can overlap themselves. The bijection for swapping overlapping copies of the pattern $\underline{0102}$ to overlapping copies of a pattern $\underline{0112}$ is given by Auli and Elizalde \cite{aulielizalde}, as follows: 
 Since $\underline{0102}$ can overlap itself in two entries, a string of overlapping occurrences of this pattern might look something like: $01020304\ldots$.
 To get the pattern $\underline{0112}$ instead, this would then be changed into $01122334\ldots$.
  Viewing these sequences as underdiagonal lattice paths, this can be represented by making the second ``0'' of each occurrence match the height of the entry to its left.
This process changes all occurrences of $\underline{0102}$ in a sequence to $\underline{0112}$. 
For the relation (xiii), they state that similarly constructed maps can be used for changing between $\underline{2010}$, $\underline{2110}$ and $\underline{2120}$.

We can still define a bijection recursively for this case, the same way as before. However, the iterative algorithm from Theorem \ref{thm:iter} does not match the recursive process in this case. For example, if we try to replicate the same iterative process we used before on $\r = 010223$, we would obtain $011223$, which has $q$ at $\{1,3\}$, and then changing $q$ at $3$ back to $p$ gives $011213$, so now we have $q$ only at $\{1\}$, as desired. But also, applying the iterative process to $\r= 010213$, which has $p$ at $\{1\}$, we get $011213$, which also has $q$ at position $\{1\}$, so we would stop.
But now we've mapped two different sequences to the same sequence! Under the recursive map, we would instead send $010223 \mapsto 011203$ and $010213 \mapsto 011213$. Because the recursion depth can exceed 1 in this case, the argument for the iterative algorithm described in Theorem \ref{thm:iter} does not apply. 

While 
we do not have 
a more direct combinatorial interpretation of this bijection, in Section \ref{sec:superduper} we give an alternative bijection which has a more straightforward description.

Here we include examples of how the recursive bijection works for this case.
Viewing the inversion sequences as underdiagonal lattice paths, we can visualize the steps of the recursive map as illustrated below.

\begin{example}
\begin{enumerate}[(a)]
  \item For  $\r = 010223$, the recursive map is computed as follows. \\ 
Let $p = \underline{0102}$ and $q = \underline{0112}$. We currently have $p$ at $\{ 1\}$; we want $q$ at $\{ 1\}$. \\ 
$\ph_{ = \{ 1\} } (010223)$: \\ 
\hphantom{...}$\mid$\,\,$\ph_{ \geq \{ 1\} } (010223) = 011223$ \\ 
\hphantom{...}$\mid$\,\,$q$ occurs at $\{ 1, 3\}$, so apply $\ph\inv_{ = \{ 1, 3\} }$ \\ 
\hphantom{...}$\mid$\,\,$\ph\inv_{ = \{ 1, 3\} } (011223)$: \\ 
\hphantom{...}$\mid$\,\,\hphantom{...}$\mid$\,\,$\ph\inv_{ \geq \{ 1, 3\} } (011223) = 010203$ \\ 
\hphantom{...}$\mid$\,\,\hphantom{...}$\mid$\,\,$p$ occurs at $\{ 1, 3\}$, so we are done. \\ 
\hphantom{...}$\mid$\,\,$010203$ \\ 
\hphantom{...}$\mid$\,\,$\ph_{ \geq \{ 1\} } (010203) = 011203$ \\ 
\hphantom{...}$\mid$\,\,$q$ occurs at $\{ 1\}$, so we are done. \\ 
$011203$ \\ 
We finally obtain $010223 \mapsto 011203$.  \\ 

The following pictures show the underdiagonal lattice paths of the original sequence, and the sequences after each application of $\ph_{\geq \{1\}}$.
\begin{figure}[H]
    \centering
    \,\hfill
     \begin{tikzpicture}[scale = 0.6] 
\draw[thick, green] (2.1, -.3 ) to[out=-30,in= 210] (5.9, -.3); 
\draw[green] (4,-1.3) node{$q$}; 
\filldraw[green, opacity=.3] (2, 0) -- (2, 0) -- (3, 0) -- (3,0); 
\filldraw[green, opacity=.3] (3, 0) -- (3, 2) -- (4, 2) -- (4,0); 
\filldraw[green, opacity=.3] (4, 0) -- (4, 2) -- (5, 2) -- (5,0); 
\filldraw[green, opacity=.3] (5, 0) -- (5, 3) -- (6, 3) -- (6,0); 
\filldraw[blue, opacity=.3] (0, 0) -- (0, 0) -- (1, 0) -- (1,0); 
\filldraw[blue, opacity=.3] (1, 0) -- (1, 1) -- (2, 1) -- (2,0); 
\filldraw[blue, opacity=.3] (2, 0) -- (2, 0) -- (3, 0) -- (3,0); 
\filldraw[blue, opacity=.3] (3, 0) -- (3, 2) -- (4, 2) -- (4,0); 
\draw[thick, blue] (0.1, -.3 ) to[out=-30,in= 210] (3.9, -.3); 
\draw[blue] (2,-1.3) node{$p$}; 
\draw (0,0) -- (1, 0); 
\draw (1,0) -- (1, 1) -- (2, 1); 
\draw (2,1) -- (2, 0) -- (3, 0); 
\draw (3,0) -- (3, 2) -- (4, 2); 
\draw (4,2) -- (4, 2) -- (5, 2); 
\draw (5,2) -- (5, 3) -- (6, 3); 
\end{tikzpicture}, \hfill
\begin{tikzpicture}[scale = .6] 
\draw[thick, green] (0.1, -.3 ) to[out=-30,in= 210] (3.9, -.3); 
\draw[green] (2,-1.3) node{$q$}; 
\filldraw[green, opacity=.3] (0, 0) -- (0, 0) -- (1, 0) -- (1,0); 
\filldraw[green, opacity=.3] (1, 0) -- (1, 1) -- (2, 1) -- (2,0); 
\filldraw[green, opacity=.3] (2, 0) -- (2, 1) -- (3, 1) -- (3,0); 
\filldraw[green, opacity=.3] (3, 0) -- (3, 2) -- (4, 2) -- (4,0); 
\draw[thick, green] (2.1, -.3 ) to[out=-30,in= 210] (5.9, -.3); 
\draw[green] (4,-1.3) node{$q$}; 
\filldraw[green, opacity=.3] (2, 0) -- (2, 1) -- (3, 1) -- (3,0); 
\filldraw[green, opacity=.3] (3, 0) -- (3, 2) -- (4, 2) -- (4,0); 
\filldraw[green, opacity=.3] (4, 0) -- (4, 2) -- (5, 2) -- (5,0); 
\filldraw[green, opacity=.3] (5, 0) -- (5, 3) -- (6, 3) -- (6,0); 
\draw[thick, dashed, blue] (0,0) -- (1, 0); 
\draw[thick, dashed, blue] (1,0) -- (1, 1) -- (2, 1); 
\draw[thick, dashed, blue] (2,1) -- (2, 1) -- (3, 1); 
\draw[thick, dashed, blue] (3,1) -- (3, 2) -- (4, 2); 
\draw[thick, dashed, blue] (4,2) -- (4, 2) -- (5, 2); 
\draw[thick, dashed, blue] (5,2) -- (5, 3) -- (6, 3); 
\end{tikzpicture}, \hfill
\begin{tikzpicture}[scale = .6] 
\draw[thick, green] (0.1, -.3 ) to[out=-30,in= 210] (3.9, -.3); 
\draw[green] (2,-1.3) node{$q$}; 
\filldraw[green, opacity=.3] (0, 0) -- (0, 0) -- (1, 0) -- (1,0); 
\filldraw[green, opacity=.3] (1, 0) -- (1, 1) -- (2, 1) -- (2,0); 
\filldraw[green, opacity=.3] (2, 0) -- (2, 1) -- (3, 1) -- (3,0); 
\filldraw[green, opacity=.3] (3, 0) -- (3, 2) -- (4, 2) -- (4,0); 
\draw[thick, dashed, blue] (0,0) -- (1, 0); 
\draw[thick, dashed, blue] (1,0) -- (1, 1) -- (2, 1); 
\draw[thick, dashed, blue] (2,1) -- (2, 1) -- (3, 1); 
\draw[thick, dashed, blue] (3,1) -- (3, 2) -- (4, 2); 
\draw[thick, dashed, blue] (4,2) -- (4, 0) -- (5, 0); 
\draw[thick, dashed, blue] (5,0) -- (5, 3) -- (6, 3); 
\end{tikzpicture}.\hfill \,
    \caption{Underdiagonal lattice path diagrams for 010223, 011223, 011203.}
\end{figure}
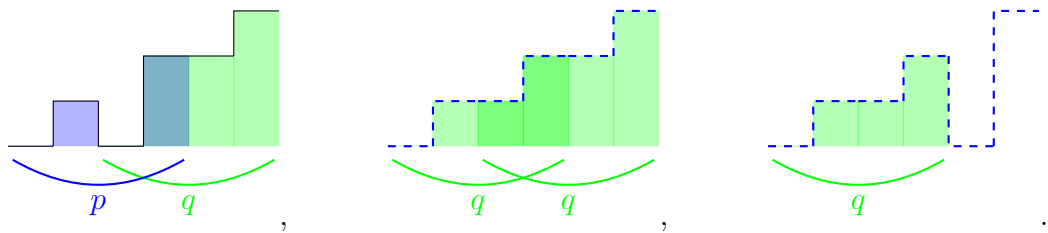

\item For $\r = 010213$, we currently have $p$ at $\{ 1\}$, and want $q$ at $\{ 1\}$. \\ 
$\ph_{ = \{ 1\} } (010213)$: \\ 
\hphantom{...}$\mid$\,\,$\ph_{ \geq \{ 1\} } (010213) = 011213$ \\ 
\hphantom{...}$\mid$\,\,$q$ occurs at $\{ 1\}$, so we are done. \\ 
$011213$ \\ 
We finally obtain $010213 \mapsto 011213$.  \\ 
 The following pictures show the underdiagonal lattice paths of the original sequence, and the sequence after the application of $\ph_{\geq \{1\}}$.

 \begin{figure}[H]
     \centering
     \begin{tikzpicture}[scale = 0.6] 
\filldraw[blue, opacity=.3] (0, 0) -- (0, 0) -- (1, 0) -- (1,0); 
\filldraw[blue, opacity=.3] (1, 0) -- (1, 1) -- (2, 1) -- (2,0); 
\filldraw[blue, opacity=.3] (2, 0) -- (2, 0) -- (3, 0) -- (3,0); 
\filldraw[blue, opacity=.3] (3, 0) -- (3, 2) -- (4, 2) -- (4,0); 
\draw[thick, blue] (0.1, -.3 ) to[out=-30,in= 210] (3.9, -.3); 
\draw[blue] (2,-1.3) node{$p$}; 
\draw (0,0) -- (1, 0); 
\draw (1,0) -- (1, 1) -- (2, 1); 
\draw (2,1) -- (2, 0) -- (3, 0); 
\draw (3,0) -- (3, 2) -- (4, 2); 
\draw (4,2) -- (4, 1) -- (5, 1); 
\draw (5,1) -- (5, 3) -- (6, 3); 
\end{tikzpicture}, \hspace{2cm}
\begin{tikzpicture}[scale = .6] 
\filldraw[blue, opacity=.3] (2, 0) -- (2, 1) -- (3, 1) -- (3,0); 
\filldraw[blue, opacity=.3] (3, 0) -- (3, 2) -- (4, 2) -- (4,0); 
\filldraw[blue, opacity=.3] (4, 0) -- (4, 1) -- (5, 1) -- (5,0); 
\filldraw[blue, opacity=.3] (5, 0) -- (5, 3) -- (6, 3) -- (6,0); 
\draw[thick, blue] (2.1, -.3 ) to[out=-30,in= 210] (5.9, -.3); 
\draw[blue] (4,-1.3) node{$p$}; 
\draw[thick, green] (0.1, -.3 ) to[out=-30,in= 210] (3.9, -.3); 
\draw[green] (2,-1.3) node{$q$}; 
\filldraw[green, opacity=.3] (0, 0) -- (0, 0) -- (1, 0) -- (1,0); 
\filldraw[green, opacity=.3] (1, 0) -- (1, 1) -- (2, 1) -- (2,0); 
\filldraw[green, opacity=.3] (2, 0) -- (2, 1) -- (3, 1) -- (3,0); 
\filldraw[green, opacity=.3] (3, 0) -- (3, 2) -- (4, 2) -- (4,0); 
\draw[thick, dashed, blue] (0,0) -- (1, 0); 
\draw[thick, dashed, blue] (1,0) -- (1, 1) -- (2, 1); 
\draw[thick, dashed, blue] (2,1) -- (2, 1) -- (3, 1); 
\draw[thick, dashed, blue] (3,1) -- (3, 2) -- (4, 2); 
\draw[thick, dashed, blue] (4,2) -- (4, 1) -- (5, 1); 
\draw[thick, dashed, blue] (5,1) -- (5, 3) -- (6, 3); 
\end{tikzpicture}.
     \caption{Underdiagonal lattice path diagrams for 010213, 011213.}
 \end{figure}
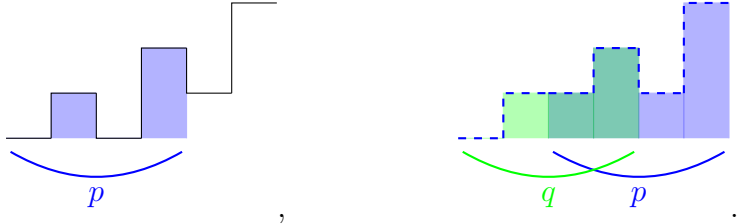
\end{enumerate}
\end{example}

For a sequence which requires larger recursion depth, the steps can become more complicated, as seen in the next example.

\begin{example} \label{ex:01022304}

Suppose $p = \underline{0102}$, $q = \underline{0112}$. 
Let $\r = 01022304$. \\
We currently have $p$ at $\{ 1\}$; we want $q$ at $\{ 1\}$. \\ 
$\ph_{ = \{ 1\} } (01022304)$: \\ 
\hphantom{...}$\mid$\,\,$\ph_{ \geq \{ 1\} } (01022304) = 01122304$ \\ 
\hphantom{...}$\mid$\,\,$q$ occurs at $\{ 1, 3\}$, so apply $\ph\inv_{ = \{ 1, 3\} }$ \\ 
\hphantom{...}$\mid$\,\,$\ph\inv_{ = \{ 1, 3\} } (01122304)$: \\ 
\hphantom{...}$\mid$\,\,\hphantom{...}$\mid$\,\,$\ph\inv_{ \geq \{ 1, 3\} } (01122304) = 01020304$ \\ 
\hphantom{...}$\mid$\,\,\hphantom{...}$\mid$\,\,$p$ occurs at $\{ 1, 3, 5\}$, so apply $\ph_{ = \{ 1, 3, 5\} }$ \\ 
\hphantom{...}$\mid$\,\,\hphantom{...}$\mid$\,\,$\ph_{ = \{ 1, 3, 5\} } (01020304)$: \\ 
\hphantom{...}$\mid$\,\,\hphantom{...}$\mid$\,\,\hphantom{...}$\mid$\,\,$\ph_{ \geq \{ 1, 3, 5\} } (01020304) = 01122334$ \\ 
\hphantom{...}$\mid$\,\,\hphantom{...}$\mid$\,\,\hphantom{...}$\mid$\,\,$q$ occurs at $\{ 1, 3, 5\}$, so we are done. \\ 
\hphantom{...}$\mid$\,\,\hphantom{...}$\mid$\,\,$01122334$ \\ 
\hphantom{...}$\mid$\,\,\hphantom{...}$\mid$\,\,$\ph\inv_{ \geq \{ 1, 3\} } (01122334) = 01020334$ \\ 
\hphantom{...}$\mid$\,\,\hphantom{...}$\mid$\,\,$p$ occurs at $\{ 1, 3\}$, so we are done. \\ 
\hphantom{...}$\mid$\,\,$01020334$ \\ 
\hphantom{...}$\mid$\,\,$\ph_{ \geq \{ 1\} } (01020334) = 01120334$ \\ 
\hphantom{...}$\mid$\,\,$q$ occurs at $\{ 1, 5\}$, so apply $\ph\inv_{ = \{ 1, 5\} }$ \\ 
\hphantom{...}$\mid$\,\,$\ph\inv_{ = \{ 1, 5\} } (01120334)$: \\ 
\hphantom{...}$\mid$\,\,\hphantom{...}$\mid$\,\,$\ph\inv_{ \geq \{ 1, 5\} } (01120334) = 01020304$ \\ 
\hphantom{...}$\mid$\,\,\hphantom{...}$\mid$\,\,$p$ occurs at $\{ 1, 3, 5\}$, so apply $\ph_{ = \{ 1, 3, 5\} }$ \\ 
\hphantom{...}$\mid$\,\,\hphantom{...}$\mid$\,\,$\ph_{ = \{ 1, 3, 5\} } (01020304)$: \\ 
\hphantom{...}$\mid$\,\,\hphantom{...}$\mid$\,\,\hphantom{...}$\mid$\,\,$\ph_{ \geq \{ 1, 3, 5\} } (01020304) = 01122334$ \\ 
\hphantom{...}$\mid$\,\,\hphantom{...}$\mid$\,\,\hphantom{...}$\mid$\,\,$q$ occurs at $\{ 1, 3, 5\}$, so we are done. \\ 
\hphantom{...}$\mid$\,\,\hphantom{...}$\mid$\,\,$01122334$ \\ 
\hphantom{...}$\mid$\,\,\hphantom{...}$\mid$\,\,$\ph\inv_{ \geq \{ 1, 5\} } (01122334) = 01022324$ \\ 
\hphantom{...}$\mid$\,\,\hphantom{...}$\mid$\,\,$p$ occurs at $\{ 1, 5\}$, so we are done. \\ 
\hphantom{...}$\mid$\,\,$01022324$ \\ 
\hphantom{...}$\mid$\,\,$\ph_{ \geq \{ 1\} } (01022324) = 01122324$ \\ 
\hphantom{...}$\mid$\,\,$q$ occurs at $\{ 1, 3\}$, so apply $\ph\inv_{ = \{ 1, 3\} }$ \\ 
\hphantom{...}$\mid$\,\,$\ph\inv_{ = \{ 1, 3\} } (01122324)$: \\ 
\hphantom{...}$\mid$\,\,\hphantom{...}$\mid$\,\,$\ph\inv_{ \geq \{ 1, 3\} } (01122324) = 01020324$ \\ 
\hphantom{...}$\mid$\,\,\hphantom{...}$\mid$\,\,$p$ occurs at $\{ 1, 3\}$, so we are done. \\ 
\hphantom{...}$\mid$\,\,$01020324$ \\ 
\hphantom{...}$\mid$\,\,$\ph_{ \geq \{ 1\} } (01020324) = 01120324$ \\ 
\hphantom{...}$\mid$\,\,$q$ occurs at $\{ 1\}$, so we are done. \\ 
$01120324$ \\ 
We finally obtain $01022304 \mapsto 01120324$.

The following pictures show the underdiagonal lattice paths of the original sequence $01022304$, and the resulting sequences after each application of $\ph_{\geq \{0\}}$.

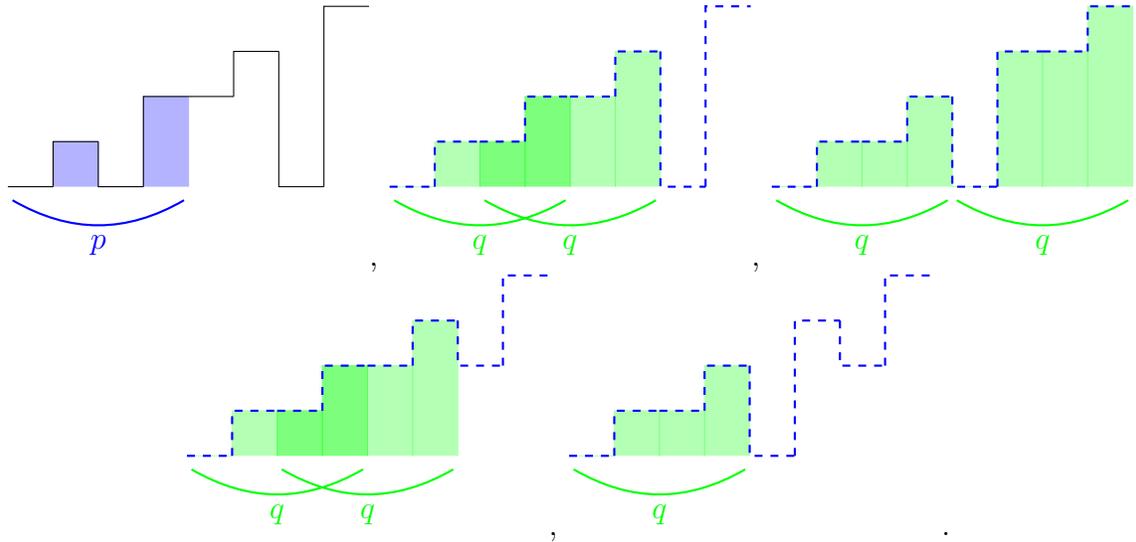
\begin{figure}[H]
    \centering
\begin{tikzpicture}[scale = 0.6] 
\filldraw[blue, opacity=.3] (0, 0) -- (0, 0) -- (1, 0) -- (1,0); 
\filldraw[blue, opacity=.3] (1, 0) -- (1, 1) -- (2, 1) -- (2,0); 
\filldraw[blue, opacity=.3] (2, 0) -- (2, 0) -- (3, 0) -- (3,0); 
\filldraw[blue, opacity=.3] (3, 0) -- (3, 2) -- (4, 2) -- (4,0); 
\draw[thick, blue] (0.1, -.3 ) to[out=-30,in= 210] (3.9, -.3); 
\draw[blue] (2,-1.3) node{$p$}; 
\draw (0,0) -- (1, 0); 
\draw (1,0) -- (1, 1) -- (2, 1); 
\draw (2,1) -- (2, 0) -- (3, 0); 
\draw (3,0) -- (3, 2) -- (4, 2); 
\draw (4,2) -- (4, 2) -- (5, 2); 
\draw (5,2) -- (5, 3) -- (6, 3); 
\draw (6,3) -- (6, 0) -- (7, 0); 
\draw (7,0) -- (7, 4) -- (8, 4); 
\end{tikzpicture},
\begin{tikzpicture}[scale = .6] 
\draw[thick, green] (0.1, -.3 ) to[out=-30,in= 210] (3.9, -.3); 
\draw[green] (2,-1.3) node{$q$}; 
\filldraw[green, opacity=.3] (0, 0) -- (0, 0) -- (1, 0) -- (1,0); 
\filldraw[green, opacity=.3] (1, 0) -- (1, 1) -- (2, 1) -- (2,0); 
\filldraw[green, opacity=.3] (2, 0) -- (2, 1) -- (3, 1) -- (3,0); 
\filldraw[green, opacity=.3] (3, 0) -- (3, 2) -- (4, 2) -- (4,0); 
\draw[thick, green] (2.1, -.3 ) to[out=-30,in= 210] (5.9, -.3); 
\draw[green] (4,-1.3) node{$q$}; 
\filldraw[green, opacity=.3] (2, 0) -- (2, 1) -- (3, 1) -- (3,0); 
\filldraw[green, opacity=.3] (3, 0) -- (3, 2) -- (4, 2) -- (4,0); 
\filldraw[green, opacity=.3] (4, 0) -- (4, 2) -- (5, 2) -- (5,0); 
\filldraw[green, opacity=.3] (5, 0) -- (5, 3) -- (6, 3) -- (6,0); 
\draw[thick, dashed, blue] (0,0) -- (1, 0); 
\draw[thick, dashed, blue] (1,0) -- (1, 1) -- (2, 1); 
\draw[thick, dashed, blue] (2,1) -- (2, 1) -- (3, 1); 
\draw[thick, dashed, blue] (3,1) -- (3, 2) -- (4, 2); 
\draw[thick, dashed, blue] (4,2) -- (4, 2) -- (5, 2); 
\draw[thick, dashed, blue] (5,2) -- (5, 3) -- (6, 3); 
\draw[thick, dashed, blue] (6,3) -- (6, 0) -- (7, 0); 
\draw[thick, dashed, blue] (7,0) -- (7, 4) -- (8, 4); 
\end{tikzpicture},
\begin{tikzpicture}[scale = .6] 
\draw[thick, green] (0.1, -.3 ) to[out=-30,in= 210] (3.9, -.3); 
\draw[green] (2,-1.3) node{$q$}; 
\filldraw[green, opacity=.3] (0, 0) -- (0, 0) -- (1, 0) -- (1,0); 
\filldraw[green, opacity=.3] (1, 0) -- (1, 1) -- (2, 1) -- (2,0); 
\filldraw[green, opacity=.3] (2, 0) -- (2, 1) -- (3, 1) -- (3,0); 
\filldraw[green, opacity=.3] (3, 0) -- (3, 2) -- (4, 2) -- (4,0); 
\draw[thick, green] (4.1, -.3 ) to[out=-30,in= 210] (7.9, -.3); 
\draw[green] (6,-1.3) node{$q$}; 
\filldraw[green, opacity=.3] (4, 0) -- (4, 0) -- (5, 0) -- (5,0); 
\filldraw[green, opacity=.3] (5, 0) -- (5, 3) -- (6, 3) -- (6,0); 
\filldraw[green, opacity=.3] (6, 0) -- (6, 3) -- (7, 3) -- (7,0); 
\filldraw[green, opacity=.3] (7, 0) -- (7, 4) -- (8, 4) -- (8,0); 
\draw[thick, dashed, blue] (0,0) -- (1, 0); 
\draw[thick, dashed, blue] (1,0) -- (1, 1) -- (2, 1); 
\draw[thick, dashed, blue] (2,1) -- (2, 1) -- (3, 1); 
\draw[thick, dashed, blue] (3,1) -- (3, 2) -- (4, 2); 
\draw[thick, dashed, blue] (4,2) -- (4, 0) -- (5, 0); 
\draw[thick, dashed, blue] (5,0) -- (5, 3) -- (6, 3); 
\draw[thick, dashed, blue] (6,3) -- (6, 3) -- (7, 3); 
\draw[thick, dashed, blue] (7,3) -- (7, 4) -- (8, 4); 
\end{tikzpicture}, \\

\begin{tikzpicture}[scale = .6] 
\draw[thick, green] (0.1, -.3 ) to[out=-30,in= 210] (3.9, -.3); 
\draw[green] (2,-1.3) node{$q$}; 
\filldraw[green, opacity=.3] (0, 0) -- (0, 0) -- (1, 0) -- (1,0); 
\filldraw[green, opacity=.3] (1, 0) -- (1, 1) -- (2, 1) -- (2,0); 
\filldraw[green, opacity=.3] (2, 0) -- (2, 1) -- (3, 1) -- (3,0); 
\filldraw[green, opacity=.3] (3, 0) -- (3, 2) -- (4, 2) -- (4,0); 
\draw[thick, green] (2.1, -.3 ) to[out=-30,in= 210] (5.9, -.3); 
\draw[green] (4,-1.3) node{$q$}; 
\filldraw[green, opacity=.3] (2, 0) -- (2, 1) -- (3, 1) -- (3,0); 
\filldraw[green, opacity=.3] (3, 0) -- (3, 2) -- (4, 2) -- (4,0); 
\filldraw[green, opacity=.3] (4, 0) -- (4, 2) -- (5, 2) -- (5,0); 
\filldraw[green, opacity=.3] (5, 0) -- (5, 3) -- (6, 3) -- (6,0); 
\draw[thick, dashed, blue] (0,0) -- (1, 0); 
\draw[thick, dashed, blue] (1,0) -- (1, 1) -- (2, 1); 
\draw[thick, dashed, blue] (2,1) -- (2, 1) -- (3, 1); 
\draw[thick, dashed, blue] (3,1) -- (3, 2) -- (4, 2); 
\draw[thick, dashed, blue] (4,2) -- (4, 2) -- (5, 2); 
\draw[thick, dashed, blue] (5,2) -- (5, 3) -- (6, 3); 
\draw[thick, dashed, blue] (6,3) -- (6, 2) -- (7, 2); 
\draw[thick, dashed, blue] (7,2) -- (7, 4) -- (8, 4); 
\end{tikzpicture},
\begin{tikzpicture}[scale = .6] 
\draw[thick, green] (0.1, -.3 ) to[out=-30,in= 210] (3.9, -.3); 
\draw[green] (2,-1.3) node{$q$}; 
\filldraw[green, opacity=.3] (0, 0) -- (0, 0) -- (1, 0) -- (1,0); 
\filldraw[green, opacity=.3] (1, 0) -- (1, 1) -- (2, 1) -- (2,0); 
\filldraw[green, opacity=.3] (2, 0) -- (2, 1) -- (3, 1) -- (3,0); 
\filldraw[green, opacity=.3] (3, 0) -- (3, 2) -- (4, 2) -- (4,0); 
\draw[thick, dashed, blue] (0,0) -- (1, 0); 
\draw[thick, dashed, blue] (1,0) -- (1, 1) -- (2, 1); 
\draw[thick, dashed, blue] (2,1) -- (2, 1) -- (3, 1); 
\draw[thick, dashed, blue] (3,1) -- (3, 2) -- (4, 2); 
\draw[thick, dashed, blue] (4,2) -- (4, 0) -- (5, 0); 
\draw[thick, dashed, blue] (5,0) -- (5, 3) -- (6, 3); 
\draw[thick, dashed, blue] (6,3) -- (6, 2) -- (7, 2); 
\draw[thick, dashed, blue] (7,2) -- (7, 4) -- (8, 4); 
\end{tikzpicture} .
    \caption{Underdiagonal lattice path diagrams for 01022304, 01122304, 01120334, 01122324, 01120324.}
\end{figure}

\end{example}

Remark: {The sequences for which the number of required applications of $\ph_\geq$ and $\ph_\geq\inv$ is maximized are not unique. For example, when $n = 8$, the following two computations both result in 19 function calls: $01120314 \mapsto 01021304$, $01130537 \mapsto 01031517$.
}

For $01120314 \mapsto 01021304$, we can see what looks like a direct relationship between the input and output. 
A larger example can be found in Appendix \ref{extendedexample}. This class of examples motivated the development of the algorithm in the following section.

\subsection{Reciprocal relations}\label{sec:superduper}

While the direct description of the previous bijection is not straightforward to deduce, we can actually produce a stronger result. Recall the following definition:

\begin{definition}
    Let $p$ and $q$ be consecutive patterns. We say that $p$ and $q$ are \textit{reciprocal} if for all pairs $S,T \ci [n]$, 
$$
|\{\r \in I_n : \Em(p,\r) = S, \Em(q,\r) = T\}| = |\{\r\in I_n : \Em(p,\r) = T, \Em(q,\r) = S\}|.
$$
In other words, we can swap the occurrences of $p$ and $q$ in an inversion sequence. 
When $p$ and $q$ satisfy this relation, we denote this by $p \recip
 q$. 
\end{definition}

It turns out that the consecutive patterns $\underline{0102}$ and $\underline{0112}$ are reciprocal as well.
Algorithm \ref{alg:sd} takes a sequence $\seq$ and switches all instances of $p = \underline{0102}$ and $q = \underline{0112}$ in $\seq$.

\begin{algorithm}
\label{sdalgorithm}
\caption{Bijection to show that $p = \underline{0102}$ and $q = \underline{0112}$ are reciprocal}\label{alg:sd}
\begin{algorithmic}

\Require sequence $\seq$ with occurrences of $p$ and $q$ at positions $S$ and $T$, respectively.

\Ensure sequence with occurrences of $q$ and $p$ at positions $S$ and $T$, respectively.

\State $\Em(p) \gets $ indices of occurrences of $p$ in $\seq$ \;

\State $\Em(q) \gets $ indices of occurrences of $q$ in $\seq$ \;

\State $\last \gets$ null \;

\For{each position $i$ in $\seq$}

\State $\newEm(p) \gets$ current indices of occurrences of $p$ in $\seq$\;

\State  $\newEm(q) \gets$ current indices of occurrences of $q$ in $\seq$ \;
        
    \If{$i-2 \in \Em(p)$} 
    
    \Comment{The $i$th entry is the third position of an original occurrence of $p$ in $\seq$.}

\State $\last \gets $ \seq[i]

\Comment{Store whatever is currently in position $i$ into $\last$.}

\State $\seq[i] \gets \seq[i-1]$

\Comment{Copy the entry directly to the left into position $i$}

\Comment{(this changes the original $p$ to a $q$).}

\ElsIf{ $i-2 \in \Em(q)$}

\Comment{The $i$th entry is the third position of an original occurrence of $q$ in $\seq$.}

\State $\last \gets $ \seq[i]

\Comment{Store whatever is currently in position $i$ into $\last$.}

\State $\seq[i] \gets \seq[i-2]$

\Comment{Copy whatever digit is two to the left into position $i$} 

\Comment{(this changes the original $q$ to a $p$).}

\Comment{Now if the $i$th entry of $\seq$ is the third position of a new (unwanted) occurrence of $p$ or $q$ in $\seq$...}

\ElsIf{$i-2 \in \newEm(p)$}

\State $\texttt{temp} \gets \seq[i]$

\State $\seq[i] \gets \last$
\State $\last \gets \texttt{temp}$

\Comment{Copy whatever is in $\last$ into position $i$, and store the old digit in $\last$.}

\ElsIf{$i-2 \in \newEm(q)$}

\State $\texttt{temp} \gets \seq[i]$

\State $\seq[i] \gets \last$
\State $\last \gets \texttt{temp}$

\EndIf
\EndFor\\
\Return \texttt{seq}

\end{algorithmic}
\end{algorithm}

After collecting several upcoming results about the algorithm, we see that applying the algorithm twice can be visualized by the following diagrams for each case:

\begin{figure}[H]
    \centering

\begin{minipage}{.32\linewidth}
\begin{center}
    
    if $\r$ has an occurrence of \\ $p$ at $i$:\\
\vspace{3mm}
\fbox{
\begin{tikzpicture}[scale=1]
\draw
(0,0) node (e1){$\r_i$}
(1,0) node(e2){$\r_{i+1}$}
(2,0) node(e3){$\r_{i+2}$}
(3,0) node(e4){$\r_{i+3}$}

(0,-1.5) node(e'1){$\r_i'$}
(1,-1.5) node(e'2){$\r_{i+1}'$}
(2,-1.5) node(e'3){$\r_{i+2}'$}
(3,-1.5) node(e'4){$\r_{i+3}'$}

(0,-3) node(e''1){$\r_i''$}
(1,-3) node(e''2){$\r_{i+1}''$}
(2,-3) node(e''3){$\r_{i+2}''$}
(3,-3) node(e''4){$\r_{i+3}''$}

;

\path[line width = 1mm, dotted, magenta]
(e1) edge[bend right=40] node [right] {} (e3)
(e''2) edge node [right] {} (e'3)

;

\path[line width = 1mm, ->,>=stealth, cyan]
(e''1) edge[bend left=40] node [right] {} (e''3)
(e2) edge node [right] {} (e'3)
;

\end{tikzpicture}
}
   
    \end{center}
\end{minipage}\hfill
\begin{minipage}{.32\linewidth}
    \begin{center}

if $\r$ has an occurrence of \\ $q$ at $i$:\\
\vspace{3mm}
\fbox{
\begin{tikzpicture}[scale=1]
\draw
(0,0) node (e1){$\r_i$}
(1,0) node(e2){$\r_{i+1}$}
(2,0) node(e3){$\r_{i+2}$}
(3,0) node(e4){$\r_{i+3}$}

(0,-1.5) node(e'1){$\r_i'$}
(1,-1.5) node(e'2){$\r_{i+1}'$}
(2,-1.5) node(e'3){$\r_{i+2}'$}
(3,-1.5) node(e'4){$\r_{i+3}'$}

(0,-3) node(e''1){$\r_i''$}
(1,-3) node(e''2){$\r_{i+1}''$}
(2,-3) node(e''3){$\r_{i+2}''$}
(3,-3) node(e''4){$\r_{i+3}''$}

;

\path[line width = 1mm, dotted, magenta]
(e3) edge node [right] {} (e'2)

(e'1) edge[bend right=40] node [right] {} (e'3)
;

\path[line width = 1mm, ->,>=stealth, cyan]
(e'1) edge[bend left=40] node [right] {} (e'3)
(e'2) edge node [right] {} (e''3)
;

\end{tikzpicture}
}
   
    \end{center}
\end{minipage}\hfill
\begin{minipage}{.32\linewidth}
    \begin{center}
if an ``extra'' occurrence of\\ $p$ appears at $i$:\\
\vspace{3mm}
\fbox{
\begin{tikzpicture}[scale=1]
\draw
(0,0) node (e1){$\r_i$}
(1,0) node(e2){$\r_{i+1}$}
(2,0) node(e3){$\r_{i+2}$}
(3,0) node(e4){$\r_{i+3}$}

(0,-1.5) node(e'1){$\r_i'$}
(1,-1.5) node(e'2){$\r_{i+1}'$}
(2,-1.5) node(e'3){$\r_{i+2}'$}
(3,-1.5) node(e'4){$\r_{i+3}'$}

(0,-3) node(e''1){$\r_i''$}
(1,-3) node(e''2){$\r_{i+1}''$}
(2,-3) node(e''3){$\r_{i+2}''$}
(3,-3) node(e''4){$\r_{i+3}''$}

;

\path[line width = 1mm, dotted, magenta]
(e'1) edge node [right] {} (e3)
(e''1) edge node [right] {} (e'3)

;

\path[line width = 1mm, ->,>=stealth, cyan]
(e1) edge node [right] {} (e'3)
(e'1) edge node [right] {} (e''3)
;

\end{tikzpicture}
   }
    \end{center}

\end{minipage}
    \caption{Diagrams of replacements made by Algorithm \ref{alg:sd} in each case of an occurrence at $i$.}
\end{figure}
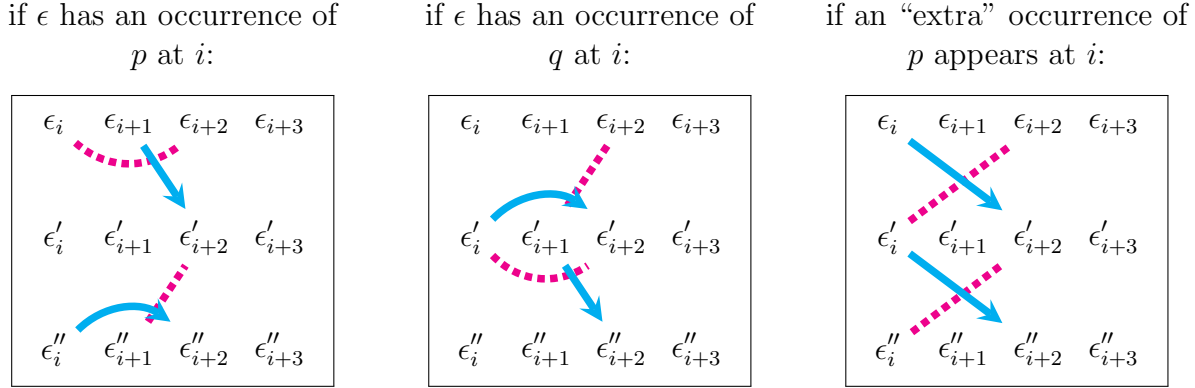

The dotted edges in the above diagrams show equalities between elements that cause a replacement to be made (additional required inequalities are not indicated). The solid edges show how entries in the sequence are copied to other positions in the sequence. So each edge denotes an equality of sequence entries.

\begin{samepage}
    
\begin{example}
\,
\begin{figure}[H]
    \centering
    \begin{tikzpicture}[scale=1]
\draw
(-1,0) node {$\r$:}
(0,0) node (e1){0}
(1,0) node(e2){1}
(2,0) node(e3){1}
(3,0) node(e4){2}
(4,0) node(e5){2}
(5,0) node(e6){3}
(6,0) node(e7){0}
(7,0) node(e8){4}
(8,0) node(e9) {2}
(9,0) node(e10){5}
(10,0) node(e11){2}
(11,0) node (e12) {6}

(-1,-1.4) node {$\r'$:}
(0,-1.5) node(e'1){0}
(1,-1.5) node(e'2){1}
(2,-1.5) node(e'3){0}
(3,-1.5) node(e'4){2}
(4,-1.5) node(e'5){0}
(5,-1.5) node(e'6){3}
(6,-1.5) node(e'7){2}
(7,-1.5) node(e'8){4}
(8,-1.5) node(e'9) {0}
(9,-1.5) node(e'10){5}
(10,-1.5) node(e'11){5}
(11,-1.5) node (e'12) {6}

(-1,-2.9) node {$\r''$:}
(0,-3) node (e''1){0}
(1,-3) node(e''2){1}
(2,-3) node(e''3){1}
(3,-3) node(e''4){2}
(4,-3) node(e''5){2}
(5,-3) node(e''6){3}
(6,-3) node(e''7){0}
(7,-3) node(e''8){4}
(8,-3) node(e''9) {2}
(9,-3) node(e''10){5}
(10,-3) node(e''11){2}
(11,-3) node (e''12) {6}
;

\path[line width = .7mm, dotted, magenta]
(e3) edge node [right] {} (e'2)
(e'4) edge node [right] {} (e5)
(e'5) edge node [right] {} (e7)
(e'7) edge node [right] {} (e9)
(e9) edge[bend right=40] node [right] {} (e11)

(e'1) edge[bend right=40] node [right] {} (e'3)
(e'3) edge[bend right=40] node [right] {} (e'5)
(e''5) edge node [right] {} (e'7)
(e''7) edge node [right] {} (e'9)
(e''10) edge node [right] {} (e'11)
;

\path[line width = .7mm, ->,>=stealth, cyan]
(e'1) edge[bend left=40] node [right] {} (e'3)
(e'3) edge[bend left=40] node [right] {} (e'5)
(e5) edge node [right] {} (e'7)
(e7) edge node [right] {} (e'9)
(e10) edge node [right] {} (e'11)

(e'2) edge node [right] {} (e''3)
(e'4) edge node [right] {} (e''5)
(e'5) edge node [right] {} (e''7)
(e'7) edge node [right] {} (e''9)
(e''9) edge[bend left=40] node [right] {} (e''11)
;

\end{tikzpicture}
    \caption{Diagram of replacements made when applying Algorithm \ref{alg:sd} twice to the sequence $\r = 011223042526$.}
\end{figure}
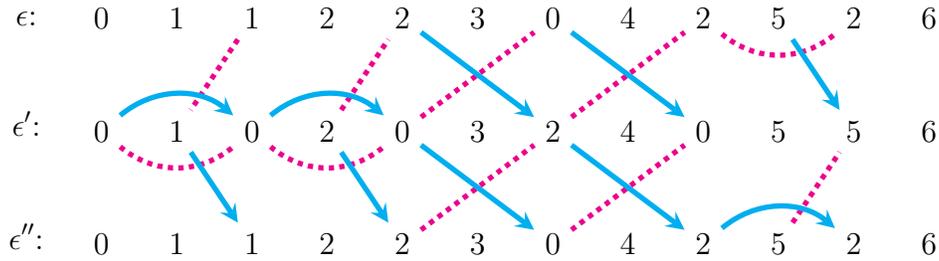
\end{example}

\end{samepage}

The following results will show that this algorithm gives a bijection between $\{ \r\in I_n : \Em(p,\r) = S \text{ and } \Em(q,\r) = T\}$ and $\{ \r\in I_n : \Em(q,\r) = S \text{ and } \Em(p,\r) = T \}$ for all $S,T \ci [n]$. First, we show that given an inversion sequence as the input, the algorithm will always output an inversion sequence.

\begin{prop}
\label{makesinvseq}
Let $\r = \r_1 \r_2 \cdots \r_n$ be an inversion sequence, and let $\r'$ be the resulting sequence after applying Algorithm \ref{alg:sd} to $\r$. Then $\r'$ is also an inversion sequence.
\end{prop}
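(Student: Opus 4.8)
The plan is to prove the claim by \emph{induction on the position $i$}, processing the entries of $\seq$ in the same left-to-right order used by Algorithm~\ref{alg:sd}. The induction hypothesis will be: after the loop has finished iteration $i-1$, the finalized entries $\seq[1],\dots,\seq[i-1]$ satisfy $0\le\seq[k]<k$. The base cases $i=1,2$ are immediate, since $i-2\le 0$ is never an index of a pattern occurrence, so the algorithm leaves $\seq[1]=\r_1=0$ and $\seq[2]=\r_2$ untouched.

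For the inductive step, I would observe that after iteration $i$ the value $\seq[i]$ is one of exactly four quantities, according to which branch (if any) fires: (a) the original value $\r_i$, if no branch fires; (b) $\seq[i-1]$, if $i-2\in\Em(p)$; (c) $\seq[i-2]$, if $i-2\in\Em(q)$; or (d) the current value of $\last$, if $i-2\in\newEm(p)$ or $i-2\in\newEm(q)$. Cases (a)--(c) are handled by the induction hypothesis together with the fact that $\r$ is an inversion sequence: $\r_i<i$, $\seq[i-1]<i-1<i$, and $\seq[i-2]<i-2<i$, and all three are $\ge 0$; the indices are in range because the branches in (b) and (c) require $i-2\ge 1$. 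The one case needing real care is (d), and this is where I expect the main work.

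To handle (d), I would establish the following invariant about $\last$: \emph{whenever $\last$ is assigned during iteration $k$, it receives the original value $\r_k$}. Indeed, in the $\Em(p)$ and $\Em(q)$ branches the assignment $\last\gets\seq[k]$ is executed before $\seq[k]$ is overwritten, and in the $\newEm$ branches $\last$ is set to $\texttt{temp}$, which was just read off $\seq[k]$ before $\seq[k]$ was overwritten; since position $k$ is only ever touched during iteration $k$, in both situations $\seq[k]$ still holds $\r_k$. Moreover, within a single iteration the value of $\last$ is \emph{read} (in the $\newEm$ branches) strictly before it is \emph{written}, so when branch (d) reads $\last$ at iteration $i$ it reads a value assigned at some earlier iteration $k<i$; and $\last$ is non-null there, because branch (d) fires only when a genuinely new occurrence of $p$ or $q$ has its third entry at $i$, which forces one of positions $i-2,i-1$ to have already been modified (positions $i,i+1$ still hold original values at the start of iteration $i$), hence some branch fired earlier and $\last$ was assigned. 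Combining these, in case (d) we get $\seq[i]=\last=\r_k$ for some $k<i$, so $0\le\r_k<k<i$ since $\r$ is an inversion sequence.

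Assembling the four cases completes the inductive step and hence the proof. The moral worth recording is that Algorithm~\ref{alg:sd} never introduces a \emph{new} numerical value: every output entry is either an original entry of $\r$ or a copy of an entry sitting at a strictly earlier position, so it can never violate $\seq[i]<i$. The only genuinely delicate point — the step I expect to demand the most careful bookkeeping — is verifying the $\last$ invariant, in particular that within each iteration $\last$ is always read before being rewritten, and that each assignment to $\last$ grabs an as-yet-untouched original entry from an earlier position.
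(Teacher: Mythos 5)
Your proof is correct and takes essentially the same approach as the paper: the paper's short argument is precisely the observation that every replaced entry is a copy of an entry from a strictly earlier position (hence bounded above by that smaller index, so $0\le \r_j' < j$ is preserved), which is the content of your cases (b)--(d). Your induction on position and the invariant that $\last$ always stores an original entry $\r_k$ with $k<i$ just make explicit the bookkeeping that the paper leaves implicit.
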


\begin{proof}
    Since $\r$ is an inversion sequence, we know that $0 \leq \r_i < i$ for all $i$. If Algorithm \ref{alg:sd} replaces any entries of $\r$, the entries will be replaced with entries from earlier in the sequence. That is, if $\r_j$ is replaced, then $\r_j' = \r_i$ for some $i<j$. Then
    $$
    0 \leq \r_j' = \r_i<i <j,
    $$
    so $0 \leq \r_j'<j$ for all $j$. 
    Also, if $\r_j$ is not replaced, we have $0 \leq \r_j' = \r_j <j$.
    Thus $\r'$ is also an inversion sequence.
\end{proof}

\begin{lemma}\label{lem0} Let $\r$ be an initial sequence to which the algorithm is applied. Suppose $i$ is an occurrence of $p$ or $q$ in the sequence at any step in the algorithm. Then among the set of entries $
\{\r_i, \r_{i+1}, \r_{i+2}, \r_{i+3}\}$, only $\r_i$ or $\r_{i+2}$ could have been changed from the original sequence. \end{lemma}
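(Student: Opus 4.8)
The plan is to isolate two independent ingredients and then combine them by tracking what Algorithm~\ref{alg:sd} actually does.

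First I would determine \emph{which} entries the algorithm can ever alter. The only place an entry is written is the line \texttt{seq[i] <- ...} inside iteration $i$, and in each of the four branches that write is guarded by the requirement that $i-2$ is, at that moment, an occurrence of $p$ or $q$ --- either recorded in $\Em(p)\cup\Em(q)$ from the input, or detected in $\newEm(p)\cup\newEm(q)$ from the current sequence. Hence an entry $\seq[j]$ can differ from $\r_j$ only if, when iteration $j$ runs, position $j-2$ hosts an occurrence of $p$ or $q$; equivalently, every position the algorithm touches is the \emph{third} entry of some occurrence present when it is touched. I would also record the small bookkeeping fact that whenever the algorithm acts on a position $j$ it leaves \texttt{last} holding the pre-overwrite value $\r_j$ (directly in the first two branches, via \texttt{temp} in the last two); this is what makes the ``extra occurrence'' branches tractable.

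Second I would prove the purely combinatorial fact that occurrences of $p$ and $q$ cannot begin at two consecutive positions of \emph{any} word. If $k$ and $k+1$ both hosted occurrences of $p=\underline{0102}$ or $q=\underline{0112}$, reading off the equalities and strict inequalities dictated by the reductions $0102$ and $0112$ gives a contradiction in each of the four configurations --- e.g.\ two overlapping $p$'s force $\r_{k+1}<\r_{k+3}$ and $\r_{k+1}=\r_{k+3}$ at once, while a $q$ at $k$ together with any occurrence at $k+1$ forces $\r_{k+1}=\r_{k+2}$ and $\r_{k+1}<\r_{k+2}$ at once. This is a short finite check.

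Now fix a step and an occurrence of $p$ or $q$ at position $i$; its four entries occupy positions $i,i+1,i+2,i+3$. By the first ingredient, $\seq[i+1]$ can have changed only if $i-1$ hosted an occurrence at iteration $i+1$, and $\seq[i+3]$ only if $i+1$ hosted one at iteration $i+3$. When that neighbouring occurrence is an original one ($\Em$-branch) the triggered assignment is explicit --- $\seq[i+1]\gets\seq[i]$ or $\seq[i+1]\gets\seq[i-1]$ on the left, and symmetrically $\seq[i+3]\gets\seq[i+2]$ or $\seq[i+3]\gets\seq[i+1]$ on the right --- and each such assignment pins down a value relation that a genuine occurrence of $\underline{0102}$ or $\underline{0112}$ at $i$ cannot satisfy, since both patterns demand $\seq[i]<\seq[i+1]$ as well as $\seq[i+1]<\seq[i+3]$ and $\seq[i+2]<\seq[i+3]$. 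The only configuration that needs more than a one-line check is $\seq[i+1]\gets\seq[i-1]$ coming from an original $q$ at $i-1$: there one first notes, again via the no-two-consecutive fact, that $\seq[i]$ itself was untouched, so $\seq[i]=\r_i$ while the constraints at $i-1$ give $\r_{i-1}<\r_i$, which forces the contradiction back onto an earlier occurrence. For the $\newEm$-branches, where the entry is overwritten from \texttt{last}, I would use that a new occurrence at $i-1$ or $i+1$ can only have been created by a prior modification of the first or third entry of an adjacent occurrence, and invoke the conclusion for that adjacent occurrence at a strictly earlier iteration. To make this well founded I would package the whole argument as an induction on the iteration count, carrying the invariant: at every step, every position currently hosting an occurrence of $p$ or $q$ has its second and fourth entries still equal to the original values.

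The routine parts are the two ingredients and the $\Em$-branch arithmetic. The genuine obstacle is the timing: an occurrence visible at one step may involve entries that were modified at an earlier step for the sake of an occurrence no longer present, so one cannot simply apply ``no two consecutive occurrences'' to a single snapshot. I expect the bulk of the work to be in the $\newEm$-branches --- tracing how a modification of the first or third entry inside a run of overlapping occurrences propagates --- and in arranging the induction so that each appeal to a neighbouring occurrence is to a strictly earlier stage.
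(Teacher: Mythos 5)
You correctly isolate the two facts on which the paper's proof rests: the algorithm only ever writes into a position that is (originally or currently) the third entry of an occurrence of $p$ or $q$, and occurrences of $\underline{0102}$ and $\underline{0112}$ cannot begin at consecutive positions of a single word. The paper stops there: since every written position is the third entry of some original or transient occurrence, and such a position can only reappear inside another occurrence as its \emph{first} entry, the second and fourth entries of an occurrence at $i$ are never written, which is the claim. You decline to combine the two facts this directly --- on the legitimate ground that the adjacency fact is a single-snapshot statement while the two occurrences involved may live at different stages of the run --- and instead set up a value-tracking induction. That is where the gap lies.

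The induction you propose carries only the invariant that the second and fourth entries of any position currently hosting an occurrence are still original, and this is not strong enough to close the cases your analysis generates. Already in the case you single out (a write at $i+1$ triggered by an original $q$ at $i-1$): if the first entry $\seq[i-1]$ was itself replaced at an earlier iteration, the value copied into position $i+1$ is that replacement value, and to contradict $\seq[i]<\seq[i+1]$ for a later occurrence at $i$ you need to know that a replaced entry is strictly smaller than its right neighbour. That is exactly Lemma~\ref{lem1}(ii) of the paper, which is proved \emph{after} this lemma and \emph{using} it; the same bound is needed throughout the $\newEm$-branches, where the written value is whatever sits in $\last$ and must be compared against the entries of the occurrence at $i$. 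Unfolding the chain of earlier replacements just reproduces the proof of that bound, so your induction must carry it as an additional invariant (a joint induction proving both statements simultaneously), or you must argue differently; as written, the plan defers precisely the step where this missing tool is required. A smaller inaccuracy: in the original-$q$-at-$i-1$ case you assert that $\seq[i]$ is untouched ``via the no-two-consecutive fact,'' but the potential trigger at $i-2$ may be a transient occurrence in a later snapshot, so the single-snapshot adjacency fact does not apply directly; a short direct value comparison does rule it out, but not by that citation.
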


\begin{proof}
The algorithm only makes replacements at the third position of either occurrences of $p$ or $q$ that appeared in the original sequence (even if they are not currently occurrences when the algorithm reaches that position) or occurrences of $p$ or $q$ that are created as entries are replaced.

The consecutive patterns $\underline{0102}$ and $\underline{0112}$ can only overlap themselves or each other in one or two entries. In particular, the third entry of one pattern occurrence could either not be part of another pattern occurrence, or be the first entry of another pattern occurrence. Since these are the only entries in the sequence which are changed, and these are the only cases for where they can appear in a pattern occurrence, the result follows.
\end{proof}

We will refer to the occurrences of $p$ or $q$ that appeared in the original sequence as \textit{original} $p$ or $q$.  A position satisfying this is considered an original $p$ or $q$ even if it is not an occurrence of $p$ or $q$ in the sequence at some point during the algorithm. We say an occurrence is a \textit{transient} $p$ or $q$ if it is a non-original occurrence of $p$ or $q$ that appears during the algorithm as a result of replacements. 

That is, if $\r$ is the initial sequence and $\r'$ is the sequence resulting from applying the algorithm to $\r$, we say there is
\begin{itemize}
    \item an \textit{original} occurrence of $p$ at $i$ if $\r_i = \r_{i+2}$, and $\r_i<\r_{i+1}<\r_{i+3}$,
    \item an \textit{original} occurrence of $q$ at $i$ if $\r_i <\r_{i+1}= \r_{i+2} <\r_{i+3}$,
    \item  a \textit{transient} occurrence of $p$ at $i$ if $i$ is not an original occurrence of $p$, but $\r_i' = \r_{i+2}$, and $\r_i'<\r_{i+1}<\r_{i+3}$, and
    \item a \textit{transient} occurrence of $q$ at $i$  if $i$ is not an original occurrence of $q$, but $\r_i' <\r_{i+1}= \r_{i+2} <\r_{i+3}$.
\end{itemize} 

Note: It follows from these definitions that if there is a transient $p$ or $q$ in position $i$, then $\r_i' \neq \r_i$.
We will also show that the non-original occurrences of $p$ or $q$ are indeed ``transient'' as these occurrences are removed later in the algorithm.

\begin{lemma}\label{lem1} Let $\r$ be an inversion sequence to which the algorithm is applied, and let $\r'$ be the resulting sequence. If an entry $\r_i$ in the sequence is replaced during 
the algorithm, 
\begin{enumerate}[(i)]
    \item $\r_i < \r_{i+1}$, and
    \item the replacement entry $\r_i'$ is less than $\r_{i+1}$.
\end{enumerate} \end{lemma}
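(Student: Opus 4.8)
The plan is to prove parts (i) and (ii) together by strong induction on the loop index $i$, after recording two bookkeeping facts about Algorithm~\ref{alg:sd}. First, position $k$ of \seq\ is written only at step $k$ of the main loop, and at most once; hence at step $i$ the entries at positions $i+1,i+2,\dots$ are still those of $\r$, and position $i$ holds $\r_i$ up until the moment it is (possibly) rewritten at that step. Second, each of the four branches that rewrites an entry begins by copying the entry currently at that position -- which, by the first fact, is its original value -- into \last, and \last\ is untouched at every other step; so when step $i$ consults \last\ it contains $\r_j$, where $j<i$ is the position of the most recent replacement.

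Part (i) is essentially free. If $\r_i$ is replaced at step $i$ then the loop has entered one of the four branches, so $i-2$ starts an occurrence of $p=\underline{0102}$ or $q=\underline{0112}$ -- in the original sequence for the two $\Em$ branches, in the current sequence for the two $\newEm$ branches. In both $0102$ and $0112$ the third entry is strictly less than the fourth, and positions $i,i+1$ still carry their original values at step $i$, so this reads off as $\r_i<\r_{i+1}$.

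For part (ii) I would split on which branch did the replacing. In the two ``original'' branches, $i-2\in\Em(p)$ (resp.\ $\Em(q)$) forces the original window $\r_{i-2}\r_{i-1}\r_i\r_{i+1}$ to reduce to $0102$ (resp.\ $0112$), and the entry written into position $i$ is the current entry at position $i-1$ (resp.\ $i-2$); if that position is still untouched at step $i$ its value is original and the reduction inequalities put it below $\r_{i+1}$ at once, while if it was touched -- which by the first bookkeeping fact can only have happened at step $i-1$ (resp.\ $i-2$) -- the inductive part (ii) at that step bounds it by the original entry immediately to its right, which the same reduction inequalities again put below $\r_{i+1}$. In the two ``transient'' branches the replacement is a swap, so $\r_i'=\last=\r_j$ for the most recent replacement position $j$; since $i-2$ is a current but not an original occurrence, the window at $i-2$ must have been altered, and as positions $i,i+1$ are untouched at step $i$ this means one of $i-2,i-1$ was modified, so -- replacements happening at strictly increasing positions -- $j\in\{i-2,i-1\}$. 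Now part (i) applied at step $j$ gives $\r_j<\r_{j+1}$, and because position $j+1$ (which is $i$ if $j=i-1$, and is $i-1$ -- necessarily untouched, or else $j$ would be $i-1$ -- if $j=i-2$) still holds its original value at step $i$, the fact that $i-2$ is a current occurrence of $p$ or $q$ forces $\r_{j+1}<\r_{i+1}$ (second, resp.\ third, entry below the fourth in both $0102$ and $0112$). Hence $\r_i'=\r_j<\r_{j+1}<\r_{i+1}$. The base cases are vacuous for $i\leq 2$ and, for $i=3$, allow only the two original branches, which appeal to no earlier instance of the lemma.

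The step I expect to be the real work is the transient case: one has to be careful about the distinction between the fixed sets $\Em(p),\Em(q)$ and the moving sets $\newEm(p),\newEm(q)$, pin down exactly what \last\ holds and argue that its contents originated at position $i-2$ or $i-1$, and then check which neighbouring positions remain untouched so that the $\underline{0102}/\underline{0112}$ reduction inequalities can legitimately be applied to entries of $\r$ rather than to current entries. Everything rests on the left-to-right, write-once structure of the algorithm, so that needs to be stated cleanly at the outset.
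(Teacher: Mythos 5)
Your proposal is correct and follows essentially the same route as the paper's proof: part (i) is read off from the branch conditions, and part (ii) is proved by induction on the position, splitting into original versus transient occurrences and using that \last{} holds the original value of the most recently replaced position together with the pattern inequalities applied to the still-untouched entries to the right. The only cosmetic difference is that where the paper invokes Lemma~\ref{lem0} to rule out changes to the second entry of an occurrence, you handle that (in fact vacuous) possibility directly via the inductive hypothesis and part (i), which is harmless.
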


\begin{proof}
(i): The third entry in an occurrence of either pattern will be less than the fourth entry. The algorithm only makes replacements at the third position of either original occurrences of $p$ or $q$, or transient occurrences of $p$ or $q$. 
In the case of an original occurrence of $p$ or $q$ at index ${i-2}$,
we have $\r_{i} < \r_{i+1}$.
In the case of a transient occurrence of $p$ or $q$ at index ${i-2}$, we have $\r_{i} < \r_{i+1}$.

(ii): The first time that an entry in the sequence will be replaced is in the first original occurrence of $p$ or $q$. Suppose the first such occurrence is at position $j$. Since no replacements have been made so far, we know the original $p$ or $q$ is currently unchanged. So $\r_j < \r_{j+3}$, and $\r_{j+1} < \r_{j+3}$. The entry in position $j$ or ${j+1}$ will replace the entry in position $j+2$. So the result holds for the position of the first (leftmost) replacement in the algorithm. 

We will proceed by induction. Suppose the result holds for all $k$ such that $0 \leq k < i$.

The algorithm will only replace an entry if it is in the third position of either an original occurrence of $p$ or $q$, or a transient occurrence of $p$ or $q$ in the current sequence.

Suppose $i$ is the third position of an original $p$ or $q$.
Either the original $p$ or $q$ in that position is completely unchanged, or the first digit was changed, by Lemma \ref{lem0}. 

If the original $p$ or $q$ is completely unchanged, then we have that $\r_{i-2}$ and $\r_{i-1}$ are both less than $ \r_{i+1}$, and the value of one of them will replace $\r_i$. In either case, the replaced digit will still be less than $\r_{i+1}$.

If the first entry of the original $p$ or $q$ was changed, then by the inductive hypothesis, we know that 
$\r_{i-2}'<\r_{i-1}$. 
Also, at the point in the algorithm when the first entry $\r_{i-2}$ was replaced, the second, third and fourth entries of the occurrence were all still unchanged. So the second entry is still less than the fourth, and the (replaced) first entry is less than the second, so the first entry is also less than the fourth. Then when the third digit is changed by replacing it with the first or second entry, the replacement is still less than the entry directly to its right.

Otherwise suppose that $i$ is the third position of a transient occurrence of $p$ or $q$. In this case, whatever is held in the variable $\last$ will replace the entry in position $i$. 

Since $i-2$ is a transient occurrence of $p$ or $q$, we know by Lemma \ref{lem0} that this occurrence must have resulted from a replacement in position $i-2$. 

Then $\r_{i-2}$ is now stored in $\last$.
By Lemma \ref{lem1}(i), since $\r_{i-2}$ was replaced, we have that 
$\r_{i-2} < \r_{i-1}$.
That is, the entry stored in $\last$ is less than $\r_{i-1}$.

Since $i-2$ is a transient occurrence of $p$ or $q$, it follows that 
$\r_{i-1} <\r_{i+1}$. So, the entry stored in $\last$ is less than $\r_{i+1}$. Since $\r_i$ is replaced with whatever entry is in $\last$, the result holds in this case as well.
\end{proof}

\begin{cor}
\label{lem1cor} In Algorithm \ref{alg:sd}, it is not possible to generate a transient $q$.
\end{cor}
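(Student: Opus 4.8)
The plan is to argue by contradiction, leaning on two structural features of the algorithm: each entry of $\seq$ is altered at most once (at its own step of the \textbf{for} loop), and the third position of a transient occurrence, when it is rewritten, receives a value strictly below that occurrence's second position. Suppose a transient $q$ does get generated; let $s$ be the \emph{earliest} step at which a transient $q$ is present, and let $i$ be its position. Since only $\seq[s]$ changes at step $s$, and since $i$ is an occurrence of $q$ at step $s$, Lemma~\ref{lem0} forces $\r_{i+1}$ and $\r_{i+3}$ to still equal their original values; hence the occurrence at $i$ can have acquired the pattern $\underline{0112}$ only through a change at position $i$ or at position $i+2$, i.e. $s \in \{i, i+2\}$.

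If $s = i$, then $\r_i$ was just replaced while $\r_{i+1}, \r_{i+2}, \r_{i+3}$ are still original; being a $q$ at $i$ means $\r_i' < \r_{i+1} = \r_{i+2} < \r_{i+3}$, and Lemma~\ref{lem1}(i) gives $\r_i < \r_{i+1}$ for the original entry as well, so the original sequence already reads $\underline{0112}$ at positions $i, \dots, i+3$ — making $i$ an original occurrence of $q$, contrary to transience. If $s = i+2$, the replacement of $\r_{i+2}$ was triggered according to which of $\Em(p)$, $\Em(q)$, $\newEm(p)$, $\newEm(q)$ contains $i$. The first two force $i$ to be an original pattern site, again contradicting transience. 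If $i \in \newEm(p)$, then at the start of step $i+2$ there is a transient $p$ at $i$ whose third entry is $\r_{i+2}$, and the sharper estimate from the transient case inside the proof of Lemma~\ref{lem1}(ii) gives $\r_{i+2}' < \r_{i+1}$, incompatible with the equality $\r_{i+1} = \r_{i+2}$ demanded by $\underline{0112}$. If $i \in \newEm(q)$, then (since $i \notin \Em(p)\cup\Em(q)$) there was already a transient $q$ at $i$ at the end of step $i+1$, contradicting the minimality of $s$. Every case fails, so no transient $q$ can be generated.

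The step I expect to be the real obstacle is the $i \in \newEm(p)$ case: the \emph{statement} of Lemma~\ref{lem1}(ii) only bounds a rewritten third entry below the \emph{fourth} entry of its occurrence, whereas here I need it bounded strictly below the \emph{second} entry — and it is exactly this dichotomy (the pattern $\underline{0112}$ has equal second and third entries, but $\underline{0102}$ does not) that explains why transient copies of $q$, unlike transient copies of $p$, cannot arise. That sharper bound is already proved in passing inside the proof of Lemma~\ref{lem1}(ii), so the work is to isolate and invoke it; the remainder is bookkeeping with Lemma~\ref{lem0} to freeze the unchanged entries of the occurrence at $i$ and with the "each position is touched once" observation to restrict $s$ to $\{i, i+2\}$.
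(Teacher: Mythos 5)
Your argument is correct in substance, but it takes a longer route than the paper, and the difference traces back to how you read the definition of a transient $q$. The paper's formal definition pins the second, third and fourth entries of a transient occurrence at their \emph{original} values: a transient $q$ at $i$ means $i$ is not an original $q$ yet $\r_i' < \r_{i+1} = \r_{i+2} < \r_{i+3}$ with $\r_{i+1},\r_{i+2},\r_{i+3}$ taken from the input sequence. Under that reading the only entry of the window that can differ from the original is the first one, so the whole statement collapses to your case $s=i$: since $\r_{i+1}=\r_{i+2}<\r_{i+3}$ already holds in $\r$ and $i$ is not an original $q$, we must have $\r_i \ge \r_{i+1} > \r_i'$, hence position $i$ was replaced, and Lemma~\ref{lem1}(i) gives $\r_i < \r_{i+1}$, a contradiction. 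That short argument is the paper's entire proof, and it is exactly your first case.

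Your second branch ($s=i+2$) proves a stronger, runtime version of the claim --- that no $q$-pattern even momentarily appears at a non-original-$q$ site other than through the intended conversions --- and it is mostly sound: in the $\newEm(p)$ sub-case you correctly isolate the bound ``the value in $\last$ is less than the entry left of the replaced position,'' which is established inside the proof of Lemma~\ref{lem1}(ii) even though the lemma's statement only bounds the new entry below its right neighbor; and the $\newEm(q)$ sub-case is disposed of by minimality of $s$. The one step to tighten is the sub-case $i \in \Em(p)$: under your runtime reading, converting an original $p$ at $i$ \emph{does} create an occurrence of $q$ at $i$ that is not an original $q$, so ``contradicting transience'' is not literally a contradiction there. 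That case is excluded only because the paper's definition requires $\r_{i+1}=\r_{i+2}$ in the \emph{original} sequence (which fails at an original $p$), i.e.\ because intended conversions are by definition not transient. Once you invoke that definition to settle this sub-case, the entire $s=i+2$ branch becomes unnecessary and your proof reduces to the paper's; as written, though, the extra branch does document the useful fact that the only non-original $q$'s ever visible during execution are the intended exchanges at original $p$ positions.
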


\begin{proof}
Let $\r$ be the initial sequence, and let $\r'$ be the sequence resulting from applying the algorithm.
    Suppose to the contrary that a transient occurrence of $q$ appears at position $i$, so  $i$ is not an original occurrence of $q$, but $\r_i' <\r_{i+1}= \r_{i+2} <\r_{i+3}$. 
    Since $i$ is not an original occurrence of $q$, we know that the following must not all be true:
    $$
     \r_i < \r_{i+1} = \r_{i+2} < \r_{i+3}.
     $$
     It follows that the inequality that did not hold is $\r_i < \r_{i+1}$. This implies $\r_i' \neq \r_i$, so there must have been a replacement at position $i$. However, since the inequality $\r_i < \r_{i+1}$ does not hold, this contradicts Lemma \ref{lem1}.
\end{proof}

Remark: In light of the previous corollary, we can shorten the pseudocode for the algorithm to remove the conditional case for the transient $q$, since it will never be used.

The following result will also be useful.

\begin{cor}\label{cor:reversal}
Let $\r = \r_1 \r_2 \cdots \r_n$ be the reversal of an inversion sequence, and let $\r'$ be the resulting sequence after applying Algorithm \ref{alg:sd} to $\r$. Then $\r'$ is also the reversal of an inversion sequence.
\end{cor}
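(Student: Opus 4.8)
The plan mirrors the proof of Proposition~\ref{makesinvseq}, but it must be sharper. First I record what the hypothesis and conclusion mean numerically: a sequence $\r = \r_1\cdots\r_n$ is the reversal of an inversion sequence exactly when $0\le \r_k < n+1-k$ for every $k$ (if $\delta$ is an inversion sequence with $\r_k = \delta_{n+1-k}$, then $0\le\delta_j<j$ at $j=n+1-k$ reads $0\le\r_k<n+1-k$, and conversely any such $\r$ is the reversal of $\delta_j := \r_{n+1-j}$). So the goal is to show that the output $\r'$ of Algorithm~\ref{alg:sd} satisfies $0\le\r_k'<n+1-k$ for all $k$. The lower bound is handled in one line, just as in Proposition~\ref{makesinvseq}: every entry of $\r'$ is either an untouched entry of $\r$ or a copy (possibly routed through \last) of an earlier entry of $\r$, and those are all nonnegative. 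The point where the inversion-sequence proof does not transfer is the upper bound: there one used only ``copied from an earlier position'' (smaller index $\Rightarrow$ strictly smaller index $\Rightarrow$ good), but for a \emph{reversed} inversion sequence a copied entry carries a \emph{larger} permitted bound, so I need the finer information in Lemma~\ref{lem1}(ii). A preliminary remark to make: Lemmas~\ref{lem0} and~\ref{lem1} are in fact valid for an arbitrary input sequence, since their proofs appeal only to the combinatorics of occurrences of $\underline{0102}$ and $\underline{0112}$ and never to the inversion-sequence constraint; in particular they apply when $\r$ is the reversal of an inversion sequence.

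Then I fix a position $k$. If the algorithm never touches position $k$, then $\r_k' = \r_k < n+1-k$ and there is nothing to do. Otherwise, inspecting the pseudocode, position $k$ is modified only when $k-2$ is the starting index of an (original or transient) occurrence of $\underline{0102}$ or $\underline{0112}$, so $k$ is the third position of a length-$4$ occurrence whose fourth position is $k+1\le n$. When the loop reaches position $k$, the entry in position $k+1$ still equals the original $\r_{k+1}$: the loop runs left to right and iteration $j$ only writes to position $j$ (equivalently, by Lemma~\ref{lem0} the fourth entry of an occurrence is never altered). Since $\r$ is the reversal of an inversion sequence, $\r_{k+1} < n+1-(k+1) = n-k$. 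By Lemma~\ref{lem1}(ii) the value written into position $k$ satisfies $\r_k' < \r_{k+1}$, hence $\r_k' < n-k < n+1-k$. Combining the two cases gives $0\le\r_k'<n+1-k$ for every $k$, i.e.\ $\r'$ is the reversal of an inversion sequence.

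I do not anticipate a genuine obstacle, since the delicate estimate has already been carried out in Lemma~\ref{lem1}; the only point that needs to be stated with care is that the entry immediately to the right of a modified position still carries its original value at the moment of modification, which is exactly what lets the reversal constraint be read off at index $k+1$ rather than at $k$ (where it would not give a strong enough bound). It is worth flagging afterward that this corollary is presumably the mechanism for transferring the reciprocity of $\underline{0102}$ and $\underline{0112}$ to their reversals $\underline{2010}$ and $\underline{2110}$, which enters relation (xiii) of Theorem~\ref{asupdated}.
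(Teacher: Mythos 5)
Your proposal is correct and follows essentially the same route as the paper's proof: untouched entries keep their bound, and for a replaced entry one invokes Lemma~\ref{lem1}(ii) to get $\r_k' < \r_{k+1} \le n-k-1$, which is exactly the paper's argument. Your added remark that Lemmas~\ref{lem0} and~\ref{lem1} never use the inversion-sequence hypothesis (so they apply to reversals) is a point the paper leaves implicit, but it does not change the substance of the proof.
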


\begin{proof}
    A sequence $\r=  \r_1 \r_2 \cdots \r_n$ is the reversal of an inversion sequence if and only if $0 \leq \r_i \leq n-i$ for all $i$. As in the proof of Proposition \ref{makesinvseq}, if $\r_j$ is replaced, then $\r_j' = \r_i$ for some $i<j$. By Lemma \ref{lem1}(ii), we know $\r_j' < \r_{j+1}$. Then,
    $$
    0 \leq \r_i = \r_j' <\r_{j+1} \leq n- j - 1,
    $$
    so $0 \leq \r_j'<n-j-1 < n-j$. 
    Also, if $\r_j$ is not replaced, we have $0 \leq \r_j' = \r_j \leq n-j$.
    Thus $\r'$ is also the reversal of an inversion sequence.
\end{proof}

\begin{lemma}\label{lem2} Let $\r$ be an inversion sequence to which the algorithm is applied, and let $\r'$ be the resulting sequence. If $i$ is a transient occurrence of $p$, then $\r_{i+2}'$ is not equal to $\r_i'$.\end{lemma}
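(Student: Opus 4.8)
The plan is to follow Algorithm \ref{alg:sd} through the iteration that processes position $i+2$ and show that the third entry of the transient occurrence at $i$ is necessarily overwritten there. Concretely, I will prove $\r_{i+2}' = \r_i$ and then observe $\r_i \neq \r_{i+2} = \r_i'$, which gives the claim.

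First I would collect structural facts. Since $i$ is a transient $p$, we have $\r_i' = \r_{i+2}$ and $\r_i' < \r_{i+1} < \r_{i+3}$ with $i$ not an original $p$; as noted after the definitions this forces $\r_i' \neq \r_i$, so $\seq[i]$ was genuinely replaced, and that can only happen during the single iteration that processes position $i$. That replacement occurs because $i-2$ is an occurrence of $p$ or $q$ at some stage of the algorithm (it lies in $\Em(p)$, $\Em(q)$, or $\newEm(p)$ at that iteration), so Lemma \ref{lem0} applies to the occurrence at $i-2$ and guarantees its second and fourth entries — positions $i-1$ and $i+1$ — are never altered; in particular $\seq[i+1]$ always equals $\r_{i+1}$. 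I would also note that $i \notin \Em(q)$, since an original $q$ at $i$ would give $\r_{i+1} = \r_{i+2} = \r_i'$, contradicting $\r_i' < \r_{i+1}$; and $i \notin \Em(p)$ because $i$ is not an original $p$. Finally, Lemma \ref{lem1}(i) applied to the replaced entry $\seq[i]$ yields $\r_i < \r_{i+1}$, so together with $\r_{i+1} < \r_{i+3}$ we see that $\r_i = \r_{i+2}$ would make $i$ an original $p$; hence $\r_i \neq \r_{i+2}$.

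Next I would examine the iteration that processes position $i+2$. At its start, $\seq[i]$ already holds its final value $\r_i'$ (it changes only when position $i$ is processed), $\seq[i+1] = \r_{i+1}$, and $\seq[i+2], \seq[i+3]$ still hold the original values $\r_{i+2}, \r_{i+3}$. Because $\r_i' = \r_{i+2}$ and $\r_i' < \r_{i+1} < \r_{i+3}$, position $i$ is at that moment an occurrence of $p$, so $i \in \newEm(p)$; combined with $i \notin \Em(p) \cup \Em(q)$, the algorithm executes the branch that swaps $\seq[i+2]$ with $\last$. It then remains to identify $\last$: inspecting each case of the pseudocode, immediately after the iteration that replaces $\seq[i]$ the variable $\last$ holds the \emph{old} value of that entry, namely $\r_i$; and since $\seq[i+1]$ is never replaced, the intervening iteration (processing position $i+1$) performs no replacement and leaves $\last$ untouched. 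Hence the swap sets $\seq[i+2] \gets \last = \r_i$, and $\seq[i+2]$ is never modified again, so $\r_{i+2}' = \r_i \neq \r_{i+2} = \r_i'$, as required.

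The main obstacle is purely bookkeeping: one must carefully justify — rather than merely assert — the invariants that $\seq[i+1]$ is never modified, that $\seq[i]$ attains its final value exactly at the iteration processing position $i$, and that $\last$ still equals $\r_i$ when position $i+2$ is processed. Each of these rests on Lemma \ref{lem0} together with the observation that the loop touches entry $j$ only while processing position $j$; once they are in place, the identity $\r_{i+2}' = \r_i$ is immediate.
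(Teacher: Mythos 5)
Your argument is correct and follows essentially the same route as the paper's proof: both use Lemma \ref{lem0} to conclude that the replacement at position $i$ is the most recent one when the loop reaches position $i+2$ (so $\last$ still holds $\r_i$), observe that the transient-$p$ branch then fires so that $\r_{i+2}' = \r_i$, and finish with $\r_i \neq \r_i'$. Your version simply spells out the pseudocode bookkeeping (the branch selection at iteration $i+2$ and the inequality $\r_i \neq \r_{i+2}$ via Lemma \ref{lem1}(i)) in more detail than the paper does.
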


\begin{proof}
Since $i$ is a transient occurrence of $p$, we know by Lemma \ref{lem0} that this occurrence must have resulted from a replacement in position $i$, meaning that $i$ was the third position in a transient $p$ or original $p$ or $q$. (If $\r_i$ had not been changed, then $i$ is an original occurrence of $p$ or $q$.) Also by Lemma \ref{lem0}, this must have been the most recent replacement made when the algorithm reaches position $i+2$, so $\r_i$ is the entry stored in $\last$.

Since $i$ is a transient occurrence of $p$ or $q$, we know $\r_i' \neq \r_i$, and the entry in position $i+2$ will be replaced with the entry in $\last$, so $\r_{i+2}' = \r_i$. The result follows.
\end{proof}

\begin{theorem}\label{thm:sdexchange}
     Algorithm \ref{alg:sd} exchanges occurrences of $p$ and $q$ in a sequence $\r_1 \ldots \r_n$.
\end{theorem}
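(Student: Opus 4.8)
The plan is to show that if $\r$ is an inversion sequence with $\Em(p,\r)=S$ and $\Em(q,\r)=T$, then the output $\r'$ of Algorithm~\ref{alg:sd} satisfies $\Em(q,\r')=S$ and $\Em(p,\r')=T$. By Proposition~\ref{makesinvseq}, $\r'$ is again an inversion sequence, so these occurrence sets are well-defined. The structural fact I will use throughout is that the algorithm only ever writes to the entry at the current loop index, so each entry $\r_k$ is overwritten at most once, exactly when the loop reaches index $k$; since the loop is monotone, once it has passed index $k$ the value $\r_k$ is frozen.

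First I would establish the inclusions $S\ci\Em(q,\r')$ and $T\ci\Em(p,\r')$: each original occurrence of $p$ at a position $j$ becomes an occurrence of $q$ at $j$ in $\r'$, and each original occurrence of $q$ at $j$ becomes an occurrence of $p$ at $j$. By Lemma~\ref{lem0}, among the four entries $\r_j,\r_{j+1},\r_{j+2},\r_{j+3}$ of the occurrence only $\r_j$ and $\r_{j+2}$ can ever change, so $\r'_{j+1}=\r_{j+1}$ and $\r'_{j+3}=\r_{j+3}$. When the loop reaches index $j+2$, the algorithm recognizes $j$ as an original $p$ (resp.\ an original $q$) using the sets $\Em(p)=S$ and $\Em(q)=T$ recorded at the start --- these are disjoint, since no position can host both $\underline{0102}$ and $\underline{0112}$ --- and it executes $\seq[j+2]\gets\seq[j+1]$ (resp.\ $\seq[j+2]\gets\seq[j]$); this is the only write ever made at position $j+2$. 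Hence $\r'_{j+2}=\r_{j+1}$ in the $p$-case and $\r'_{j+2}=\r'_j$ in the $q$-case. Combining these identities with the defining inequalities of the original occurrence and, in case $\r_j$ itself was overwritten at step $j$, with Lemma~\ref{lem1}(ii) (which yields $\r'_j<\r_{j+1}$), I would check directly that $\r'_j<\r'_{j+1}=\r'_{j+2}<\r'_{j+3}$ in the $p$-case and $\r'_j=\r'_{j+2}$ together with $\r'_j<\r'_{j+1}<\r'_{j+3}$ in the $q$-case. Since positions $j,\dots,j+3$ are frozen once the loop has passed $j+3$, this shows $j$ is an occurrence of $q$ (resp.\ $p$) in $\r'$.

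Next I would show the reverse inclusions $\Em(q,\r')\ci S$ and $\Em(p,\r')\ci T$. Suppose $i\in\Em(q,\r')$. Then $i$ is not an original occurrence of $q$, since by the previous paragraph such a position becomes an occurrence of $p$ in $\r'$, and a single position cannot host both patterns: a $\underline{0102}$ at $i$ forces $\r'_{i+2}<\r'_{i+1}$ while a $\underline{0112}$ at $i$ forces $\r'_{i+1}=\r'_{i+2}$. If $i$ were not an original occurrence of $p$ or $q$ at all, then the $q$ at $i$ in $\r'$ would be a transient $q$, which Corollary~\ref{lem1cor} forbids. Hence $i$ is an original $p$, i.e.\ $i\in S$; together with the forward inclusion this gives $\Em(q,\r')=S$. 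The argument for $p$ is symmetric: if $i\in\Em(p,\r')$ then $i$ is not an original $p$ (it would have turned into a $q$), and if $i$ is not original at all then the $p$ at $i$ is a transient $p$, so Lemma~\ref{lem2} gives $\r'_{i+2}\neq\r'_i$, contradicting the equality $\r'_i=\r'_{i+2}$ required by a $\underline{0102}$-occurrence at $i$; thus $i\in T$ and $\Em(p,\r')=T$.

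The step I expect to be the crux is this last one, and it rests entirely on Corollary~\ref{lem1cor} and Lemma~\ref{lem2}, which absorb the genuinely delicate bookkeeping: that no transient $q$ is ever produced, and that every transient $p$ produced at a position $i$ is subsequently erased, because when the loop reaches index $i+2$ and finds $i\in\newEm(p)$ it overwrites $\seq[i+2]$ with the value held in $\last$. Hidden inside those lemmas are the facts that occurrences of the length-$4$ patterns $\underline{0102}$ and $\underline{0112}$ cannot overlap in three consecutive entries --- so the second and fourth entries of an occurrence are never touched, and the value stored in $\last$ is not clobbered between steps $i$ and $i+2$ --- and that the transient $p$ persists as an occurrence until the loop arrives at its clean-up index $i+2$. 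Granting the already-proved lemmas, the theorem follows by combining the inclusions above.
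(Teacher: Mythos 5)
Your proposal is correct and follows essentially the same route as the paper's proof: the forward inclusions are checked directly using Lemma~\ref{lem0} and Lemma~\ref{lem1}(ii) to handle the case where the first entry of an original occurrence was already rewritten, and the reverse inclusions are ruled out via Corollary~\ref{lem1cor} (no transient $q$) and Lemma~\ref{lem2} (the clean-up write destroys any transient $p$). The extra bookkeeping you make explicit (each position written at most once, second and fourth entries frozen) is implicit in the paper's argument, so this is the same proof in slightly more detail.
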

\begin{proof}
Let $\r'$ be the sequence that results after applying the algorithm to a sequence $\r$.
We will show that $ \Em(p,\r) = \Em(q,\r')$ and $\Em(q,\r) = \Em(p,\r')$. 

Suppose there is an original $p$ or $q$ in $\r$ at position $i$. If the first two entries in this occurrence were unchanged when the third digit is reached, then by construction, the algorithm will convert the pattern to $q$ or $p$, respectively.

By Lemma \ref{lem0}, the only other possibility is that the first digit was changed by the algorithm before the third digit was reached. By Lemma \ref{lem1}, even if the first digit was changed, it is still less than the second digit. So if $i$ is the position of an original $p$, and the entry in position $i$ was changed, we still have $\r_i' < \r_{i+1} \,(=\r_{i+1}')$ and $\r_{i+1} < \r_{i+3}\,(=\r_{i+3}')$. Then replacing $\r_{i+2}$ with $\r_{i+1}$ yields an occurrence of $q$ in position $i$, as the resulting sequence will have $\r_{i}' < \r_{i+1}' = \r_{i+2}' < \r_{i+3}'$. Similarly, if $i$ is the position of an original $q$, copying the entry in $\r_i$ to position $i+2$ will yield an occurrence of $p$ in position $i$.

So we have that $\Em(p,\r)\ci \Em(q,\r')$, and $\Em(q,\r)\ci  \Em(p,\r')$.

It remains to show that there will not be any occurrences of $p$ or $q$ in $\r'$ that were not in $\Em(p,\r)$ or $\Em(q,\r)$. By Lemma \ref{lem1cor}, we have that ``extra'' occurrences of $q$ cannot be generated by the algorithm.

If a new (transient) occurrence of $p$ is created at position $i$ in the sequence by the algorithm, this will be corrected by replacing the third entry in this occurrence with the entry stored in $\last$. 
This will change an occurrence of $p$ to a non-occurrence, as long as $\r_{i+2}'$ is not equal to $\r_i'$.
This is guaranteed by Lemma \ref{lem2}. 

So the resulting sequence $\r'$ will not have any occurrences of $p$ that were not in $\Em(q,\r)$, and will not have any occurrences of $q$ that were not in $\Em(p,\r)$. Thus the result holds. 
\end{proof}

\begin{lemma} \label{lem:replace}
The entry $\r_i' \neq \r_i$ $\iff$ $i-2$ is an original $p$, transient $p$, or original $q$.
\end{lemma}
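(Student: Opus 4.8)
The plan is to prove both implications by tracking precisely when and how Algorithm~\ref{alg:sd} overwrites the entry in position $i$. The crucial structural fact is that the pseudocode assigns to $\seq[i]$ only during iteration $i$ of the main loop (so each position is modified at most once, at its own step), and only inside one of the four conditional branches. Moreover the fourth branch never executes: for it to fire we would need $i-2\in\newEm(q)$ with $i-2\notin\Em(q)$, i.e.\ a transient $q$ at $i-2$, which is impossible by Corollary~\ref{lem1cor}. Hence $\r_i'\neq\r_i$ can hold only if one of the first three branches fires at iteration $i$ \emph{and} the value copied in actually differs from the original $\r_i$. The proof therefore reduces to matching each of the first three branches to a condition on $i-2$, and, separately, checking that in each of those cases the value genuinely changes.

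For the $(\Rightarrow)$ direction I would argue directly: if $\r_i'\neq\r_i$ then $\seq[i]$ was changed, so one of the first three branches fired at iteration $i$. If it was the first, $i-2\in\Em(p)$, so $i-2$ is an original $p$; if the second, $i-2\in\Em(q)$, an original $q$. If it was the third, then $i-2\in\newEm(p)$ while $i-2\notin\Em(p)\cup\Em(q)$, i.e.\ the positions $i-2,i-1,i,i+1$ currently form an occurrence of $p$ that is not an original $p$ (nor an original $q$); since at iteration $i$ positions $i-2$ and $i-1$ already hold their final values, and by Lemma~\ref{lem0} applied to this occurrence its second entry $\r_{i-1}$ is never altered, this is exactly a transient occurrence of $p$ at $i-2$. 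So $\r_i'\neq\r_i$ forces $i-2$ into one of the three listed classes.

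For the $(\Leftarrow)$ direction I would handle the three cases separately, in each showing that the relevant branch fires and that the copied value differs from $\r_i$. If $i-2$ is an original $p$, the first branch fires and copies $\seq[i-1]$; by Lemma~\ref{lem0} the second entry of this occurrence was never changed, so $\r_i'=\seq[i-1]=\r_{i-1}>\r_{i-2}=\r_i$. If $i-2$ is an original $q$, it cannot simultaneously be an original $p$ (one forces $\r_{i-2}=\r_i$, the other $\r_{i-2}<\r_i$), so the first branch is skipped and the second fires, copying $\seq[i-2]$; whether or not position $i-2$ was itself replaced at iteration $i-2$, Lemma~\ref{lem1}(ii) gives that the current value there is less than $\r_{i-1}=\r_i$, so $\r_i'<\r_i$. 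If $i-2$ is a transient $p$, then $\r_{i-2}'=\r_i$ and $\r_{i-2}'<\r_{i-1}<\r_{i+1}$ (with $\r_{i-1}$ equal to its final value, by Lemma~\ref{lem0}), so at iteration $i$ we have $i-2\in\newEm(p)\setminus(\Em(p)\cup\Em(q))$ and the third branch fires, copying the current content of $\last$; exactly as in the proof of Lemma~\ref{lem2}, Lemma~\ref{lem0} excludes any intervening replacement at position $i-1$, so $\last$ still holds the original $\r_{i-2}$, and since a transient occurrence has a changed first entry ($\r_{i-2}'\neq\r_{i-2}$) we conclude $\r_i'=\r_{i-2}\neq\r_{i-2}'=\r_i$. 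Combining the three cases gives the converse.

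The step I expect to be the main obstacle is the transient-$p$ case: one has to reconcile the definition of ``transient $p$'', which is phrased in terms of the final sequence, with what the algorithm actually sees (the still-original $\r_i$) when it reaches iteration $i$, and one has to pin down the content of the auxiliary variable $\last$, which requires Lemma~\ref{lem0} to exclude a replacement at position $i-1$ between iterations $i-2$ and $i$. A recurring secondary subtlety is that ``a branch fires'' is strictly weaker than ``the entry changes''; closing that gap in the first two cases rests on the monotonicity statement Lemma~\ref{lem1}(ii) together with the fact, from Lemma~\ref{lem0}, that the second entry of a pattern occurrence is never touched.
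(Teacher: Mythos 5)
Your proposal is correct and follows essentially the same route as the paper: the forward direction rests on the observation that only third entries of original or currently-formed pattern occurrences are ever overwritten (with transient $q$ ruled out by Corollary~\ref{lem1cor}), and the backward direction treats the cases of an original $p$, an original $q$, and a transient $p$ at $i-2$ using Lemmas~\ref{lem0} and~\ref{lem1} together with the tracking of \texttt{last} as in Lemma~\ref{lem2}. Your write-up is somewhat more explicit than the paper's (in particular in matching the algorithm's branches to the three classes and in reconciling the definition of a transient $p$ with the values the algorithm sees), but the underlying argument is the same.
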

\begin{proof}
    $(\Leftarrow)$: If $\r_i' \neq \r_i$ then it must be the case that $i$ was the third position of a pattern occurrence at some point in the sequence. (this is the only scenario where an entry could be changed).

     $(\Rightarrow)$: If $i-2$ is an original $p$, we have $\r_{i-1}' = \r_{i-1}$ (by Lemma \ref{lem0}), $\r_{i-1}>e_i$ and $\r_i' = \r_{i-1}$. So $\r_i' \neq \r_i$.

     If $i-2$ is an original $q$, we have $\r_{i-2}<\r_{i-1} = \r_i$ and $\r_i' = \r_{i-2}'$. If $\r_{i-2}' = \r_{i-2}$ then $\r_i' \neq \r_i$.

     If $\r_{i-2}' \neq \r_{i-2}$ then $\r_{i-2}'< \r_{i-1}$ still by Lemma \ref{lem1}.
     So $\r_i' =  \r_{i-2}'< \r_{i-1}= \r_i$; thus  $\r_i' \neq \r_i$.

     If $i-2$ is a transient $p$, we have $\r_{i-2} \neq \r_{i-2}' = \r_i$, and $\r_i' = \r_{i-2}$. So $\r_i' \neq \r_i$.
\end{proof}

\begin{theorem} \label{thm:sdinv}
     The mapping described by  Algorithm \ref{alg:sd} is an involution.
\end{theorem}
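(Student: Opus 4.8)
The plan is to prove that applying Algorithm~\ref{alg:sd} twice returns the input sequence. Write $\r'$ for the output of the algorithm on an inversion sequence $\r$; by Proposition~\ref{makesinvseq} this is again an inversion sequence, so the algorithm may be run on it, and by Theorem~\ref{thm:sdexchange} we have $\Em(p,\r')=\Em(q,\r)$ and $\Em(q,\r')=\Em(p,\r)$. Thus the original occurrences of the two patterns in $\r'$ occupy the same combined index set $\Em(p,\r)\cup\Em(q,\r)$ as in $\r$, so on the second run the ``original-pattern'' branches of the conditional fire at exactly the same positions as on the first, only with the roles of $p$ and $q$ exchanged. It then suffices to show that, once the transient-$p$ corrections are taken into account, the second run overwrites precisely the entries overwritten by the first and restores their former values; any entry left untouched by both runs is trivially preserved.

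To make this tractable I would first organize the overwritten positions. By Lemma~\ref{lem:replace} an entry $\r_i$ is changed exactly when $i-2$ is an original $p$, a transient $p$, or an original $q$, and by Lemma~\ref{lem0} a transient $p$ at $k$ forces the entry at position $k$ to have been overwritten, hence $k-2$ to be another occurrence. Since $\underline{0102}$ and $\underline{0112}$ overlap themselves and each other in at most two entries, interacting occurrences sit two positions apart, so the overwritten positions fall into maximal arithmetic ``runs'' of common difference $2$: each run begins at the third position of an original occurrence whose predecessor is not an occurrence, and its remaining occurrences are original $p$'s, original $q$'s, or transient $p$'s --- never transient $q$'s, by Corollary~\ref{lem1cor}. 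Distinct runs are separated by gaps of at least three, so \last{} is overwritten before each run starts and carries no information between runs, and the analysis localizes to one run at a time. Within a run I would trace the joint evolution of the sequence and of \last{}, using the uniformity that \last{} always receives the \emph{old} value of the entry just overwritten, while the value newly written at a position $i$ is $\r_{i-1}$ if $i-2$ is an original $p$, the current entry at position $i-2$ if $i-2$ is an original $q$, and the previous contents of \last{} if $i-2$ is a transient $p$. Chasing these rules along the run, with Lemma~\ref{lem1} keeping the relevant order relations (hence the occurrence statuses) under control, expresses the algorithm's action on the affected block as an explicit transformation depending only on the ordered list of occurrence types in the run. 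I would then check, for each such list, that this transformation is an involution and that the transformed block re-presents the very same run --- same indices, same occurrence types but with $p$ and $q$ interchanged --- so that the second application performs the same transformation again and undoes it; summing over runs and over untouched entries yields the claim.

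The main obstacle is the bookkeeping around transient $p$'s: \last{} is written at every overwrite but read only at a transient-$p$ overwrite, so one must verify that the positions becoming transient $p$'s on the second run are exactly those that did on the first (equivalently, that the set of overwritten positions is the same for $\r$ and for $\r'$) and that \last{} holds the required value at each such step. Corollary~\ref{lem1cor}, which eliminates transient $q$'s, is what keeps the list of occurrence-type patterns to examine finite; runs mixing original and transient occurrences, and runs of either parity, are where the verification is heaviest. These are exactly the local pictures summarized in the three diagrams immediately preceding the theorem: applying the algorithm twice carries each block $\r_i\r_{i+1}\r_{i+2}\r_{i+3}$ back to itself, the dotted edges recording the equalities that trigger each overwrite and the solid edges the copied values, so that $\r''_i=\r_i$ can be read off column by column.
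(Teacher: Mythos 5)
Your overall strategy is sound and close in spirit to the paper's: show the second application restores every entry, using Proposition~\ref{makesinvseq}, Theorem~\ref{thm:sdexchange}, Lemma~\ref{lem:replace} and Corollary~\ref{lem1cor} to control where overwrites can happen. The run decomposition (overwritten positions chain in steps of $2$, and \last{} is never read across a gap because the first overwrite of a run is at an original occurrence) is a legitimate way to localize the problem. But as written the argument has a genuine gap: the entire content of the theorem is concentrated in the step you defer to ``I would then check, for each such list, that this transformation is an involution and that the transformed block re-presents the very same run,'' i.e.\ that the overwritten positions --- in particular the transient-$p$ positions --- are identical on the second pass and that \last{} then holds the value needed to restore $\r_i$. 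You correctly flag this as the main obstacle, but you give no argument for it, and it is not a finite check: runs have unbounded length, and the value written at each transient-$p$ position depends (through \last{}) on the whole earlier portion of the run, so ``each such list'' is an infinite family requiring an induction along the run, which is essentially the induction you would have to supply.

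Two specific points that your plan never addresses, and which the paper's proof (an induction on the position $i$ proving $\r_i''=\r_i$) handles explicitly, are: (1) if $i-2$ is a transient $p$ in $\r$, one must show it is again a transient $p$ in $\r'$, which uses the inductive hypothesis $\r_{i-2}''=\r_{i-2}$ together with Lemma~\ref{lem0} to re-derive the defining inequalities in $\r'$; and (2) if $\r_i'=\r_i$ (position untouched on the first pass), one must rule out that $i-2$ becomes a transient $p$ in $\r'$, which would cause a spurious overwrite on the second pass. The paper does this by contradiction, splitting on whether $\r_{i+1}$ was changed: if not, the inequalities force an original $p$ at $i-2$ in $\r$, contradicting Lemma~\ref{lem:replace}; if so, Lemma~\ref{lem:replace} applied at $i-1$ shows $\r_{i+1}'$ equals $\r_i$ or $\r_{i-1}$, contradicting $\r_i<\r_{i-1}<\r_{i+1}'$. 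Nothing in your proposal substitutes for this case; pointing to the three diagrams before the theorem records the intended equalities but does not prove that no other local configuration can arise. Until the within-run verification (or an equivalent positional induction) is actually carried out, the proof is a plan rather than a proof.
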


\begin{proof}
Let $\r$ be an inversion sequence, let $\r'$ be the resulting sequence after applying  Algorithm \ref{alg:sd} to $\r$, and let $\r''$ be the resulting sequence after applying the algorithm to $\r'$.
We will show by induction that $\r_i'' = \r_i$ for all $i$, so $\r'' = \r$.

Base case: The first entry will never be changed by the algorithm, because it is not the third entry of any pattern occurrence. So $\r_1 = \r_1' =\r_1''$.

Now assume $\r_{k}'' = \r_{k}$ for all $k<i$. We will show $\r_i'' = \r_i$.

\begin{enumerate}[(a)]
    \item First suppose that $\r_i' \neq \r_i$.
    
    By Lemma \ref{lem:replace}, Algorithm \ref{alg:sd} will only replace $\r_i$ if one of the following hold:
\begin{enumerate}[(i)]
    \item $\r_i$ is the third entry of an original $p$. Then $\r_i = \r_{i-2}$. Also by Theorem \ref{thm:sdexchange}, after applying the algorithm twice we know that $i-2$ will be an original $p$ in $\r''$, so $\r_i'' = \r_{i-2}''$. By the inductive hypothesis, $\r_{i-2}'' = \r_{i-2}$. So we have $\r_i = \r_{i-2} = \r_{i-2}'' = \r_i''$.
    \item $\r_i$ is the third entry of an original $q$. In this case, $\r_i = \r_{i-1}$. Also by Theorem \ref{thm:sdexchange}, after applying the algorithm twice we know $\r_i'' = \r_{i-1}''$. By the inductive hypothesis, $\r_{i-1}'' = \r_{i-1}$, so $\r_i = \r_{i-1} = \r_{i-1}'' = \r_i''$.
    \item $\r_i$ is the third entry of a transient $p$.  Since $i$ is third entry of transient $p$, we have $\r_{i-2}'=\r_i < \r_{i-1}<\r_{i+1}$. and $\r_{i-2} = \r_i'$. By the inductive hypothesis, $\r_{i-2}'' = \r_{i-2}$, so $\r_{i-2}'' = \r_i'$. Moreover, $\r_{i-1}' = \r_{i-1}$ and $\r_{i+1}' = \r_{i+1}$ by  Lemma \ref{lem0}.
So we have 
$$
\r_{i-2}'' = \r_i' = \r_{i-2} < \r_{i-1}=\r_{i-1}'< \r_{i+1}' = \r_{i+1}
$$
Thus $i-2$ is a transient $p$ in $\r'$. So $\r_i '' = \r_{i-2}'$; 
thus $\r_i = \r_{i-2}' = \r_i''$.
\end{enumerate}

So if $\r_i' \neq \r_i$, we have $\r_i'' = \r_i$.

\item Otherwise suppose $\r_i' = \r_i$. By Lemma \ref{lem:replace}, $i$ is not the third entry of a pattern occurrence that appears during the algorithm. We will show that this is also true in $\r'$, so $\r_i' = \r_i''$.

\begin{enumerate}[(i)]
    \item We know by Theorem \ref{thm:sdexchange} that $i-2$ is an original $p$ in $\r$ if and only if $i-2$ is an original $q$ in $\r'$ and $i-2$ is an original $q$ in $\r$ if and only if $i-2$ is an original $p$ in $\r'$ . It follows that $i-2$ is neither an original $p$ or $q$ in $\r'$, so in these cases, $\r_i' = \r_i''$.
    
    \item The only other possibility is that $i-2$ is a transient $p$ in $\r'$. Then $\r_{i-2}''=\r_i'<\r_{i-1}'<\r_{i+1}'$. 

We know 
\al{
\r_{i-2}'' &= \r_{i-2}, \tag{by the inductive hypothesis} \\
\r_{i-1}'' &= \r_{i-1}, \tag{by the inductive hypothesis}\\
\r_i' &= \r_i. \tag{by assumption}
}
so by substitution we have
$$
\r_{i-2}=\r_i<\r_{i-1}<\r_{i+1}'.
$$

\begin{enumerate}
    \item Now, if $\r_{i+1}=\r_{i+1}'$, we have 
$$
\r_{i-2}=\r_i<\r_{i-1}<\r_{i+1}
$$
which implies that there was an original $p$ in $\r$ at $i-2$, which is a contradiction.

\item Otherwise, $\r_{i+1}\neq \r_{i+1}'$, which by Lemma  \ref{lem:replace} implies that $i-1$ is an original $p$, original $q$, or transient $p$ in $\r$.
Then either $\r_{i+1}' = \r_{i}'$ (if $i-1$ is an original $p$), $\r_{i-1}'$ (if $i-1$ is an original $q$), or $\r_{i-1}$ (if $i-1$ is a transient $p$).

So $\r_{i+1}'$ is equal to either $\r_{i}' = \r_i$ or $\r_{i-1}' = \r_{i-1}$. However, we already had that $\r_i<\r_{i-1}<\r_{i+1}'$, so this is a contradiction.
\end{enumerate}

So $i-2$ is not a transient $p$ in $\r'$; thus $\r_i'' = \r_i'$.
\end{enumerate}
It follows that whenever $\r_i' = \r_i$, we have $\r_i'' = \r_i'$. So again we have $\r_i'' = \r_i$.
\end{enumerate}
In all possible cases, $\r_i'' = \r_i$, so the result holds by induction.
\end{proof}

\begin{cor}
Let $\r$ be an inversion sequence and let $\r'$ be the sequence resulting from applying Algorithm \ref{alg:sd} to $\r$. If there is a transient $p$ at $i$ in $\r$, then there is a transient $p$ at $i$ in $\r'$.
\end{cor}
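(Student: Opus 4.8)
The plan is to derive the corollary directly from the involution property (Theorem~\ref{thm:sdinv}) together with the structural bookkeeping already in place; in fact the statement is precisely the sub-conclusion that appears inside case~(a)(iii) of the proof of Theorem~\ref{thm:sdinv}, here isolated and stated on its own. Write $\r'$ for the image of $\r$ under Algorithm~\ref{alg:sd}, and $\r''$ for the image of $\r'$, so that $\r'' = \r$ by Theorem~\ref{thm:sdinv}. Assuming $i$ is a transient $p$ in $\r$, I would first collect the consequences of this: by definition $\r_i' = \r_{i+2}$, $\r_i' < \r_{i+1} < \r_{i+3}$, and $i \notin \Em(p,\r)$; moreover, as spelled out in the proof of Lemma~\ref{lem2} (and again in the transient-$p$ case of Lemma~\ref{lem:replace}), when the algorithm reaches position $i+2$ the value held in \last\ is still the original entry $\r_i$, and the correcting swap writes it into position $i+2$, so $\r_{i+2}' = \r_i$ as well. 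By the note following the definitions, $\r_i' \ne \r_i$, so position $i$ was replaced; hence Lemma~\ref{lem0} gives $\r_{i+1}' = \r_{i+1}$ and $\r_{i+3}' = \r_{i+3}$, and Lemma~\ref{lem1}(i) gives $\r_i < \r_{i+1}$.

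Next I would check that $i$ is neither an original $p$ nor an original $q$ in $\r'$. We have $i \notin \Em(p,\r)$ by hypothesis, and $i \notin \Em(q,\r)$: an original $q$ at $i$ would force $\r_{i+1} = \r_{i+2} = \r_i'$, contradicting $\r_i' < \r_{i+1}$. Since Theorem~\ref{thm:sdexchange} identifies $\Em(p,\r)$ with $\Em(q,\r')$ and $\Em(q,\r)$ with $\Em(p,\r')$, it follows that $i \notin \Em(p,\r') \cup \Em(q,\r')$.

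It then remains to verify the three defining clauses of ``transient $p$ at $i$'' for the pair $(\r', \r'' = \r)$: (1) $i \notin \Em(p,\r')$, just shown; (2) $\r_i'' = \r_{i+2}'$, i.e.\ $\r_i = \r_{i+2}'$, which is the identity $\r_{i+2}' = \r_i$ obtained above; and (3) $\r_i'' < \r_{i+1}' < \r_{i+3}'$, which after substituting $\r_i'' = \r_i$, $\r_{i+1}' = \r_{i+1}$, $\r_{i+3}' = \r_{i+3}$ reduces to $\r_i < \r_{i+1} < \r_{i+3}$ --- the left inequality being the Lemma~\ref{lem1}(i) fact and the right one being part of the defining inequality chain of the transient $p$ in $\r$. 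This establishes that $i$ is a transient $p$ in $\r'$. The one step that is more than bookkeeping, and which I expect to be the \emph{main obstacle}, is justifying $\r_{i+2}' = \r_i$ cleanly: it rests on the observation that no replacement occurs between the processing of position $i$ and of position $i+2$, so that \last\ still holds the original $\r_i$ at that moment --- and this is exactly what Lemma~\ref{lem0}, applied to the occurrence at $i$, guarantees, since position $i+1$ can never be the third entry of a pattern occurrence overlapping this window.
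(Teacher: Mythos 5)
Your proposal is correct and is essentially the paper's own argument: the paper simply proves the corollary by pointing to case (a)(iii) of the proof of Theorem~\ref{thm:sdinv}, whose inductive hypothesis $\r_k''=\r_k$ is now available for all $k$, and your write-up spells out exactly that argument in standalone form using Lemma~\ref{lem0}, Lemma~\ref{lem1}(i), the Lemma~\ref{lem2} bookkeeping ($\r_{i+2}'=\r_i$), and Theorems~\ref{thm:sdexchange} and~\ref{thm:sdinv}. Your explicit verification that $i\notin\Em(p,\r')$ via Theorem~\ref{thm:sdexchange} is a small extra piece of care that the paper leaves implicit, but the route is the same.
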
 

\begin{proof}
    This is show in case (iii) of part (a) of the proof of Theorem \ref{thm:sdinv}, relying on the hypothesis that $\r_k '' = \r_k$ for all $k<i$, which we now know is true for all $k$.
\end{proof}

Remark: This is not the same bijection as the one found recursively. For example, we see in Example 6 that the previous bijection maps 010223 to 011203. Algorithm \ref{alg:sd} would map 010223 to 011213. \\ 

\section{The remaining equivalences}
The remaining set of equivalences, $$
\underline{2010} \sim \underline{2110} \sim \underline{2120},
$$
can be shown similarly. Note that $2010$ is the reversal of the pattern $0102$, and $2110$ is the reversal of the pattern $0112$. So if a sequence $\r$ of length $n$ has an occurrence of $2010$ at position $i$, its reversal will have an occurrence of $0102$ at position $n-i-2$.

So to show the  equivalence $\underline{2010} \sim \underline{2110}$, we can modify the bijection by first reversing the input sequence, applying Algorithm \ref{alg:sd} to this, and then reversing the output sequence. By Corollary \ref{cor:reversal}, we know that this process will send inversion sequences to inversion sequences. So, this will give a bijection between $$\{ \r\in I_n : \Em(\underline{2010},\r) = S \text{ and } \Em(\underline{2110},\r) = T\}$$ and $$\{ \r\in I_n : \Em(\underline{2110},\r) = S \text{ and } \Em(\underline{2010},\r) = T \}$$ for all $S,T \ci [n]$.

Also note that converting between $\underline{2120}$ and $\underline{2110}$ requires the third entry to be replaced by the value in either the first or second entry, similarly to the case for switching between $\underline{0102}$ and $\underline{0112}$. Indeed, reassigning $p = \underline{2120}$ and $q = \underline{2110}$, Algorithm \ref{alg:sd} as described previously yields a bijection between $\{ \r\in I_n : \Em(\underline{2120},\r) = S \text{ and } \Em(\underline{2110},\r) = T\}$ and $\{ \r\in I_n : \Em(\underline{2110},\r) = S \text{ and } \Em(\underline{2120},\r) = T \}$ for all $(S,T) \ci [n] $. In order to verify this, we will need to make slight modifications in the proofs of the previous lemmas. First, Lemma \ref{lem0} still holds since \underline{2120} and \underline{2110} can only overlap themselves or each other in one or two entries.

For $p = \underline{2120}$ and $q = \underline{2110}$, we define original and transient pattern occurrences analogously as before:

If $\r$ is the initial sequence and $\r'$ is the sequence resulting from applying the algorithm to $\r$, we say there is
\begin{itemize}
    \item an \textit{original} occurrence of $p$ at $i$ if $\r_i = \r_{i+2} >\r_{i+1}>\r_{i+3}$,
    \item an \textit{original} occurrence of $q$ at $i$ if $\r_i >\r_{i+1}= \r_{i+2} >\r_{i+3}$,
    \item  a \textit{transient} occurrence of $p$ at $i$ if $i$ is not an original occurrence of $p$, but $\r_i' = \r_{i+2}>\r_{i+1}>\r_{i+3}$, and
    \item a \textit{transient} occurrence of $q$ at $i$  if $i$ is not an original occurrence of $q$, but $\r_i' >\r_{i+1}= \r_{i+2} >\r_{i+3}$.
\end{itemize} 

Note: These are the same definitions we gave before, with all inequalities reversed.
The results shown for the case when $p = \underline{0102}$ and $q= \underline{0112}$ have analogs for $p =\underline{2120}$ and $q=\underline{2110}$, obtained by reversing the inequalities. So, Algorithm \ref{alg:sd} still works to show that $\underline{2120} \recip \underline{2110}$.

Lastly, we note that collecting all of these results, we have reciprocal relations among almost all pairs given in Theorem \ref{asupdated}, with the exceptions of 
 $\underline{2010} \sim \underline{2120}$, $\underline{1000} \sim \underline{1110}$, and $\underline{2100} \sim \underline{2210}$.
These cannot be reciprocal due to asymmetry in how the patterns can overlap:

For $\underline{2120}$ and $\underline{2010}$, the sequence $00212010$ has $\underline{2120}$ at position $3$ and $\underline{2010}$ at position $5$. However, there is no inversion sequence $\r$ with
$\underline{2010}$ at position $3$ and $\underline{2120}$ at position $5$.
Having $\underline{2120}$ at position $5$ would require $\r_5 > \r_6 > \r_8$, and also having $\underline{2010}$ at position $3$ would require $\r_3 > \r_5$, so combined, we have $\r_3 > \r_5 > \r_6 > \r_8$. Since the entries in inversion sequences are all nonnegative integers, it follows that $\r_3 \geq 3$. However, if $\r$ is an inversion sequence, we must have $\r_3 < 3$, so this is not possible. 

The sequence ${21110}$ has $\underline{1000}$ at position 1 and $\underline{1110}$ at position 2.
However, we cannot have a sequence with $\underline{1000}$ at position 2 and $\underline{1110}$ at position 1, since the $\underline{1110}$ at position 1 would require that the second and third entries of the sequence are equal, but having $\underline{1000}$ at position 2 would require that the second and third entries of the sequence are not equal.

Similarly, $22100$ has $\underline{2100}$ at position 2 and $ \underline{2210}$ at position 1. However, we cannot create a sequence with $\underline{2100}$ at position 1 and $ \underline{2210}$ at position 2 --- the first condition would require the second and third entries to be different, but the second condition requires them to be equal.

\newpage

\appendix 
\appendixtitleon
\begin{appendices}
\section{Extended example}\label{extendedexample}
 \begin{example}
$\r = 011203140516$ \\ 
We currently have $p$ at $\{ \}$; we want $q$ at $\{ \}$. \\ 
$\ph_{ = \{ \} } (011203140516)$: \\ 
\hphantom{...}$\mid$\,\,$\ph_{ \geq \{ \} } (011203140516) = 011203140516$ \\ 
\hphantom{...}$\mid$\,\,$q$ occurs at $\{ 1\}$, so apply $\ph\inv_{ = \{ 1\} }$ \\ 
\hphantom{...}$\mid$\,\,$\ph\inv_{ = \{ 1\} } (011203140516)$: \\ 
\hphantom{...}$\mid$\,\,\hphantom{...}$\mid$\,\,$\ph\inv_{ \geq \{ 1\} } (011203140516) = 010203140516$ \\ 
\hphantom{...}$\mid$\,\,\hphantom{...}$\mid$\,\,$p$ occurs at $\{ 1, 3\}$, so apply $\ph_{ = \{ 1, 3\} }$ \\ 
\hphantom{...}$\mid$\,\,\hphantom{...}$\mid$\,\,$\ph_{ = \{ 1, 3\} } (010203140516)$: \\ 
\hphantom{...}$\mid$\,\,\hphantom{...}$\mid$\,\,\hphantom{...}$\mid$\,\,$\ph_{ \geq \{ 1, 3\} } (010203140516) = 011223140516$ \\ 
\hphantom{...}$\mid$\,\,\hphantom{...}$\mid$\,\,\hphantom{...}$\mid$\,\,$q$ occurs at $\{ 1, 3\}$, so we are done. \\ 
\hphantom{...}$\mid$\,\,\hphantom{...}$\mid$\,\,$011223140516$ \\ 
\hphantom{...}$\mid$\,\,\hphantom{...}$\mid$\,\,$\ph\inv_{ \geq \{ 1\} } (011223140516) = 010223140516$ \\ 
\hphantom{...}$\mid$\,\,\hphantom{...}$\mid$\,\,$p$ occurs at $\{ 1\}$, so we are done. \\ 
\hphantom{...}$\mid$\,\,$010223140516$ \\ 
\hphantom{...}$\mid$\,\,$\ph_{ \geq \{ \} } (010223140516) = 010223140516$ \\ 
\hphantom{...}$\mid$\,\,$q$ occurs at $\{ 3\}$, so apply $\ph\inv_{ = \{ 3\} }$ \\ 
\hphantom{...}$\mid$\,\,$\ph\inv_{ = \{ 3\} } (010223140516)$: \\ 
\hphantom{...}$\mid$\,\,\hphantom{...}$\mid$\,\,$\ph\inv_{ \geq \{ 3\} } (010223140516) = 010203140516$ \\ 
\hphantom{...}$\mid$\,\,\hphantom{...}$\mid$\,\,$p$ occurs at $\{ 1, 3\}$, so apply $\ph_{ = \{ 1, 3\} }$ \\ 
\hphantom{...}$\mid$\,\,\hphantom{...}$\mid$\,\,$\ph_{ = \{ 1, 3\} } (010203140516)$: \\ 
\hphantom{...}$\mid$\,\,\hphantom{...}$\mid$\,\,\hphantom{...}$\mid$\,\,$\ph_{ \geq \{ 1, 3\} } (010203140516) = 011223140516$ \\ 
\hphantom{...}$\mid$\,\,\hphantom{...}$\mid$\,\,\hphantom{...}$\mid$\,\,$q$ occurs at $\{ 1, 3\}$, so we are done. \\ 
\hphantom{...}$\mid$\,\,\hphantom{...}$\mid$\,\,$011223140516$ \\ 
\hphantom{...}$\mid$\,\,\hphantom{...}$\mid$\,\,$\ph\inv_{ \geq \{ 3\} } (011223140516) = 011213140516$ \\ 
\hphantom{...}$\mid$\,\,\hphantom{...}$\mid$\,\,$p$ occurs at $\{ 3, 5\}$, so apply $\ph_{ = \{ 3, 5\} }$ \\ 
\hphantom{...}$\mid$\,\,\hphantom{...}$\mid$\,\,$\ph_{ = \{ 3, 5\} } (011213140516)$: \\ 
\hphantom{...}$\mid$\,\,\hphantom{...}$\mid$\,\,\hphantom{...}$\mid$\,\,$\ph_{ \geq \{ 3, 5\} } (011213140516) = 011223340516$ \\ 
\hphantom{...}$\mid$\,\,\hphantom{...}$\mid$\,\,\hphantom{...}$\mid$\,\,$q$ occurs at $\{ 1, 3, 5\}$, so apply $\ph\inv_{ = \{ 1, 3, 5\} }$ \\ 
\hphantom{...}$\mid$\,\,\hphantom{...}$\mid$\,\,\hphantom{...}$\mid$\,\,$\ph\inv_{ = \{ 1, 3, 5\} } (011223340516)$: \\ 
\hphantom{...}$\mid$\,\,\hphantom{...}$\mid$\,\,\hphantom{...}$\mid$\,\,\hphantom{...}$\mid$\,\,$\ph\inv_{ \geq \{ 1, 3, 5\} } (011223340516) = 010203040516$ \\ 
\hphantom{...}$\mid$\,\,\hphantom{...}$\mid$\,\,\hphantom{...}$\mid$\,\,\hphantom{...}$\mid$\,\,$p$ occurs at $\{ 1, 3, 5, 7\}$, so apply $\ph_{ = \{ 1, 3, 5, 7\} }$ \\ 
\hphantom{...}$\mid$\,\,\hphantom{...}$\mid$\,\,\hphantom{...}$\mid$\,\,\hphantom{...}$\mid$\,\,$\ph_{ = \{ 1, 3, 5, 7\} } (010203040516)$: \\ 
\hphantom{...}$\mid$\,\,\hphantom{...}$\mid$\,\,\hphantom{...}$\mid$\,\,\hphantom{...}$\mid$\,\,\hphantom{...}$\mid$\,\,$\ph_{ \geq \{ 1, 3, 5, 7\} } (010203040516) = 011223344516$ \\ 
\hphantom{...}$\mid$\,\,\hphantom{...}$\mid$\,\,\hphantom{...}$\mid$\,\,\hphantom{...}$\mid$\,\,\hphantom{...}$\mid$\,\,$q$ occurs at $\{ 1, 3, 5, 7\}$, so we are done. \\ 
\hphantom{...}$\mid$\,\,\hphantom{...}$\mid$\,\,\hphantom{...}$\mid$\,\,\hphantom{...}$\mid$\,\,$011223344516$ \\ 
\hphantom{...}$\mid$\,\,\hphantom{...}$\mid$\,\,\hphantom{...}$\mid$\,\,\hphantom{...}$\mid$\,\,$\ph\inv_{ \geq \{ 1, 3, 5\} } (011223344516) = 010203044516$ \\ 
\hphantom{...}$\mid$\,\,\hphantom{...}$\mid$\,\,\hphantom{...}$\mid$\,\,\hphantom{...}$\mid$\,\,$p$ occurs at $\{ 1, 3, 5\}$, so we are done. \\ 
\hphantom{...}$\mid$\,\,\hphantom{...}$\mid$\,\,\hphantom{...}$\mid$\,\,$010203044516$ \\ 
\hphantom{...}$\mid$\,\,\hphantom{...}$\mid$\,\,\hphantom{...}$\mid$\,\,$\ph_{ \geq \{ 3, 5\} } (010203044516) = 010223344516$ \\ 
\hphantom{...}$\mid$\,\,\hphantom{...}$\mid$\,\,\hphantom{...}$\mid$\,\,$q$ occurs at $\{ 3, 5, 7\}$, so apply $\ph\inv_{ = \{ 3, 5, 7\} }$ \\ 
\hphantom{...}$\mid$\,\,\hphantom{...}$\mid$\,\,\hphantom{...}$\mid$\,\,$\ph\inv_{ = \{ 3, 5, 7\} } (010223344516)$: \\ 
\hphantom{...}$\mid$\,\,\hphantom{...}$\mid$\,\,\hphantom{...}$\mid$\,\,\hphantom{...}$\mid$\,\,$\ph\inv_{ \geq \{ 3, 5, 7\} } (010223344516) = 010203040516$ \\ 
\hphantom{...}$\mid$\,\,\hphantom{...}$\mid$\,\,\hphantom{...}$\mid$\,\,\hphantom{...}$\mid$\,\,$p$ occurs at $\{ 1, 3, 5, 7\}$, so apply $\ph_{ = \{ 1, 3, 5, 7\} }$ \\ 
\hphantom{...}$\mid$\,\,\hphantom{...}$\mid$\,\,\hphantom{...}$\mid$\,\,\hphantom{...}$\mid$\,\,$\ph_{ = \{ 1, 3, 5, 7\} } (010203040516)$: \\ 
\hphantom{...}$\mid$\,\,\hphantom{...}$\mid$\,\,\hphantom{...}$\mid$\,\,\hphantom{...}$\mid$\,\,\hphantom{...}$\mid$\,\,$\ph_{ \geq \{ 1, 3, 5, 7\} } (010203040516) = 011223344516$ \\ 
\hphantom{...}$\mid$\,\,\hphantom{...}$\mid$\,\,\hphantom{...}$\mid$\,\,\hphantom{...}$\mid$\,\,\hphantom{...}$\mid$\,\,$q$ occurs at $\{ 1, 3, 5, 7\}$, so we are done. \\ 
\hphantom{...}$\mid$\,\,\hphantom{...}$\mid$\,\,\hphantom{...}$\mid$\,\,\hphantom{...}$\mid$\,\,$011223344516$ \\ 
\hphantom{...}$\mid$\,\,\hphantom{...}$\mid$\,\,\hphantom{...}$\mid$\,\,\hphantom{...}$\mid$\,\,$\ph\inv_{ \geq \{ 3, 5, 7\} } (011223344516) = 011213141516$ \\ 
\hphantom{...}$\mid$\,\,\hphantom{...}$\mid$\,\,\hphantom{...}$\mid$\,\,\hphantom{...}$\mid$\,\,$p$ occurs at $\{ 3, 5, 7, 9\}$, so apply $\ph_{ = \{ 3, 5, 7, 9\} }$ \\ 
\hphantom{...}$\mid$\,\,\hphantom{...}$\mid$\,\,\hphantom{...}$\mid$\,\,\hphantom{...}$\mid$\,\,$\ph_{ = \{ 3, 5, 7, 9\} } (011213141516)$: \\ 
\hphantom{...}$\mid$\,\,\hphantom{...}$\mid$\,\,\hphantom{...}$\mid$\,\,\hphantom{...}$\mid$\,\,\hphantom{...}$\mid$\,\,$\ph_{ \geq \{ 3, 5, 7, 9\} } (011213141516) = 011223344556$ \\ 
\hphantom{...}$\mid$\,\,\hphantom{...}$\mid$\,\,\hphantom{...}$\mid$\,\,\hphantom{...}$\mid$\,\,\hphantom{...}$\mid$\,\,$q$ occurs at $\{ 1, 3, 5, 7, 9\}$, so apply $\ph\inv_{ = \{ 1, 3, 5, 7, 9\} }$ \\ 
\hphantom{...}$\mid$\,\,\hphantom{...}$\mid$\,\,\hphantom{...}$\mid$\,\,\hphantom{...}$\mid$\,\,\hphantom{...}$\mid$\,\,$\ph\inv_{ = \{ 1, 3, 5, 7, 9\} } (011223344556)$: \\ 
\hphantom{...}$\mid$\,\,\hphantom{...}$\mid$\,\,\hphantom{...}$\mid$\,\,\hphantom{...}$\mid$\,\,\hphantom{...}$\mid$\,\,\hphantom{...}$\mid$\,\,$\ph\inv_{ \geq \{ 1, 3, 5, 7, 9\} } (011223344556) = 010203040506$ \\ 
\hphantom{...}$\mid$\,\,\hphantom{...}$\mid$\,\,\hphantom{...}$\mid$\,\,\hphantom{...}$\mid$\,\,\hphantom{...}$\mid$\,\,\hphantom{...}$\mid$\,\,$p$ occurs at $\{ 1, 3, 5, 7, 9\}$, so we are done. \\ 
\hphantom{...}$\mid$\,\,\hphantom{...}$\mid$\,\,\hphantom{...}$\mid$\,\,\hphantom{...}$\mid$\,\,\hphantom{...}$\mid$\,\,$010203040506$ \\ 
\hphantom{...}$\mid$\,\,\hphantom{...}$\mid$\,\,\hphantom{...}$\mid$\,\,\hphantom{...}$\mid$\,\,\hphantom{...}$\mid$\,\,$\ph_{ \geq \{ 3, 5, 7, 9\} } (010203040506) = 010223344556$ \\ 
\hphantom{...}$\mid$\,\,\hphantom{...}$\mid$\,\,\hphantom{...}$\mid$\,\,\hphantom{...}$\mid$\,\,\hphantom{...}$\mid$\,\,$q$ occurs at $\{ 3, 5, 7, 9\}$, so we are done. \\ 
\hphantom{...}$\mid$\,\,\hphantom{...}$\mid$\,\,\hphantom{...}$\mid$\,\,\hphantom{...}$\mid$\,\,$010223344556$ \\ 
\hphantom{...}$\mid$\,\,\hphantom{...}$\mid$\,\,\hphantom{...}$\mid$\,\,\hphantom{...}$\mid$\,\,$\ph\inv_{ \geq \{ 3, 5, 7\} } (010223344556) = 010203040556$ \\ 
\hphantom{...}$\mid$\,\,\hphantom{...}$\mid$\,\,\hphantom{...}$\mid$\,\,\hphantom{...}$\mid$\,\,$p$ occurs at $\{ 1, 3, 5, 7\}$, so apply $\ph_{ = \{ 1, 3, 5, 7\} }$ \\ 
\hphantom{...}$\mid$\,\,\hphantom{...}$\mid$\,\,\hphantom{...}$\mid$\,\,\hphantom{...}$\mid$\,\,$\ph_{ = \{ 1, 3, 5, 7\} } (010203040556)$: \\ 
\hphantom{...}$\mid$\,\,\hphantom{...}$\mid$\,\,\hphantom{...}$\mid$\,\,\hphantom{...}$\mid$\,\,\hphantom{...}$\mid$\,\,$\ph_{ \geq \{ 1, 3, 5, 7\} } (010203040556) = 011223344556$ \\ 
\hphantom{...}$\mid$\,\,\hphantom{...}$\mid$\,\,\hphantom{...}$\mid$\,\,\hphantom{...}$\mid$\,\,\hphantom{...}$\mid$\,\,$q$ occurs at $\{ 1, 3, 5, 7, 9\}$, so apply $\ph\inv_{ = \{ 1, 3, 5, 7, 9\} }$ \\ 
\hphantom{...}$\mid$\,\,\hphantom{...}$\mid$\,\,\hphantom{...}$\mid$\,\,\hphantom{...}$\mid$\,\,\hphantom{...}$\mid$\,\,$\ph\inv_{ = \{ 1, 3, 5, 7, 9\} } (011223344556)$: \\ 
\hphantom{...}$\mid$\,\,\hphantom{...}$\mid$\,\,\hphantom{...}$\mid$\,\,\hphantom{...}$\mid$\,\,\hphantom{...}$\mid$\,\,\hphantom{...}$\mid$\,\,$\ph\inv_{ \geq \{ 1, 3, 5, 7, 9\} } (011223344556) = 010203040506$ \\ 
\hphantom{...}$\mid$\,\,\hphantom{...}$\mid$\,\,\hphantom{...}$\mid$\,\,\hphantom{...}$\mid$\,\,\hphantom{...}$\mid$\,\,\hphantom{...}$\mid$\,\,$p$ occurs at $\{ 1, 3, 5, 7, 9\}$, so we are done. \\ 
\hphantom{...}$\mid$\,\,\hphantom{...}$\mid$\,\,\hphantom{...}$\mid$\,\,\hphantom{...}$\mid$\,\,\hphantom{...}$\mid$\,\,$010203040506$ \\ 
\hphantom{...}$\mid$\,\,\hphantom{...}$\mid$\,\,\hphantom{...}$\mid$\,\,\hphantom{...}$\mid$\,\,\hphantom{...}$\mid$\,\,$\ph_{ \geq \{ 1, 3, 5, 7\} } (010203040506) = 011223344506$ \\ 
\hphantom{...}$\mid$\,\,\hphantom{...}$\mid$\,\,\hphantom{...}$\mid$\,\,\hphantom{...}$\mid$\,\,\hphantom{...}$\mid$\,\,$q$ occurs at $\{ 1, 3, 5, 7\}$, so we are done. \\ 
\hphantom{...}$\mid$\,\,\hphantom{...}$\mid$\,\,\hphantom{...}$\mid$\,\,\hphantom{...}$\mid$\,\,$011223344506$ \\ 
\hphantom{...}$\mid$\,\,\hphantom{...}$\mid$\,\,\hphantom{...}$\mid$\,\,\hphantom{...}$\mid$\,\,$\ph\inv_{ \geq \{ 3, 5, 7\} } (011223344506) = 011213141506$ \\ 
\hphantom{...}$\mid$\,\,\hphantom{...}$\mid$\,\,\hphantom{...}$\mid$\,\,\hphantom{...}$\mid$\,\,$p$ occurs at $\{ 3, 5, 7\}$, so we are done. \\ 
\hphantom{...}$\mid$\,\,\hphantom{...}$\mid$\,\,\hphantom{...}$\mid$\,\,$011213141506$ \\ 
\hphantom{...}$\mid$\,\,\hphantom{...}$\mid$\,\,\hphantom{...}$\mid$\,\,$\ph_{ \geq \{ 3, 5\} } (011213141506) = 011223341506$ \\ 
\hphantom{...}$\mid$\,\,\hphantom{...}$\mid$\,\,\hphantom{...}$\mid$\,\,$q$ occurs at $\{ 1, 3, 5\}$, so apply $\ph\inv_{ = \{ 1, 3, 5\} }$ \\ 
\hphantom{...}$\mid$\,\,\hphantom{...}$\mid$\,\,\hphantom{...}$\mid$\,\,$\ph\inv_{ = \{ 1, 3, 5\} } (011223341506)$: \\ 
\hphantom{...}$\mid$\,\,\hphantom{...}$\mid$\,\,\hphantom{...}$\mid$\,\,\hphantom{...}$\mid$\,\,$\ph\inv_{ \geq \{ 1, 3, 5\} } (011223341506) = 010203041506$ \\ 
\hphantom{...}$\mid$\,\,\hphantom{...}$\mid$\,\,\hphantom{...}$\mid$\,\,\hphantom{...}$\mid$\,\,$p$ occurs at $\{ 1, 3, 5\}$, so we are done. \\ 
\hphantom{...}$\mid$\,\,\hphantom{...}$\mid$\,\,\hphantom{...}$\mid$\,\,$010203041506$ \\ 
\hphantom{...}$\mid$\,\,\hphantom{...}$\mid$\,\,\hphantom{...}$\mid$\,\,$\ph_{ \geq \{ 3, 5\} } (010203041506) = 010223341506$ \\ 
\hphantom{...}$\mid$\,\,\hphantom{...}$\mid$\,\,\hphantom{...}$\mid$\,\,$q$ occurs at $\{ 3, 5\}$, so we are done. \\ 
\hphantom{...}$\mid$\,\,\hphantom{...}$\mid$\,\,$010223341506$ \\ 
\hphantom{...}$\mid$\,\,\hphantom{...}$\mid$\,\,$\ph\inv_{ \geq \{ 3\} } (010223341506) = 010203341506$ \\ 
\hphantom{...}$\mid$\,\,\hphantom{...}$\mid$\,\,$p$ occurs at $\{ 1, 3\}$, so apply $\ph_{ = \{ 1, 3\} }$ \\ 
\hphantom{...}$\mid$\,\,\hphantom{...}$\mid$\,\,$\ph_{ = \{ 1, 3\} } (010203341506)$: \\ 
\hphantom{...}$\mid$\,\,\hphantom{...}$\mid$\,\,\hphantom{...}$\mid$\,\,$\ph_{ \geq \{ 1, 3\} } (010203341506) = 011223341506$ \\ 
\hphantom{...}$\mid$\,\,\hphantom{...}$\mid$\,\,\hphantom{...}$\mid$\,\,$q$ occurs at $\{ 1, 3, 5\}$, so apply $\ph\inv_{ = \{ 1, 3, 5\} }$ \\ 
\hphantom{...}$\mid$\,\,\hphantom{...}$\mid$\,\,\hphantom{...}$\mid$\,\,$\ph\inv_{ = \{ 1, 3, 5\} } (011223341506)$: \\ 
\hphantom{...}$\mid$\,\,\hphantom{...}$\mid$\,\,\hphantom{...}$\mid$\,\,\hphantom{...}$\mid$\,\,$\ph\inv_{ \geq \{ 1, 3, 5\} } (011223341506) = 010203041506$ \\ 
\hphantom{...}$\mid$\,\,\hphantom{...}$\mid$\,\,\hphantom{...}$\mid$\,\,\hphantom{...}$\mid$\,\,$p$ occurs at $\{ 1, 3, 5\}$, so we are done. \\ 
\hphantom{...}$\mid$\,\,\hphantom{...}$\mid$\,\,\hphantom{...}$\mid$\,\,$010203041506$ \\ 
\hphantom{...}$\mid$\,\,\hphantom{...}$\mid$\,\,\hphantom{...}$\mid$\,\,$\ph_{ \geq \{ 1, 3\} } (010203041506) = 011223041506$ \\ 
\hphantom{...}$\mid$\,\,\hphantom{...}$\mid$\,\,\hphantom{...}$\mid$\,\,$q$ occurs at $\{ 1, 3\}$, so we are done. \\ 
\hphantom{...}$\mid$\,\,\hphantom{...}$\mid$\,\,$011223041506$ \\ 
\hphantom{...}$\mid$\,\,\hphantom{...}$\mid$\,\,$\ph\inv_{ \geq \{ 3\} } (011223041506) = 011213041506$ \\ 
\hphantom{...}$\mid$\,\,\hphantom{...}$\mid$\,\,$p$ occurs at $\{ 3\}$, so we are done. \\ 
\hphantom{...}$\mid$\,\,$011213041506$ \\ 
\hphantom{...}$\mid$\,\,$\ph_{ \geq \{ \} } (011213041506) = 011213041506$ \\ 
\hphantom{...}$\mid$\,\,$q$ occurs at $\{ 1\}$, so apply $\ph\inv_{ = \{ 1\} }$ \\ 
\hphantom{...}$\mid$\,\,$\ph\inv_{ = \{ 1\} } (011213041506)$: \\ 
\hphantom{...}$\mid$\,\,\hphantom{...}$\mid$\,\,$\ph\inv_{ \geq \{ 1\} } (011213041506) = 010213041506$ \\ 
\hphantom{...}$\mid$\,\,\hphantom{...}$\mid$\,\,$p$ occurs at $\{ 1\}$, so we are done. \\ 
\hphantom{...}$\mid$\,\,$010213041506$ \\ 
\hphantom{...}$\mid$\,\,$\ph_{ \geq \{ \} } (010213041506) = 010213041506$ \\ 
\hphantom{...}$\mid$\,\,$q$ occurs at $\{ \}$, so we are done. \\ 
$010213041506$ \\ 
We finally obtain $011203140516 \mapsto 010213041506$.  \\ 
 \end{example}

\end{appendices}

\newpage
\printbibliography

\end{document}